\numberwithin{equation}{section}
\newcommand \nc{\newcommand}
\newtheorem{theorem}{Theorem}[section]
\newtheorem{lemma}[theorem]{Lemma}
\newtheorem{proposition}[theorem]{Proposition}
\newtheorem{corollary}[theorem]{Corollary}
\newtheorem{definition}[theorem]{Definition}
\newtheorem{remark}[theorem]{Remark}
\date{}
\nc{\R}{\mathbb{R}} \nc{\va}{\varepsilon} \nc{\ls}{\limits}
\def\D{\Delta}
\def\di{\mathrm{div\,}}
\newcommand \pp {\partial}
\newcommand{\dd}{\,\mathrm{d}}
\newcommand{\vphi}{\varphi}
\def\e{\varepsilon}
\begin{document}
\title{VANISHING ANGULAR VISCOSITY LIMIT FOR MICROPOLAR FLUID MODEL IN $\mathbb{R}_+^2$: BOUNDARY LAYER AND OPTIMAL CONVERGENCE RATE 
}
\author{ Yinghui Wang\orcidlink{0000-0002-7565-5525}\footnote{Corresponding author. MOE-LCSM, School of Mathematics and Statistics, Hunan Normal University, Changsha 410081, China. E-mail: yhwangmath@hunnu.edu.cn, yhwangmath@163.com.},\, Weihao Zhang\orcidlink{0000-0002-1361-2108}
\footnote{Zhuhai NO.1 High School, Zhuhai 519000, China. E-mail:weihao-zhang@qq.com.}
}

\maketitle

\begin{abstract}
We consider the initial-boundary value problem for the incompressible two-dimensional micropolar fluid model with angular viscosity in the  upper half-plane. This model  describes the motion of viscous fluids with microstructure.
 The global well-posedness of strong solutions for this problem with positive angular viscosity can be established via the standard energy method, as presented in the classical monograph [\L kaszewicz, {\it Micropolar fluids: Theory and applications.} Birkh\"auser, 1999]. Corresponding results for the zero angular viscosity case were established recently in [Liu, Wang, {\it Commun. Math. Sci.} 16 (2018), no. 8, 2147–2165]. However, the link between the positive angular viscosity model  (the full diffusive system) and the  zero angular viscosity model (the partially diffusive system) via the vanishing diffusion limit remains unknown. 
In this work,  we first construct Prandtl-type boundary layer profiles. We then provide a rigorous justification for the vanishing angular viscosity limit of global strong solutions, without imposing smallness assumptions on the initial data. Our analysis reveals the emergence of a strong boundary layer in the angular velocity field (micro-rotation velocity  of the fluid particles) during this vanishing viscosity process.
Moreover, we also obtain the optimal $L^\infty$ convergence rate as the angular viscosity tends to zero. Our approach combines anisotropic Sobolev spaces with careful energy estimates to address the nonlinear interaction between the velocity and angular velocity fields.

\end{abstract}

\bigbreak \textbf{{\bf Key Words}:}  Micropolar equations; Vanishing angular viscosity limit; Boundary layers.

\bigbreak  {\textbf{AMS Subject Classification 2020:}  35Q35; 76A05; 76D10; 76M45}

\section{Introduction} 
Micropolar fluid theory was introduced by Eringen (\cite{Eringen_1966,Eringen_1969}) in the 1960s to model complex fluids where the microstructure and intrinsic particle rotation significantly influence mechanical behavior. Unlike the classical Newtonian fluids,  the micropolar fluid model incorporates an angular velocity field $w$ to introduce additional rotational degrees of freedom beyond the standard translational motion. The framework describes diverse systems including: suspensions of randomly oriented particles, liquid crystals, polymeric fluids, and blood flow (capturing red blood cell rotation). It applies particularly to scenarios where micro-scale rotational inertia affects macroscopic behavior, such as small-scale flows or high-concentration suspensions.   For comprehensive applications, see Maugin \cite{Maugin_2017} and the reference therein for detailed discussion. The three-dimensional micropolar equations are given by: 
\begin{equation} \label{micropolar_equtions}
\begin{cases}  
\pp_t  u+\left(u\cdot\nabla \right)u+\nabla p-\left(\mu+\zeta \right)\D u=2\zeta\nabla\times w,\\
\pp_t  w +\left(u\cdot\nabla \right)w+4\zeta w-\nu\D w-\left( \nu+\lambda\right)\nabla\di w=2\zeta\nabla\times u,\\
\di u=0,
\end{cases}  
\end{equation} 
where $u=u(x,y,z,t)$ denotes the fluid velocity, $p(x,y,z,t)$ the pressure, $w=(x,y,z,t)$ the micro-rotation field (representing the angular velocity of the rotation of the fluid particles). The parameter $\mu\ge 0$ represents the Newtonian kinematic viscosity, $\zeta>0$ the micro-rotation viscosity, $\nu, \lambda\ge 0 $ the angular viscosity. 
Critically, the coupling between velocity and micro-rotation in micropolar fluids requires $\zeta>0$. When $\zeta=0$, the fields decouple and micro-rotation  ceases to influence the flow.

To reduce computational complexity while preserving  essential physics of micropolar fluid, researchers commonly adopt a 2D simplification of \eqref{micropolar_equtions}. Specifically, assuming that  $$u=\left(u_1(x,y,t),u_2(x,y,t),0\right),~ p=p(x,y,t),~w=\left(0,0,w(x,y,t)\right),$$ 
then, $\eqref{micropolar_equtions}$ reduces to 
\begin{equation} \label{u_mu}
\begin{cases}  
\pp_t  u+\left(u\cdot\nabla \right)u+\nabla p-\left(\mu+\zeta \right)\D u=-2\zeta\nabla^\perp w,\\
\pp_t  w +\left(u\cdot\nabla \right)w+4\zeta w-\nu\D w=2\zeta\nabla^\perp\cdot u,\\
\di u=0,
\end{cases}  
\end{equation}  
where  $u(x,y,t)$ is a 2D vector-valued function, $w(x,y,t)$ is a scalar function and  $\nabla^\perp:=(-\partial_y,\partial_x)^\top$.

Let us give a brief overview of some relevant works on the micropolar fluids. 
For the full dissipative case (i.e. $\nu>0$ in \eqref{micropolar_equtions}), in seminal works by Galdi and Rionero \cite{Galdi_Rionero_1977}, followed by \L ukaszewicz \cite{Lukaszewicz_1988}, the existence of weak solutions to system \eqref{micropolar_equtions} was established. Subsequently, in his monograph \cite{Lukaszewicz_1999}, \L ukaszewicz investigated the well-posedness of both weak and strong solutions for \eqref{micropolar_equtions} and its stationary counterpart. Moreover, \L ukaszewicz's monograph \cite{Lukaszewicz_1999} also surveys key mathematical advances in micropolar fluids throughout the 20th century.  
See also the works by Chen and Price \cite{Chen_Price_2006}, Dong and Chen \cite{Dong_Chen_2009} for the large time behaviors of the strong solutions of \eqref{micropolar_equtions} and  \eqref{uepsilon}, respectively. Recently, Chu and Xiao \cite{Chu_Xiao_2023_3D} studied the vanishing dissipation limits ($\mu,\zeta,\nu,\lambda\to 0$) of \eqref{micropolar_equtions} under slip boundary conditions over 3D bounded domains. Notably, the slip conditions and decoupled limit system enabled
justification of the vanishing dissipation limit via standard energy methods. However, justification of  vanishing dissipation limits for  Dirichlet initial boundary value problem remains open. Extensive research has also been conducted on the mathematical theory of compressible micropolar model and magneto-micropolar models; one can refer to recent works \cite{Cruz_et_al_2022,Feng_Hong_Zhu_2024,Qian_He_Zhang_2024,Tong_Pan_Tan_2021} and reference therein for details. 

Significant research also exists for the 2D model \eqref{u_mu}. The global existence of strong solutions was established by \L ukaszewicz \cite{Lukaszewicz_1999}.
For the vanishing angular viscosity case (i.e. $\nu= 0$ in \eqref{u_mu}), Dong and Zhang \cite{Dong_Zhang_2010} proved global well-posedness of strong solutions to the Cauchy problem of \eqref{u_mu}. Later, Liu and Wang \cite{Liu_Wang_2018} extended this result to Dirichlet initial-boundary value problems.
Chen, Xu and Zhang \cite{Chen_Xu_Zhang_2014} studied the simultaneous vanishing limit ($\nu=\zeta \to 0$) for weak solutions, demonstrating potential boundary effects when $\displaystyle \lim_{\nu\to 0} \zeta/\nu^{1/2} < \infty$.
Recently, Chu and Xiao \cite{Chu_Xiao_2023_2D} studied the vanishing micro-rotation and angular viscosity limit ($\nu=\zeta \to 0$) for \eqref{u_mu} under slip boundary conditions. Notice that  setting the micro-rotation viscosity $\zeta = 0$ in \eqref{u_mu} decouples equations \eqref{u_mu}$_1$ and \eqref{u_mu}$_2$, reducing the $u$-equation to Navier-Stokes equations. This indicates weakened $u$-$w$ interaction and loss of micro-rotational characteristics. Exploiting this decoupling, Chu and Xiao \cite{Chu_Xiao_2023_2D} showed no strong boundary layer effects emerge during the limit process, and the justification of the limit process can be established via standard energy method. A natural problem is that 
when preserving micro-rotational features ($\zeta>0$ fixed), under Dirichlet boundary conditions\footnote{In studies related to micropolar fluids, the Dirichlet boundary condition (i.e. the no-slip boundary condition) is more prevalent than the slip boundary condition, see \cite{Liu_Wang_2018,Lukaszewicz_1999} and the reference therein for details.}, can we rigorously justify the vanishing angular viscosity limit ($\nu\to 0$)? Does a strong boundary layer occur? Is there something different comparing to   \cite{Chu_Xiao_2023_2D}? The aim of this paper is to solve those problems. 

In another hand, for the inviscid case (omitting the viscous term $-(\mu+\zeta)\Delta u$ in \eqref{u_mu}), 
 the global well-posedness of strong solutions for  Cauchy problem  was established by Dong, Li and Wu \cite{Dong_Li_Wu_2017}. Simultaneously, Jiu, Liu, Wu and Yu \cite{Jiu_Liu_Wu_Yu_2017} proved analogous results for initial-boundary value problems with  boundary conditions: $u\cdot n |_{\partial \Omega} = w |_{\partial \Omega} = 0$.

While the well-posedness of the strong solutions for initial-boundary value problem of \eqref{u_mu}
 has been established for both positive angular viscosity \cite{Lukaszewicz_1999} and zero angular viscosity  \cite{Liu_Wang_2018}. The connection  between the diffusive ($\nu>0$) and the  non-diffusive  ($\nu=0$) models
remains unknown due to boundary layer effects. In this paper, we will provide a rigorous justification for the vanishing angular viscosity limit proocess.

\subsection{Reformulation of the problem}
We begin by setting $\mu = \nu =: \varepsilon$ in \eqref{u_mu} without loss of generality. Moreover, to minimize mathematical complexity, we consider \eqref{u_mu} on the upper half-plane $\mathbb{R}_{+}^2:=\{(x,y)\in\mathbb{R}^2|y>0\}$\footnote{Actually,  one can handle the case of  bounded smooth domains via  the boundary flattening technique (\cite{Evans_2010}).}. The system is then reformulated as follows:
\begin{equation} \label{uepsilon}
\begin{cases}  
\pp_t  u+\left(u\cdot\nabla \right)u+\nabla p-\left(\varepsilon+\zeta \right)\D u=-2\zeta\nabla^\perp w,\\
\pp_t  w +\left(u\cdot\nabla \right)w+4\zeta w-\varepsilon\D w=2\zeta\nabla^\perp\cdot u,\\
\di u=0. 
\end{cases}  
\end{equation}  
We equip \eqref{uepsilon} with the initial-boundary value conditions:
\begin{equation}\label{uepsilon_ini_bou_cond}
    \left( u,w\right)(x,y,0)=\left( u_0,w_0\right),~~\left( u,w\right)(x,0,t)=0.
\end{equation}
Formally, letting $\varepsilon\to 0$ in \eqref{uepsilon}, one can obtain the following zero angular viscosity model, 
\begin{equation} \label{I0}
\begin{cases}  
\pp_t  u^{I,0}+\left(u^{I,0}\cdot\nabla \right)u^{I,0}+\nabla p^{I,0}-\zeta\D u^{I,0}=-2\zeta\nabla^\perp w^{I,0},\\
\pp_t  w^{I,0} +\left(u^{I,0}\cdot\nabla \right)w^{I,0}+4\zeta w=2\zeta\nabla^\perp\cdot u^{I,0},\\
\di u^{I,0}=0, 
\end{cases}  
\end{equation}  
equipped with the initial-boundary value conditions:
\begin{equation}\label{I0_cond}
    \left( u^{I,0},w^{I,0}\right)(x,y,0)=\left( u_0 ,w_0 \right),~~u^{I,0}(x,0,t)=0.
\end{equation}
Crucially, a mismatch exists between the boundary conditions for $w$ in \eqref{uepsilon_ini_bou_cond} and $w^{I,0}$ in \eqref{I0_cond} at $\{y=0\}$. After a careful analysis via the asymptotic matching method (see Appendix \ref{Appendix A} for details), we can
find a boundary layer profile $w^{b,0}(x,\frac{y}{\sqrt{\varepsilon}},t)$ such that the solution to problem \eqref{uepsilon}-\eqref{uepsilon_ini_bou_cond} admit the asymptotic representation:
\begin{gather}\label{eq_Prandtl_exp}
    u(x,y,t) = u^{I,0}(x,y,t) + O(\varepsilon^{\frac{1}{2}}),~~~w(x,y,t) = w^{I,0}(x,y,t) +w^{b,0}(x,\frac{y}{\sqrt{\varepsilon}},t) + O(\varepsilon^{\frac{1}{2}}),
\end{gather}
where $O(\varepsilon^{\frac{1}{2}})\to 0$ in $L^\infty$-norm as $\varepsilon\to 0$. In the rest of the present paper, we will validate the boundary layer expansion \eqref{eq_Prandtl_exp} and  rigorously justify the  vanishing angular viscosity limit for the initial-boundary value problem \eqref{uepsilon}-\eqref{uepsilon_ini_bou_cond}.  
\subsection*{Notation} 
We introduce the following notation conventions.  Let $C$ denote a generic positive constant depending on the initial data and fixed parameters, but independent of the variable parameter $\varepsilon$. When emphasizing dependence, we use subscripts such as $C_\zeta$. Some other notations are defined as follows:
	\begin{itemize}
		\item $A \lesssim B\iff A\le CB.$
		\item For a scalar-valued function $w$, two vector-valued functions $u$ and $v$, we use the following notations:
		\begin{align*}
          \nabla^\perp w := (-\partial_y w,\partial_x w)^\top,~
		(\nabla w)_i:=\pp_iw,~(\nabla u)_{ij}:=\pp_ju_i,~\nabla^\perp\cdot u := \partial_xu_2-\partial_y u_1,~
		(u\otimes v)_{ij}:=u_iv_j.
		\end{align*}
		\item $\langle \cdot\rangle :=\sqrt{1+|\cdot|^2}.$
		\item $L^p_{xy}$ and $H^s_{xy}$ denote the usual Lebesuge and Sobolev space over $\mathbb{R}^2_+$  with corresponding norms $\| \cdot \|_{L^p_{xy}}$
		and $\| \cdot \|_{H^s_{xy}}$, respectively.
  \item The notation $\langle \cdot,\cdot\rangle$ means the $L^2$ inner product over $\mathbb{R}^2_+:=
		\left\{ (x,y)| (x,y) \in \mathbb{R} \times \mathbb{R}_+\right\}$.
		\item The anisotropic Sobolev space is denoted as
		\begin{align*}
		H^m_x H^\ell_y :=
		\left\{
		f \in L^2(\mathbb{R}^2_+)
		\big|\sum_{
			0\leq i\leq m,0\leq j\leq \ell}
		\|\pp_x^i\pp_y^jf(x,y)\|_{L^2_{xy}} < \infty
		\right\},
		\end{align*}
		with norm $\| \cdot \|_{H^m_
			x H^\ell_y}$.
		\item $\overline{ f}:=f(x,0,t).$
		\item  $z:=\frac{y}{\sqrt{\e}}$ for $\e>0$. The notations $L^p_{xz}, H^s_{xz}$ and $H^m_xH^\ell_z$ denote that their components are functions of $(x,z)$.
		\item $\|(u,v)\|^2_X:=\|u\|^2_X+\|v\|^2_X$ for Banach space $X$. The norm of $L^q(0,T;X) (1\le p\le\infty)$ is denoted by $\|\cdot\|_{L^q_TX}$.
		\item  Let $\vphi$ be a smooth non-negative function defined on $[0,+\infty)$ satisfying
		\begin{align}\label{cut-off function}
		\vphi(0)=1, ~\vphi^\prime(0)=0, ~\vphi(z)=0 \text{ ~~for~~} z>1.
		\end{align}
	\end{itemize}

\subsection{Main result}
We begin with the global well-posedness of problem  \eqref{I0}-\eqref{I0_cond}.
\begin{proposition}\label{prop1}
    Assume that $\left( u_0,w_0\right) \in H_{xy}^{18}$ with $\di u_0  = 0$ and that the compatibility conditions
\begin{align}
    \pp_t^i u^{I,0}(0)\Big|_{y=0}=0,~~0\le i\le 8,
\end{align}
hold, where $ \pp_t^i u^{I,0}(0)$ is the i-th time derivative of $u^{I,0}$
at $\{ t = 0\}$ which can be connected
to the initial value $( u_0,w_0)$  by means of the system $\eqref{I0}$. Then, for given $T>0$, problem  \eqref{I0}-\eqref{I0_cond}  has a unique solution $\left( u^{I,0},w^{I,0}\right)$ over $[0,T]$ satisfying $\di u^{I,0}=0$ and
\begin{gather*}
     \pp_t^\ell u^{I,0}\in L^\infty(0,T;H^{18-2\ell}_{xy})\cap L^2(0,T;H^{19-2\ell}_{xy}),~~\ell=0,1,\cdots,9,\\
     w^{I,0}\in L^\infty(0,T;H^{18}_{xy}),~\pp_t^jw^{I,0}\in L^\infty(0,T;H^{19-2j}_{xy}),~~j=1,2,\cdots,9.
\end{gather*}
\end{proposition}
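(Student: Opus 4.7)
The plan is to first construct local-in-time solutions by a standard linearization-contraction scheme and then extend globally by uniform energy bounds on any finite interval. For fixed $\zeta>0$, system \eqref{I0} couples a 2D Navier--Stokes-type equation for $u^{I,0}$ with source $-2\zeta\nabla^\perp w^{I,0}$ to a linear transport equation for $w^{I,0}$ with damping $4\zeta w^{I,0}$ and source $2\zeta\nabla^\perp\cdot u^{I,0}$. The boundary condition $u^{I,0}|_{y=0}=0$ together with $\di u^{I,0}=0$ forces $u_2^{I,0}|_{y=0}=0$, so the characteristics of the transport operator do not enter $\mathbb{R}^2_+$ from $\{y=0\}$ and no boundary condition for $w^{I,0}$ is required.

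For local existence I would iterate
\begin{equation*}
\pp_t u^{n+1}+(u^n\cdot\nabla)u^{n+1}+\nabla p^{n+1}-\zeta\D u^{n+1}=-2\zeta\nabla^\perp w^n,\quad \di u^{n+1}=0,\quad u^{n+1}|_{y=0}=0,
\end{equation*}
\begin{equation*}
\pp_t w^{n+1}+(u^n\cdot\nabla)w^{n+1}+4\zeta w^{n+1}=2\zeta\nabla^\perp\cdot u^{n+1},
\end{equation*}
starting from the given $(u_0,w_0)$. Each step is solvable: the first is a Stokes system with a lower-order convective term handled by maximal parabolic regularity, and the second is a linear transport equation integrated along the characteristics of $u^n$. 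A contraction argument in a suitable Sobolev-in-space, $L^2$-in-time norm yields a local solution on some $[0,T_0]$. The compatibility conditions on $u^{I,0}$ are exactly what is needed so that all traces $\pp_t^\ell u^{I,0}|_{y=0}=0$ are consistent at $t=0$, matching the regularity class announced.

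To extend to arbitrary $T>0$, I would close a priori estimates at the $H^{18}_{xy}$-level in the order: (i) the $L^2$ identity, clean because $u^{I,0}|_{y=0}=0$ annihilates all boundary terms and the pressure integrates out by incompressibility; (ii) purely tangential derivatives $\pp_x^k$ for $0\le k\le 18$ applied to both equations, with no boundary contributions since $\pp_x^k u^{I,0}|_{y=0}=0$; (iii) mixed time-tangential derivatives $\pp_t^\ell\pp_x^k$ with $2\ell+k\le 18$, where the compatibility conditions propagate the vanishing boundary traces of $\pp_t^\ell u^{I,0}$ to all $t\ge 0$; and (iv) recovery of normal derivatives for $u^{I,0}$ by Stokes-type elliptic regularity applied to $-\zeta\D u^{I,0}+\nabla p^{I,0}=-\pp_t u^{I,0}-(u^{I,0}\cdot\nabla)u^{I,0}-2\zeta\nabla^\perp w^{I,0}$, and for $w^{I,0}$ by inductively using its equation to trade $\pp_y w^{I,0}$ for combinations of $\pp_t$, $\pp_x$ and derivatives of $u^{I,0}$. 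Standard Moser product estimates with the 2D embeddings $H^1\hookrightarrow L^p$ ($p<\infty$) and $H^2\hookrightarrow L^\infty$, followed by Gronwall, close the estimates on any $[0,T]$.

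The step I expect to be the main obstacle is the top-order estimate for $w^{I,0}$: since the $w$-equation carries no diffusion, the commutator $[\pp^\alpha,u^{I,0}\cdot\nabla]w^{I,0}$ with $|\alpha|=18$ produces a term $\pp^\alpha u^{I,0}\cdot\nabla w^{I,0}$ at the same regularity level as the unknown, which cannot be absorbed by the damping $4\zeta w^{I,0}$ or by the source $2\zeta\nabla^\perp\cdot u^{I,0}$ alone. The resolution is to exploit the parabolic smoothing in the $u$-equation: the term $\zeta\D u^{I,0}$ places $u^{I,0}\in L^2(0,T;H^{19}_{xy})$, so that $\pp^\alpha u^{I,0}$ is available in $L^2_t$, paired against $\nabla w^{I,0}\in L^\infty_tH^{17}_{xy}\hookrightarrow L^\infty_{t,x,y}$. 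This asymmetry --- $u^{I,0}$ gains one spatial derivative in $L^2_t$ while $w^{I,0}$ does not --- dictates the exponents $H^{18-2\ell}\cap L^2_tH^{19-2\ell}$ for $\pp_t^\ell u^{I,0}$ against $L^\infty_tH^{19-2j}$ for $\pp_t^j w^{I,0}$ in the announced class. Uniqueness follows from a standard $L^2$ difference estimate and the global extension from the sub-criticality of 2D parabolic energy.
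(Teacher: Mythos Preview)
Your proposal is sound and matches the standard route the paper invokes (the paper in fact omits the proof, referring to \cite{Liu_Wang_2018} for the $H^2$ theory on bounded domains and to the usual induction of Chapter~7 in \cite{Evans_2010} for higher regularity). One point in step~(iv) needs correcting: you cannot recover normal derivatives of $w^{I,0}$ by ``trading $\pp_y w^{I,0}$ for $\pp_t,\pp_x$'' via the transport equation, because the coefficient $u_2^{I,0}$ multiplying $\pp_y w^{I,0}$ vanishes at $y=0$ and is not bounded below anywhere. Instead, since $w^{I,0}$ carries no boundary condition and $u_2^{I,0}|_{y=0}=0$ with $\di u^{I,0}=0$, you may apply \emph{any} multi-index $\pp^\alpha$ with $|\alpha|\le 18$ (including pure $\pp_y$) directly to the $w$-equation and integrate by parts; the transport term produces no boundary contribution and the commutators $[\pp^\alpha,u^{I,0}\cdot\nabla]w^{I,0}$ close by the Moser estimates you already describe. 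With that adjustment the scheme is exactly the standard one and yields the announced regularity classes.
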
 
\begin{remark}
    The proof of Proposition \ref{prop1} follows standard arguments. Global well-posedness for \eqref{I0}-\eqref{I0_cond} with $H^2$ initial data over bounded smooth domains was established in \cite{Liu_Wang_2018}. After some slight modification, one can easily prove  the well-posedness of problem \eqref{I0}-\eqref{I0_cond}. Moreover, the higher regularity follows by standard induction arguments (cf. Chapter 7 of $\cite{Evans_2010}$). 
\end{remark}
The well-posedness of problem \eqref{uepsilon}-\eqref{uepsilon_ini_bou_cond} is stated as follows.
\begin{proposition}\label{prop2}
    Assume that $\left( u_0,w_0\right) \in H_{xy}^{2}$ with $\di u_0   = 0$ and that the compatibility conditions
\begin{align}
    (u_0,w_0)\Big|_{y=0}=0,~~(\pp_t  u,\pp_t w)(0)\Big|_{y=0}=0,
\end{align}
hold, where $ (\pp_t  u(0),\pp_tw(0))$ is the time derivative of $(  u , w )$
at $\{ t = 0\}$ which can be connected
to the initial value $\left( u_0 ,w_0 \right)$  by means of the problem \eqref{uepsilon}-\eqref{uepsilon_ini_bou_cond}. Then,  problem \eqref{uepsilon}-\eqref{uepsilon_ini_bou_cond} has a unique solution $\left( u ,w \right)$ satisfying $\di u =0$ and
\begin{align*}
    & (u,w)\in L^\infty(0,T;H^{2}_{xy})\cap L^2(0,T;H^{3}_{xy}),~~~(\partial_t u,\partial_t w)\in L^\infty(0,T;L^{2}_{xy})\cap L^2(0,T;H^{1}_{xy}).
\end{align*}
\end{proposition}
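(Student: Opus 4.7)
The plan is to construct solutions via a Galerkin approximation adapted to the Stokes operator on $\mathbb{R}^2_+$, derive a priori estimates uniform in the approximation parameter, and then pass to the limit. Since $\varepsilon > 0$ is fixed here, the system \eqref{uepsilon} is fully parabolic with a semilinear (first-derivative) coupling between $u$ and $w$, so the proof parallels the classical 2D Navier-Stokes well-posedness theory of Ladyzhenskaya and Lions, augmented by a scalar parabolic equation for $w$. The constants will be allowed to depend badly on $\varepsilon$ and $T$; the $\varepsilon$-uniform analysis is reserved for the boundary-layer justification later in the paper.

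The first step is the basic $L^2$ energy estimate, obtained by testing the momentum equation with $u$ and the micro-rotation equation with $w$. The pressure drops out by $\mathrm{div}\,u=0$, and since $u|_{y=0}=0$, integration by parts yields
\[
\int_{\mathbb{R}^2_+} \nabla^\perp w \cdot u \dd x \dd y = -\int_{\mathbb{R}^2_+} w\,(\nabla^\perp \cdot u) \dd x \dd y.
\]
The two coupling terms thus combine into a single quantity controlled by $\zeta\|w\|_{L^2}\|\nabla u\|_{L^2}$, which can be absorbed into the dissipation via Young's inequality at the price of a harmless linear $\|w\|_{L^2}^2$ forcing; Gronwall then delivers $(u,w)\in L^\infty_tL^2_{xy}\cap L^2_tH^1_{xy}$. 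The 2D Ladyzhenskaya inequality $\|f\|_{L^4}^2 \lesssim \|f\|_{L^2}\|\nabla f\|_{L^2}$ will tame the convection term in the higher-order estimates and keep all bounds global in time without smallness hypotheses.

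For the higher-regularity bounds the natural strategy is to differentiate the system in $t$ and test with $\partial_t u$ and $\partial_t w$. The initial values $(\partial_t u(0),\partial_t w(0))$ lie in $L^2_{xy}$ because $(u_0,w_0)\in H^2_{xy}$, and their traces on $\{y=0\}$ vanish by the stated compatibility condition; the pressure contribution causes no trouble since $\partial_t u$ is divergence-free with zero boundary trace. Combined with the $L^2_tH^1$-bound already secured, this yields $(\partial_t u,\partial_t w)\in L^\infty_tL^2_{xy}\cap L^2_tH^1_{xy}$. Reading the velocity equation at each fixed time as a stationary Stokes problem with $L^2$-source $-\partial_t u-(u\cdot\nabla)u-2\zeta\nabla^\perp w$ and the $w$-equation as a Dirichlet problem for $-\varepsilon\Delta$, standard Stokes/elliptic regularity on $\mathbb{R}^2_+$ then upgrades these bounds to $(u,w)\in L^\infty_tH^2_{xy}\cap L^2_tH^3_{xy}$.

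Uniqueness follows from a routine $L^2$-energy estimate on the difference of two solutions, closed with the help of the $L^\infty_tH^2$-bound on the drift term. In my view, the main technical obstacle is not local-in-time existence but rather the propagation of the $H^2$-level estimates uniformly up to an arbitrary fixed $T>0$ with only $H^2$ initial data; the 2D structure is essential here, since the Ladyzhenskaya inequality prevents blow-up of the nonlinear convection on any bounded time interval, while the sign-definite damping $4\zeta\|w\|_{L^2}^2$ in the $w$-equation offsets the destabilizing coupling with $u$. Once these a priori bounds are in place, a standard compactness/limit argument within the Galerkin scheme completes the proof.
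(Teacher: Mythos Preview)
Your proposal is correct and follows exactly the standard energy-method route that the paper itself invokes: the paper does not prove Proposition~\ref{prop2} at all, but simply remarks that the presence of $-\varepsilon\Delta w$ makes the system fully parabolic and that one can adopt the framework of \cite{Nong_etal_2025} (an Oldroyd-B well-posedness paper), omitting all details. Your Galerkin scheme, $L^2$ energy estimate, time-differentiated estimate combined with Stokes/elliptic regularity, and Ladyzhenskaya-based control of the convection are precisely the ingredients such a ``standard'' proof would use; in fact you have supplied more detail than the paper does. One small imprecision: the two coupling terms $-2\zeta\langle\nabla^\perp w,u\rangle$ and $2\zeta\langle\nabla^\perp\!\cdot u,w\rangle$ do not partially cancel but rather add to $4\zeta\langle w,\nabla^\perp\!\cdot u\rangle$; this is still absorbed by $\tfrac{\zeta}{2}\|\nabla u\|^2 + C\zeta\|w\|^2$ exactly as you describe, so the argument is unaffected.
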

\begin{remark}
    Due to the present of diffusion term $-\nu\Delta w $ in \eqref{uepsilon}$_2$ (which makes the equation of $w$ a parabolic PDE), the well-posedness of system \eqref{uepsilon}-\eqref{uepsilon_ini_bou_cond} can be established  via standard energy method. In \cite{Nong_etal_2025}, the authors proved the global well-posedness of some 2D Oldroyd-B models which have a more complicated nonlinear structure than \eqref{uepsilon}. One can adopt the analytical framework of  \cite{Nong_etal_2025} to prove Proposition \ref{prop2}. Here, we omit the details for brevity.
\end{remark}
We are now prepared to present our main result.
\begin{theorem}\label{thm1}
    In addition to the conditions of Proposition \ref{prop1} and \ref{prop2}, we assume 
    the $(u_0,w_0)$ satisfies the additional strong compatibility conditions \eqref{compa_strong}, \eqref{compatibility condition of I1}, \eqref{compatibility condition of I2}.
    Then, the following uniform estimates 
    \begin{gather}
         \left\| u(x,y,t)-u^{I,0}(x,y,t) \right\|_{L^\infty_TL^\infty_{xy}}\le C\e^\frac{1}{2}, \label{eq_convergence_u}\\[2mm]
         \left\| w(x,y,t)-w^{I,0}(x,y,t)-w^{b,0}\left(x,\frac{y}{\sqrt{\e}},t\right) \right\|_{L^\infty_TL^\infty_{xy}}\le C\e^\frac{1}{2},\label{eq_convergence_w}
    \end{gather}
hold, where the positive constant $C$ is independent of $\e$ and $w^{b,0}$ is the solution of problem $\eqref{wb0}$.
\end{theorem}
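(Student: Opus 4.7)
The plan is to construct a sufficiently accurate multi-scale (Prandtl-type) approximation
\begin{gather*}
u^{app} = u^{I,0} + \sqrt{\varepsilon}\,(u^{I,1} + u^{b,1}),\qquad
w^{app} = w^{I,0} + w^{b,0} + \sqrt{\varepsilon}\,(w^{I,1} + w^{b,1}),
\end{gather*}
where all the ``interior'' profiles $(u^{I,j},w^{I,j})$ depend on $(x,y,t)$ and the ``boundary'' correctors $(u^{b,j},w^{b,j})$ depend on the fast variable $z=y/\sqrt{\varepsilon}$ and decay rapidly as $z\to\infty$. The leading interior profile is already furnished by Proposition \ref{prop1}; the leading boundary corrector $w^{b,0}$ solves the parabolic problem \eqref{wb0} derived in Appendix \ref{Appendix A}, and the next-order correctors $(u^{I,1},u^{b,1},w^{I,1},w^{b,1})$ are chosen exactly so that the boundary values cancel (matching $\overline{u^{b,1}} = 0$ requires adjusting $u^{I,1}$, and the normal component of $u^{b,1}$ is recovered from divergence-free constraint, introducing one factor of $\sqrt{\varepsilon}$). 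The strong compatibility conditions \eqref{compa_strong}, \eqref{compatibility condition of I1}, \eqref{compatibility condition of I2} are precisely what is needed for these auxiliary profiles to exist with enough regularity and decay.

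Next, I would define the remainders $(U,W) := (u - u^{app}, w - w^{app})$ and plug the expansion into \eqref{uepsilon}. After cancelling the terms that vanish by construction of the profiles, $(U,W)$ satisfies a perturbed micropolar system of the form
\begin{gather*}
\partial_t U + (u\cdot\nabla)U + (U\cdot\nabla)u^{app} + \nabla P - (\varepsilon+\zeta)\Delta U = -2\zeta\nabla^\perp W + \sqrt{\varepsilon}\,R^u,\\
\partial_t W + (u\cdot\nabla)W + (U\cdot\nabla)w^{app} + 4\zeta W - \varepsilon\Delta W = 2\zeta\nabla^\perp\!\cdot U + \sqrt{\varepsilon}\,R^w,\\
\operatorname{div} U = 0,\qquad (U,W)|_{y=0}=0,\qquad (U,W)|_{t=0}=0,
\end{gather*}
where the residuals $R^u,R^w$ collect all the $O(\sqrt{\varepsilon})$-scaled leftovers. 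The key point of the construction is that every un-correctable $O(1)$ source — in particular $\zeta\Delta w^{b,0}$ rescaled in $z$, and the tangential part of $(u^{I,0}\cdot\nabla)w^{b,0}$ at $y=0$ — has been absorbed into the equation for $w^{b,0}$ itself, so that $\|R^u\|_{L^2_T H^1}+\|R^w\|_{L^2_T H^1}$ is uniformly bounded in $\varepsilon$.

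The analytic core is the energy estimate for $(U,W)$. A standard $L^2$ energy identity gives
\begin{equation*}
\tfrac12\tfrac{d}{dt}\|(U,W)\|_{L^2}^2 + (\varepsilon+\zeta)\|\nabla U\|_{L^2}^2 + \varepsilon\|\nabla W\|_{L^2}^2 + 4\zeta\|W\|_{L^2}^2 \lesssim \big|\langle (U\cdot\nabla)u^{app},U\rangle\big| + \big|\langle (U\cdot\nabla)w^{app},W\rangle\big| + \sqrt{\varepsilon}\|(R^u,R^w)\|_{L^2}\|(U,W)\|_{L^2},
\end{equation*}
where the coupling terms $\pm 2\zeta\langle \nabla^\perp W,U\rangle$ and $\mp 2\zeta\langle \nabla^\perp\!\cdot U,W\rangle$ cancel after an integration by parts that is legitimate because $(U,W)$ vanish on $\{y=0\}$. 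The dangerous contribution comes from $(U\cdot\nabla)w^{b,0}$ inside $(U\cdot\nabla)w^{app}$: its normal derivative is of size $1/\sqrt{\varepsilon}$. This is absorbed by writing $U_2(x,y,t) = y\int_0^1 \partial_y U_2(x,\theta y,t)\,d\theta$, so that $U_2 \partial_y w^{b,0} = z\,\partial_y U_2 \cdot (\text{bounded profile})$, and then using the Hardy inequality together with the viscous damping $\zeta\|\nabla U\|_{L^2}^2$ on the right. Tangential derivatives of $w^{b,0}$ are harmless since they produce only $O(1)$ coefficients. After Grönwall, one obtains $\|(U,W)\|_{L^\infty_T L^2}+\|\nabla(U,W)\|_{L^2_T L^2}=O(\sqrt{\varepsilon})$, and repeating the same analysis after applying $\partial_x$ and $\partial_t$ (which commute with the boundary conditions) gives anisotropic $H^m_x H^\ell_y$-bounds of the same size. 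The final $L^\infty$ bound follows from the anisotropic Sobolev inequality $\|f\|_{L^\infty_{xy}}\lesssim \|f\|_{H^2_x H^1_y}^{1/2}\|\partial_y f\|_{H^2_x L^2_y}^{1/2}$, which converts the $O(\sqrt{\varepsilon})$ anisotropic energy bound into the announced $L^\infty$ rate.

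The main obstacle is the boundary layer term $U_2\,\partial_y w^{b,0}$ in the $W$-equation (and its higher-order analogues after differentiating in $x$ and $t$): the fast variable $z=y/\sqrt{\varepsilon}$ forces every normal derivative of $w^{b,0}$ to lose a factor $\varepsilon^{-1/2}$, so only the careful use of the boundary vanishing $U_2|_{y=0}=0$ — combined with the damping from the full viscosity $(\varepsilon+\zeta)\Delta U$ that survives in the $U$-equation even as $\varepsilon\to 0$ — can close the estimate. This is also the reason why the boundary layer appears only in $w$ (whose equation loses its diffusion in the limit) and not in $u$, consistent with \eqref{eq_Prandtl_exp}.
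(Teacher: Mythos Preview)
Your overall strategy --- build a Prandtl-type approximate solution, derive an error system, close energy estimates using the Hardy trick on $U_2\,\pp_y w^{b,0}$, then upgrade to $L^\infty$ via anisotropic Sobolev --- is exactly the paper's route. But the proposal has a genuine quantitative gap: the expansion is truncated one order too early, and as a consequence two of your stated conclusions are false.

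\textbf{The residual is not $O(\sqrt{\varepsilon})$.} Once you include $\sqrt{\varepsilon}\,w^{b,1}$ in $w^{app}$, the coupling term $-2\zeta\nabla^\perp w^{app}$ in the $u$-equation produces, in its first component, $-2\zeta\,\pp_y(\sqrt{\varepsilon}\,w^{b,1})=-2\zeta\,\pp_z w^{b,1}$, which is $O(1)$ pointwise and $O(\varepsilon^{1/4})$ in $L^2_{xy}$. This is \emph{not} cancelled unless you also carry $u^{b,2}_1=2\int_z^\infty w^{b,1}\,ds$ (so that $\zeta\pp_z^2 u^{b,2}_1+2\zeta\pp_z w^{b,1}=0$). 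The same cascade forces $u^{b,2}_2,\,w^{b,2},\,u^{b,3},\,u^{b,4}_2$ and a divergence-free corrector $\varepsilon^{3/2}S$; the paper includes all of these and only then obtains $\|(F,G)\|_{L^\infty_TL^2_{xy}}\le C\varepsilon^{5/4}$. Relatedly, your $u^{app}$ is not divergence-free: since $u^{b,1}_2\equiv 0$, one has $\di u^{app}=\sqrt{\varepsilon}\,\pp_x u^{b,1}_1\neq 0$, so $U$ is not divergence-free either and the pressure/transport cancellations you invoke do not hold as written.

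\textbf{The claimed bound $\|\nabla W\|_{L^2_TL^2}=O(\sqrt{\varepsilon})$ fails.} The dissipation for $W$ is only $\varepsilon\|\nabla W\|^2$, so an $L^2$ energy bound of size $\varepsilon^{2\alpha}$ yields only $\|\nabla W\|_{L^2_TL^2}=O(\varepsilon^{\alpha-1/2})$. With $\alpha=1/2$ (your claim) this is merely $O(1)$, and the anisotropic Sobolev step gives nothing close to $\varepsilon^{1/2}$ in $L^\infty$. The paper needs $\alpha=5/4$ precisely to survive this $\varepsilon^{-1/2}$ loss (and further $\varepsilon^{-1/2}$ losses at each tangential differentiation, since terms like $\langle\pp_x u^a\cdot\nabla W,\pp_xW\rangle$ must be absorbed into $\varepsilon\|\nabla\pp_xW\|^2$); that is why the expansion must go two orders deeper than you propose. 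A minor side remark: the cross terms $-2\zeta\langle\nabla^\perp W,U\rangle$ and $2\zeta\langle\nabla^\perp\!\cdot U,W\rangle$ do not cancel --- integration by parts makes them equal, so they add --- but they are easily absorbed into $\zeta\|\nabla U\|^2+C\|W\|^2$, as the paper does.
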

\begin{remark}
    The results in Theorem \ref{thm1} suggest that the strong boundary layer effect happens to $w$ in the sense of \eqref{eq_convergence_w} but not to $u$ (see \eqref{eq_convergence_u}).
\end{remark}
\begin{remark}
   Theorem \ref{thm1} requires no smallness assumptions on either the time interval $[0,T]$ or initial data $(u_0,w_0)$.
\end{remark}
\begin{remark}
    In the recent work \cite{Chu_Xiao_2023_2D}, the authors studied the limit process $\zeta = \nu \to 0$ for \eqref{u_mu} equipped with the initial-boundary conditions:
    \begin{gather} 
        (u,w)|_{t=0} = (u_0,w_0),~~~\text{in}~ \Omega, \label{eq_con_Chu_in}\\ 
        u\cdot n = 0,~~~\nabla^\perp\cdot u = 0,~~w = 0,~~~\text{on}~\partial \Omega,\label{eq_con_Chu}
    \end{gather}
    over 2D smooth bounded domain $\Omega$. Under some suitable assumptions, they proved that when $\nu,\zeta\to 0$, the solution of problem \eqref{u_mu}, \eqref{eq_con_Chu_in} and \eqref{eq_con_Chu} converge to the solution of the following problem
    \begin{equation} \label{u_mu_0}
        \begin{cases}  
        \pp_t  u+\left(u\cdot\nabla \right)u+\nabla p- \mu \D u= 0,\\
        \pp_t  w +\left(u\cdot\nabla \right)w = 0,\\
        \di u=0,
        \end{cases}  
    \end{equation}  
    \begin{gather} 
        (u,w)|_{t=0} = (u_0,w_0),~~~\text{in}~ \Omega, \label{eq_con_Chu_in_1}\\ 
        u\cdot n = 0,~~~\nabla^\perp\cdot u = 0,~~~\text{on}~\partial \Omega.\label{eq_con_Chu_1}
    \end{gather}
    Moreover, they proved that the strong solution of \eqref{u_mu_0}-\eqref{eq_con_Chu_1} satisfy $w = 0$ on $\partial \Omega$\footnote{The main observation here is that \eqref{u_mu_0}$_2$ is a transport equation for $w$. Then, due to the boundary condition $u|_{\pp \Omega} = 0$, one can easily show that $w|_{\pp \Omega} = 0$. See the proof of Theorem 3 in \cite{Chu_Xiao_2023_2D} for details.}, which coincides with the boundary condition \eqref{eq_con_Chu} of problem \eqref{u_mu}. Hence, there is no strong boundary layer in the limit process from problem \eqref{u_mu}, \eqref{eq_con_Chu_in} and \eqref{eq_con_Chu} to problem \eqref{u_mu_0}-\eqref{eq_con_Chu_1}. In comparison to the results of \cite{Chu_Xiao_2023_2D}, our results in Theorem \ref{thm1} show that there exists a strong boundary layer in the vanishing angular viscosity process. The main reason for this difference is the strong coupling of $u^{I,0}$ and $w^{I,0}$ in \eqref{I0}, see the analysis in Section \ref{Asymptotic analysis} for details.
\end{remark} 
The convergence rates in \eqref{eq_convergence_u} and \eqref{eq_convergence_w} are optimal. Actually, under the assumptions of Theorem \ref{thm1}, the order of   boundary layer thickness is close to the value $O(\varepsilon^{\alpha})(0<\alpha< 1/2)$. To illustrate this, we recall the definition of boundary layer thickness.
\begin{definition}[\cite{Frid_Shelukhin_1999}]\label{Def}
Let $(u,w) $ and $(u^{I,0},w^{I,0}) $ be the solutions of problems \eqref{uepsilon}-\eqref{uepsilon_ini_bou_cond} and \eqref{I0}-\eqref{I0_cond}, respectively. If there is a non-negative function $\delta=\delta(\e)$ satisfying $\delta(\e) \to 0$ as $\e \to 0$ such that
    \begin{align*} 
        \liminf\limits_{\e\to 0}\left\| \left( u-u^{I,0},w-w^{I,0}\right) \right\|_{L^\infty(0,T ;L^\infty(\mathbb{R}^2_+))}>0,
\end{align*}
and
    \begin{align*} 
        \lim\limits_{\e\to 0}\left\| \left( u-u^{I,0},w-w^{I,0}\right) \right\|_{L^\infty(0,T; L^\infty(\mathbb{R}\times(\delta,+\infty)))}=0.
\end{align*}
Then, we say that the initial-boundary value problem  \eqref{uepsilon}-\eqref{uepsilon_ini_bou_cond}  has a non-trivial boundary layer solution as $\e \to 0$, and $\delta(\varepsilon)$ is called a boundary layer thickness (BL-thickness) for problem  \eqref{uepsilon}-\eqref{uepsilon_ini_bou_cond}.
\end{definition}
\begin{remark}
    From Definition \ref{Def}, one can easily find that any function $\tilde{\delta}(\varepsilon)$ satisfying $\tilde{\delta}(\varepsilon) \geq \delta(\varepsilon)$ for small $\varepsilon$ is also a BL-thickness. Thus, the BL-thickness is not unique.
\end{remark}
For the BL-thickness of the problem  \eqref{uepsilon}-\eqref{uepsilon_ini_bou_cond}, we have  the following result.
\begin{theorem}\label{thm2}
    Under the assumptions of Theorem \ref{thm1}, let $\delta(\varepsilon)$ be a smooth function of $\varepsilon>0$ satisfying $\delta(\varepsilon)\downarrow 0$ and $\varepsilon^{\frac{1}{2}}/\delta(\varepsilon)\to 0$ as $\varepsilon \downarrow 0$. 
    Then, $\delta(\varepsilon)$ is a BL-thickness of problem  \eqref{uepsilon}-\eqref{uepsilon_ini_bou_cond}, such that
 \begin{align}\label{liminf}
        \liminf\limits_{\e\to 0}\left\| \left( u-u^{I,0},w-w^{I,0}\right) \right\|_{L^\infty(0,T ;L^\infty(\mathbb{R}^2_+))}>0,
\end{align}
and
    \begin{align}\label{lim}
        \lim\limits_{\e\to 0}\left\| \left( u-u^{I,0},w-w^{I,0}\right) \right\|_{L^\infty(0,T; L^\infty(\mathbb{R}\times(\delta,+\infty)))}=0,
\end{align}
if and only if
$$  
w^{I,0}(x,0,t)\neq 0,~~~\text{for some}~~t\in [0,T].
$$
\end{theorem}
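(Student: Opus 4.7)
The plan is to derive Theorem \ref{thm2} directly from the boundary-layer expansion \eqref{eq_Prandtl_exp} furnished by Theorem \ref{thm1}, using two additional ingredients: (i) the Prandtl matching condition $w^{b,0}(x,0,t)=-w^{I,0}(x,0,t)$, and (ii) the far-field decay $w^{b,0}(x,z,t)\to 0$ as $z\to +\infty$, both of which are built into the construction of $w^{b,0}$ in Appendix \ref{Appendix A}. The two conclusions \eqref{liminf} and \eqref{lim} split naturally: \eqref{lim} is an ``away-from-the-wall'' convergence driven by the decay of $w^{b,0}$, while \eqref{liminf} detects the genuine boundary jump carried by the trace $w^{b,0}|_{z=0}$.

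For the sufficiency direction, assume $w^{I,0}(x_0,0,t_0)\neq 0$ for some $(x_0,t_0)$. To prove \eqref{liminf}, I would evaluate at $y=0$: since $w(\cdot,0,\cdot)=0$ by \eqref{uepsilon_ini_bou_cond} and $u^{I,0}|_{y=0}=0$ by \eqref{I0_cond}, continuity of $w^{I,0}$ yields a neighborhood of $(x_0,t_0)$ on which $|w^{I,0}(x,0,t)|\ge c_0>0$ uniformly in $\varepsilon$, so
\begin{align*}
\bigl\|(u-u^{I,0},w-w^{I,0})\bigr\|_{L^\infty_TL^\infty_{xy}}\ge |w^{I,0}(x_0,0,t_0)|>0.
\end{align*}
For \eqref{lim}, Theorem \ref{thm1} and the triangle inequality yield, on $y>\delta(\varepsilon)$,
\begin{align*}
|w(x,y,t)-w^{I,0}(x,y,t)|\le C\varepsilon^{1/2}+\sup_{(x,t)}\sup_{z\ge \delta(\varepsilon)/\sqrt{\varepsilon}}|w^{b,0}(x,z,t)|,
\end{align*}
and since $\delta(\varepsilon)/\sqrt{\varepsilon}\to+\infty$ by hypothesis, the far-field decay of $w^{b,0}$ drives the last supremum to $0$. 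The $u$-component is controlled directly by $C\varepsilon^{1/2}$ from \eqref{eq_convergence_u}.

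For the necessity direction, I would argue by contraposition: suppose $w^{I,0}(x,0,t)=0$ for every $(x,t)$. The matching condition then forces $w^{b,0}|_{z=0}=-w^{I,0}|_{y=0}\equiv 0$. Moreover, inspection of the problem \eqref{wb0} derived in Appendix \ref{Appendix A} shows that its initial data and forcing terms are all generated from traces of $w^{I,0}$ and $u^{I,0}$ at $y=0$, the latter of which vanish by \eqref{I0_cond}; uniqueness for the linear parabolic problem \eqref{wb0} therefore gives $w^{b,0}\equiv 0$. Substituting this back into Theorem \ref{thm1} yields $\|w-w^{I,0}\|_{L^\infty_TL^\infty_{xy}}+\|u-u^{I,0}\|_{L^\infty_TL^\infty_{xy}}\le C\varepsilon^{1/2}\to 0$, contradicting \eqref{liminf}.

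The main obstacle I anticipate is the quantitative far-field decay of $w^{b,0}(x,z,t)$ as $z\to +\infty$, uniformly in $(x,t)\in\R\times[0,T]$, required for \eqref{lim}. This should follow from the parabolic structure of \eqref{wb0}, for example via an exponentially weighted energy estimate exploiting the confining term $4\zeta w^{b,0}$ together with the dissipation $-\partial_z^2$, but some care is needed to propagate the decay uniformly in time on $[0,T]$. A secondary, more routine point is verifying the uniqueness claim for \eqref{wb0} invoked in the contraposition, which reduces to a standard $L^2$ energy argument for a linear equation with a non-negative zeroth-order term.
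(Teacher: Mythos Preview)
Your proposal is essentially correct and tracks the paper's argument (Lemmas \ref{lem_boundarylayer}--\ref{BL-I}), with one shortcut and one misconception worth noting.

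For \eqref{liminf} you take a more direct route than the paper: rather than invoking the expansion and lower-bounding by $\|w^{b,0}\|_{L^\infty}$ via the triangle inequality, you simply evaluate at the boundary, where $w\equiv 0$ by \eqref{uepsilon_ini_bou_cond} so that $|(w-w^{I,0})(x,0,t)|=|w^{I,0}(x,0,t)|$, uniformly in $\varepsilon$. This is cleaner and valid (both $w$ and $w^{I,0}$ are continuous up to $\{y=0\}$, so the open and closed suprema agree). For \eqref{lim} and for the contraposition, your plan coincides with the paper's.

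The misconception concerns the structure of \eqref{wb0}: there is \emph{no} confining term $4\zeta w^{b,0}$ in that equation. The $4\zeta w^{b,0}$ coming from \eqref{uepsilon}$_2$ cancels exactly against $2\zeta\partial_z u_1^{b,1}$ in the matched expansion (see the proof of Lemma \ref{lem_b0}), leaving the pure heat equation $\partial_t w^{b,0}-\partial_z^2 w^{b,0}=0$. Consequently, the uniform far-field decay you need for \eqref{lim} is not obtained from a damping term but from the weighted parabolic estimates of Lemma \ref{regularity of wb0ub11}, which give $\langle z\rangle^\ell w^{b,0}\in L^\infty_T H^1_x H^1_z$ for all $\ell$ and hence, via Sobolev embedding, $\sup_{x,t}|w^{b,0}(x,z,t)|\le C\langle z\rangle^{-1}$; this is exactly how the paper proceeds in Lemma \ref{BL-b}. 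Similarly, the ``non-negative zeroth-order term'' you mention for uniqueness is absent, but uniqueness for the linear heat problem with zero data is of course still immediate.
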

\subsection{Main ideas}
Unlike the results in \cite{Chu_Xiao_2023_2D}, the rigorous justification of the limit process from \eqref{uepsilon}-\eqref{uepsilon_ini_bou_cond} to \eqref{I0}-\eqref{I0_cond} as $\varepsilon\to 0$ faces a fundamental challenge: the strong coupling between $u^{I,0}$ and $w^{I,0}$ induces a  mismatch between $w$ and $w^{I,0}$ at the boundary. Our core methodology addresses this by constructing boundary layer correctors. Formally, using the method of matched asymptotic expansions (refer, for instance, to Chapter 4 in $\cite{Holmes_2013}$; see also $\cite{Hou_Wang_2019}$), we decompose the solution to problem \eqref{uepsilon}-\eqref{uepsilon_ini_bou_cond} as follows:
\begin{align*}
\displaystyle u(x,y,t)=\,&u^{I,0}(x,y,t)+\e^\frac{1}{2}u^{I,1}(x,y,t)+\e u^{I,2}(x,y,t)\\
&+\e^\frac{1}{2}u^{b,1}\left(x,\frac{y}{\sqrt{\e}},t\right)+\e u^{b,2}\left(x,\frac{y}{\sqrt{\e}},t\right)+\e^\frac{3}{2}u^{b,3}\left(x,\frac{y}{\sqrt{\e}},t\right)\\
&+\e^2(0,u^{b,4}_2)^\top\left(x,\frac{y}{\sqrt{\e}},t\right)+\e^\frac{3}{2}S(x,y,t)+U^\e(x,y,t),\\
\displaystyle p(x,y,t)=\,&p^{I,0}(x,y,t)+\e^\frac{1}{2}p^{I,1}(x,y,t)+\e p^{I,2}(x,y,t)+P^\e(x,y,t),\\
\displaystyle w(x,y,t)=\,&w^{I,0}(x,y,t)+\e^\frac{1}{2}w^{I,1}(x,y,t)+\e w^{I,2}(x,y,t)\\
&+w^{b,0}\left(x,\frac{y}{\sqrt{\e}},t\right)+\e^\frac{1}{2}w^{b,1}\left(x,\frac{y}{\sqrt{\e}},t\right)+\e w^{b,2}\left(x,\frac{y}{\sqrt{\e}},t\right)\\
&+W^\e(x,y,t),
\end{align*} 
where $(u^{I,0},p^{I,0},w^{I,0})$ is the solution to the limit problem  \eqref{I0}-\eqref{I0_cond}. And the higher order profiles  $(u^{I,i},p^{I,i},w^{I,i}),~(u^{b,j}, w^{b,j})$ are given via asymptotic matched expansion, see Section \ref{Asymptotic analysis}. Due to the influence of $2\zeta\nabla^{\perp}\cdot u^{I,0}$ in \eqref{I0}$_2$, in general $w^{I,0}(x,0,t) \neq 0$ for certain $t\in (0,T]$, which leads to a non-trivial boundary layer profile $w^{b,0}$ (see Lemma \ref{regularity of wb0ub11} for details).
Furthermore, thanks to the specific structure of systems \eqref{uepsilon} and  \eqref{I0}, the higher order profiles $(u^{I,i},p^{I,i},w^{I,i})$ and $(u^{b,i}, w^{b,i}),~(i\geq 1)$  satisfy  linear equations. These equations can be solved sequentially, incorporating the initial-boundary conditions. Additionally, these profiles exhibit good regularity and decay properties in the spatial variable, see Section $\ref{Regularity of boundary layer profiles}$.
Consequently, the problem reduces to estimating the remainder terms $(U^\e,P^\e,W^\e)$, which relies on several technical estimates detailed in Section \ref{Estimates_error}.

The rest of the paper is organized as follows. In Section $\ref{Some inequalities}$, we introduce some preliminary results that will be used in the proof. In Section $\ref{Construction of an approximate solution}$, the boundary layer equation is constructed via asymptotic analysis and the well-posedness of the boundary layer profiles is obtained by the energy method. In Section $\ref{Proof of the theorem}$, we construct the error equations and estimate the source terms. Then, the uniform estimates of the error terms are deduced in sequel. Then, we complete the proof of Theorem \eqref{thm1} and \ref{thm2} in Section $\ref{Convergence_rate}$ and \ref{sec_bl}. Finally, in Appendix $\ref{Appendix A}$, we give the details of the
asymptotic analysis.  Appendix $\ref{Appendix B}$ provides the full expressions for relevant source terms appearing in the proof.

\section{Preliminaries}\label{Some inequalities}
In this section, we recall some useful results which will be used later. We begin with the following  inequalities.
\begin{lemma}
    Assume that $f,g,h,\in H^1_{xy}$. Then the inequality 
    \begin{align}\label{eq_Lady}
    \|f\|_{L^4_{xy}}\le C\|f\|^\frac{1}{2}_{L^2_{xy}}  \|\nabla f\|^\frac{1}{2}_{L^2_{xy}},
    \end{align}
    hold.
\end{lemma}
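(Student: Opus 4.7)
The plan is to treat \eqref{eq_Lady} as the classical two-dimensional Ladyzhenskaya/Gagliardo--Nirenberg inequality, whose half-plane version follows from the whole-plane one by reflection. First I would extend any $f\in H^1_{xy}$ to $\tilde f\in H^1(\mathbb{R}^2)$ by even reflection across $\{y=0\}$; this gives $\|\tilde f\|_{L^p(\mathbb{R}^2)}=2^{1/p}\|f\|_{L^p_{xy}}$ for $p\in\{2,4\}$ and $\|\nabla\tilde f\|_{L^2(\mathbb{R}^2)}=\sqrt{2}\,\|\nabla f\|_{L^2_{xy}}$. Hence it suffices to establish the inequality on $\mathbb{R}^2$, and by a standard density argument I may further assume $\tilde f$ is smooth with compact support.

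For such $\tilde f$, I would derive two complementary one-dimensional pointwise bounds. Starting from the identity $\tilde f^2(x,y)=2\int_{-\infty}^{x}\tilde f(x',y)\,\partial_{x'}\tilde f(x',y)\,dx'$, Cauchy--Schwarz in the $x'$-variable yields $\tilde f^2(x,y)\le 2\bigl(\int_{\mathbb{R}}\tilde f^2(x',y)\,dx'\bigr)^{1/2}\bigl(\int_{\mathbb{R}}|\partial_x\tilde f(x',y)|^2\,dx'\bigr)^{1/2}$, and exactly the same argument applied to the variable $y'$ produces a symmetric bound in terms of $\tilde f(x,\cdot)$ and $\partial_y\tilde f(x,\cdot)$.

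Multiplying these two pointwise estimates together gives a control of $\tilde f^4(x,y)$ in which the right-hand side separates as a product of a factor depending only on $y$ and one depending only on $x$. Integrating over $\mathbb{R}^2$ and applying Cauchy--Schwarz once more in each remaining one-dimensional integration, I would obtain $\int_{\mathbb{R}^2}\tilde f^4\,dx\,dy \le 4\|\tilde f\|_{L^2(\mathbb{R}^2)}^2\|\partial_x\tilde f\|_{L^2(\mathbb{R}^2)}\|\partial_y\tilde f\|_{L^2(\mathbb{R}^2)}\le 2\|\tilde f\|_{L^2(\mathbb{R}^2)}^2\|\nabla\tilde f\|_{L^2(\mathbb{R}^2)}^2$. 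Taking fourth roots and translating back to $f$ via the reflection identities from the first paragraph yields \eqref{eq_Lady}. Since the inequality is classical, no serious obstacle arises; the only subtle point is verifying that the even extension does lie in $H^1(\mathbb{R}^2)$ for a general $f\in H^1_{xy}$ without any prescribed trace on $\{y=0\}$, which holds because even reflection introduces no distributional jump in $\partial_y\tilde f$.
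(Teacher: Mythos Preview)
Your argument is correct and is precisely the standard proof of the two-dimensional Ladyzhenskaya inequality. The paper itself does not give a proof at all: it simply remarks that \eqref{eq_Lady} is the 2D Ladyzhenskaya inequality and refers the reader to Seregin's lecture notes \cite{Seregin_2014}. So you have supplied strictly more than the paper does, and there is nothing to compare beyond noting that your even-reflection reduction to $\mathbb{R}^2$ followed by the one-dimensional fundamental-theorem-of-calculus argument is exactly the classical route one would find in the cited reference.
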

\begin{remark}
    Inequality \eqref{eq_Lady} is the 2D Ladyzhenskaya inequality, see \cite{Seregin_2014} for a proof. 
    
\end{remark}
\begin{lemma}[Hardy inequality (\cite{Hardy_Littlewood_Polya_1952})]
         If $1<p<\infty$ and $f\in L^p(0,\infty)$, then $f\in L^1(0,\infty)$ and 
    \begin{align}
        \int_0^{+\infty}\left(\frac{\int_0^yf(t)\dd t}{y}\right)^p\dd y\le C_p\int_0^{+\infty} |f(y)|^p\dd y.
    \end{align}
\end{lemma}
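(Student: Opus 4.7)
The plan is to give the classical one-dimensional Hardy argument: reduce to nonnegative smooth compactly supported $f$ by a density/monotone convergence step, then combine an integration by parts identity with H\"older's inequality, and finally conclude by the usual self-improving trick.

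First I would reduce to the case $f\ge 0$ (replacing $f$ by $|f|$ can only make both sides increase on the left, and leaves the right unchanged up to absolute values), and further to $f\in C_c^\infty((0,\infty))$ nonnegative. The latter reduction is done by choosing $f_n := f\,\chi_{[1/n,n]}$ (mollified if one wants smoothness): the right-hand side converges to $\int_0^\infty|f|^p\,dy$ by dominated convergence, while the averages $F_n(y):=y^{-1}\int_0^y f_n$ increase pointwise to $F(y):=y^{-1}\int_0^y|f|$, so monotone convergence gives the inequality for $f$ once it is proved for each $f_n$.

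Next, for nonnegative $f\in C_c^\infty((0,\infty))$, set $G(y):=\int_0^y f(t)\,dt$ and $F(y):=G(y)/y$. A direct differentiation yields the pointwise identity
\begin{equation*}
(p-1)\,F(y)^p \;=\; -\frac{d}{dy}\!\left(G(y)^p\,y^{1-p}\right) + p\,F(y)^{p-1} f(y),
\end{equation*}
which follows from $\frac{d}{dy}(G^p y^{1-p}) = p G^{p-1} f y^{1-p} + (1-p) G^p y^{-p}$ together with $G^{p-1}y^{1-p}=F^{p-1}$. Since $f$ is supported in a compact subset of $(0,\infty)$, one has $G(y)=O(y)$ near $0$ and $G(y)$ bounded near $\infty$, so $G(y)^p y^{1-p}\to 0$ at both endpoints (using $p>1$ at infinity). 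Integrating the identity over $(0,\infty)$ therefore gives
\begin{equation*}
(p-1)\int_0^\infty F(y)^p\,dy \;=\; p\int_0^\infty F(y)^{p-1} f(y)\,dy.
\end{equation*}
Applying H\"older's inequality with exponents $p/(p-1)$ and $p$ on the right produces
\begin{equation*}
(p-1)\,\|F\|_{L^p}^{\,p} \;\le\; p\,\|F\|_{L^p}^{\,p-1}\,\|f\|_{L^p},
\end{equation*}
and since $\|F\|_{L^p}$ is finite for smooth compactly supported $f$, dividing through yields $\|F\|_{L^p}\le \tfrac{p}{p-1}\|f\|_{L^p}$, i.e.\ the claim with $C_p=(p/(p-1))^p$.

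The only subtle point is the vanishing of the boundary term $G^p y^{1-p}$ in the integration by parts; restricting the preliminary step to $C_c^\infty((0,\infty))$ functions makes this elementary, and the density/monotone-convergence reduction transfers the inequality to general $f\in L^p(0,\infty)$. I do not anticipate a real obstacle beyond bookkeeping, as this is a classical calculation; the slightly delicate point is simply to avoid asserting the boundary terms vanish for a general $L^p$ function (where they need not), which is precisely why the approximation step must come first.
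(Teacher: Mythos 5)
The paper does not prove this lemma; it is stated as a classical fact and attributed directly to Hardy--Littlewood--P\'olya's \emph{Inequalities} (it is Theorem~327 there), so there is no ``paper's proof'' to compare against. Your argument is precisely the classical one from that reference: reduce to nonnegative compactly supported functions, use the identity
\[
(p-1)F^p=-\frac{d}{dy}\bigl(G^p y^{1-p}\bigr)+pF^{p-1}f,
\]
integrate noting that the boundary terms vanish because of compact support and $p>1$, apply H\"older, and divide by the finite quantity $\|F\|_{L^p}^{p-1}$. This is correct and yields the sharp constant $C_p=(p/(p-1))^p$.

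Two small remarks. First, the parenthetical ``(mollified if one wants smoothness)'' is slightly at odds with the monotone-convergence step: mollifying $f\chi_{[1/n,n]}$ destroys the pointwise monotonicity $F_n\uparrow F$. It is cleaner to drop the mollification entirely and work with $f_n=|f|\chi_{[1/n,n]}$; then $G_n(y)=\int_0^y f_n$ is locally Lipschitz, the pointwise differentiation identity holds almost everywhere, and the integration by parts is justified by absolute continuity of $G_n^p y^{1-p}$ on compact subintervals of $(0,\infty)$ together with the observed vanishing at both endpoints. Smoothness of $f$ is never actually used. Second, you do not address the clause ``then $f\in L^1(0,\infty)$'' in the statement, which as literally written is false for $p>1$ (e.g.\ $f(y)=y^{-1}\chi_{[1,\infty)}$); what is true and what your proof implicitly uses is that $f\in L^1_{\mathrm{loc}}(0,\infty)$, via H\"older, so that $\int_0^y f$ is well defined. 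This is a defect of the paper's phrasing, not of your argument, but it is worth flagging rather than silently passing over.
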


Next,
we introduce the following Dirichlet  problem for heat equation which
will be used to analyze the well-posedness of the boundary layer profiles, i.e.
\begin{equation}\label{parobolic equation}
    \begin{cases}
        \pp_t\theta(x,z,t)-\pp_z^2\theta(x,z,t)=g^b(x,z,t),\\
        \theta(x,0,t)=0, ~\theta(x,z,0)=0.
    \end{cases}
\end{equation}
  Then, we have the following well-posedness results.
\begin{proposition}[Proposition 3.1 in \cite{Hou_Wang_2019}]\label{regularity of b}
    For given $T\in(0,+\infty)$ and  $m\in\mathbb{N}_+$. Assume that 
    \begin{align*}
        \langle z\rangle^\ell \pp_t^i g^b\in L^2(0,T;H^{2m-2i}_xL^2_z),~~i=0,1,\cdots,m,
    \end{align*}
    where $\ell\in\mathbb{N}$, $g^b$ satisfies the compatibility condition
    \begin{align*}  
    \pp_t^kg^b\Big|_{z=0}=0,~k=0,1,\cdots,m-1,
    \end{align*}
 for \eqref{parobolic equation}. Then,  \eqref{parobolic equation} admits a  unique solution $\theta(x,z,t)$ on $[0,T]$ satisfies 
\begin{gather*}
     \langle z\rangle^\ell \pp_t^i\theta\in L^\infty(0,T;H^{2m-2i}_xH^1_z)\cap L^2(0,T;H^{2m-2i}_xH^2_z),~i=0,1,\cdots,m.
\end{gather*} 
\end{proposition}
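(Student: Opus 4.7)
The approach is a weighted energy method for this linear parabolic problem, iterated over time derivatives $\partial_t^i$ and tangential derivatives $\partial_x^\alpha$. Since $\partial_x$ commutes with both the heat operator $\partial_t-\partial_z^2$ and the Dirichlet trace at $\{z=0\}$, for any multi-index $\alpha$ with $|\alpha|\le 2(m-i)$ the function $\theta_{i,\alpha}:=\partial_t^i\partial_x^\alpha\theta$ satisfies the same problem with forcing $\partial_t^i\partial_x^\alpha g^b$ and boundary data $\theta_{i,\alpha}|_{z=0}=0$. Before running any estimate I would verify, by induction on $i$, that the initial data $\theta_{i,\alpha}(x,z,0)$ (reconstructed from the equation at $t=0$ as a polynomial in $\partial_z^{2j}\partial_t^k g^b|_{t=0}$ with $j+k=i-1$) is consistent with the Dirichlet trace at $z=0$; this is exactly where the compatibility hypothesis $\partial_t^k g^b|_{z=0}=0$ for $k\le m-1$ enters.

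The core of the proof is two weighted energy estimates for each $\theta_{i,\alpha}$. Testing the equation against $\langle z\rangle^{2\ell}\theta_{i,\alpha}$ and integrating by parts in $z$ yields
\begin{equation*}
\tfrac{1}{2}\tfrac{d}{dt}\|\langle z\rangle^\ell\theta_{i,\alpha}\|_{L^2_{xz}}^2 + \|\langle z\rangle^\ell\partial_z\theta_{i,\alpha}\|_{L^2_{xz}}^2 = -\int 2\ell\,z\langle z\rangle^{2\ell-2}\partial_z\theta_{i,\alpha}\cdot\theta_{i,\alpha}\,\dd x\dd z + \int\langle z\rangle^{2\ell}\partial_t^i\partial_x^\alpha g^b\cdot\theta_{i,\alpha}\,\dd x\dd z,
\end{equation*}
where the boundary contributions at $z=0$ vanish by the Dirichlet condition (those at $z=\infty$ by spatial decay). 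Using $|z|\langle z\rangle^{-1}\le 1$ and Cauchy--Schwarz absorbs the cross term into the dissipation, and Grönwall gives the $L^\infty_TL^2_{xz}\cap L^2_T H^1_z$ bound on $\langle z\rangle^\ell\theta_{i,\alpha}$. To upgrade to the $L^\infty_T H^1_z\cap L^2_T H^2_z$ regularity stated, I would instead test against $\langle z\rangle^{2\ell}\partial_t\theta_{i,\alpha}$. The key point is that $\partial_t\theta_{i,\alpha}|_{z=0}=0$ since the Dirichlet condition holds for all $t$, so the boundary term from integrating $-\partial_z^2\theta_{i,\alpha}\cdot\partial_t\theta_{i,\alpha}$ by parts vanishes. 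This delivers $\|\langle z\rangle^\ell\partial_z\theta_{i,\alpha}\|_{L^\infty_TL^2_{xz}}+\|\langle z\rangle^\ell\partial_t\theta_{i,\alpha}\|_{L^2_TL^2_{xz}}\lesssim\|\langle z\rangle^\ell\partial_t^i\partial_x^\alpha g^b\|_{L^2_TL^2_{xz}}$, and the $L^2_TL^2_{xz}$ bound on $\langle z\rangle^\ell\partial_z^2\theta_{i,\alpha}$ is then read off from the identity $\partial_z^2\theta_{i,\alpha}=\partial_t\theta_{i,\alpha}-\partial_t^i\partial_x^\alpha g^b$. Summing over $|\alpha|\le 2(m-i)$ recovers the full anisotropic Sobolev norm. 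Existence follows by applying the same a priori bounds to a standard Galerkin approximation, and uniqueness from the base estimate applied to the difference of two solutions.

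The main obstacle, and the reason for introducing the weight explicitly, is the commutator between the polynomial weight $\langle z\rangle^{2\ell}$ and the Laplacian $\partial_z^2$: each integration by parts in $z$ generates terms of the form $z\langle z\rangle^{2\ell-2}\partial_z\theta_{i,\alpha}\,\theta_{i,\alpha}$. Thanks to $|z|\langle z\rangle^{-1}\le 1$, these factor as $\langle z\rangle^{\ell-1}\theta_{i,\alpha}$ times $\langle z\rangle^{\ell}\partial_z\theta_{i,\alpha}$, one polynomial power below the weighted dissipation, so they can be absorbed by Cauchy--Schwarz into the dissipation term on the left (with the residual handled by Grönwall). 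A secondary, essentially bookkeeping, difficulty is checking that every time-differentiated boundary trace continues to vanish; this reduces, via the equation evaluated at $t=0$, to the given compatibility conditions on $g^b$.
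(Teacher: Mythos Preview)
The paper does not supply its own proof of this proposition: it is cited as Proposition~3.1 of \cite{Hou_Wang_2019}, and the remark immediately after the statement directs the reader to \cite{Lady Solo and Ural} and \cite{Wang and Wen} for details. Your weighted energy approach is the standard method for such linear parabolic estimates and is correct in outline; the commutation of $\partial_x$ with both the operator and the Dirichlet trace, the treatment of the weight commutator via $|z|\langle z\rangle^{-1}\le 1$ and absorption into the dissipation, and the recovery of $\partial_z^2$-regularity directly from the equation are exactly the moves one finds in the cited references.

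One technical point worth tightening: for $i\ge 1$ your second estimate (testing against $\langle z\rangle^{2\ell}\partial_t\theta_{i,\alpha}$) generates the initial term $\|\langle z\rangle^\ell\partial_z\theta_{i,\alpha}(0)\|_{L^2_{xz}}^2$, and since $\theta_i(0)$ is built from $\partial_t^k g^b|_{t=0}$ and their $\partial_z^2$-iterates, controlling $\partial_z\theta_i(0)$ formally asks for $z$-regularity of $g^b$ not explicitly present in the stated hypothesis. In the paper's actual applications (e.g.\ Lemma~\ref{regularity of wb0ub11}) the forcing is a smooth cut-off $\varphi(z)$ times boundary traces, hence smooth in $z$, so the issue does not arise; in a fully general proof one would either assume slightly more on $g^b$ or close by approximation.
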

\begin{remark}
    The well-posedness of problem \eqref{parobolic equation} is standard; one can refer, for instance, to $\cite{Lady Solo and Ural}$. The regularity stated in Proposition 3.1 can be referred to $\cite{Wang and Wen}$ and the references therein.
\end{remark}
We also need the following linearized problem to  analyze the well-posedness of the higher-order outer layer profiles.
\begin{equation}\label{outer equation}
    \begin{cases}
        \pp_t \tilde{u}+(\tilde{u}\cdot\nabla)a+(a\cdot\nabla)\tilde{u}+\nabla\tilde{p}-\zeta\D\tilde{u}-2\zeta\nabla^\perp\tilde{w}=\tilde{f},\\
        \pp_t \tilde{w}+(\tilde{u}\cdot\nabla)b+(a\cdot\nabla)\tilde{w}+4\zeta\tilde{w}-2\zeta\nabla^\perp\cdot\tilde{u}=\tilde{g},\\
        \di \tilde{u}=0,\\
        \tilde{u}(x,0,t)=0, ~(\tilde{u},\tilde{w})(x,y,0)=(\tilde{u}_0,\tilde{w}_0).
    \end{cases}
\end{equation}
The well-posedness of \eqref{outer equation} is stated as follows.
\begin{proposition}\label{regularity of I}
        For given $T\in(0,+\infty)$ and $m\in \mathbb{N}_+$, assume that
    \begin{gather*}
         \di a=0,~a(x,0,t)=0,~(\tilde{u}_0,\tilde{w}_0)\in H^{2m+1}_{xy},\\
         \pp_t^ia,~\pp_t^ib\in L^\infty_TL^{2m+1-2i}_{xy}\cap L^2_TH^{2m+2-2i}_{xy},~i=0,1,\cdots,~m,\\
         \pp_t^m\tilde{f}\in L^2_TL^2_{xy},~\pp_t^j\tilde{f}\in L^\infty_TH^{2m-1-2j}_{xy}\cap L^2_TH^{2m-2j}_{xy},~j=0,1,\cdots,m-1,\\
         \pp_t^i\tilde{g}\in L^\infty_TH^{2m-2j}_{xy}\cap L^2_TH^{2m+1-2i}_{xy},~i=0,1,\cdots,m,
    \end{gather*}
    and further that  $a,b,\tilde{f}$ and $\tilde{g}$ satisfies the compatibility conditions up to order $m$ for problem \eqref{outer equation}, i.e.,
    \begin{align*}
        \pp_t^\ell \tilde{u}(0)\Big|_{y=0}=0,~\ell=0,1,\cdots,m.
    \end{align*}
Then,  problem $\eqref{outer equation}$ admits a unique solution $(\tilde{u},\tilde{w})$ satisfying 
\begin{gather*}
     \pp_t^{m+1}\tilde{u}\in L^2_TL^2_{xy},~\pp_t^i \tilde{u}\in L^\infty_TH^{2m+1-2i}_{xy}\cap L^2_TH^{2m+2-2i}_{xy},~i=0,1,\cdots,~m,\\
     \tilde{w}\in L^\infty_TH^{2m+1}_{xy},~\pp_t^k\tilde{w}\in L^\infty_TH^{2m+2-2k}_{xy},~k=1,2,\cdots,m+1.
\end{gather*}
\end{proposition}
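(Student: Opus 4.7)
Since \eqref{outer equation} is linear, existence will follow from a standard Galerkin scheme (or Friedrichs regularization) combined with the a priori estimates stated in the conclusion, and uniqueness is immediate from the basic $L^2$-energy inequality applied to the difference of two solutions. The bulk of the work is therefore the a priori estimates, which I propose to organize as an induction on $m$: first prove all time-derivative estimates by running an energy argument on $(\pp_t^i\tilde{u},\pp_t^i\tilde{w})$, and then bootstrap from time regularity to spatial regularity by alternating stationary Stokes regularity for $\tilde{u}$ with transport-type energy estimates for $\tilde{w}$.

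The energy cornerstone is obtained by testing \eqref{outer equation}$_1$ against $\tilde{u}$ and \eqref{outer equation}$_2$ against $\tilde{w}$ and adding. Several structural cancellations make this work: the pressure drops out thanks to $\di\tilde{u}=0$ together with $\tilde{u}|_{y=0}=0$; after one integration by parts, the two coupling terms $-2\zeta\langle\nabla^\perp\tilde{w},\tilde{u}\rangle$ and $-2\zeta\langle\nabla^\perp\cdot\tilde{u},\tilde{w}\rangle$ cancel exactly (the boundary contribution vanishing because $\tilde{u}_1|_{y=0}=0$); and the transport terms $\langle(a\cdot\nabla)\tilde{u},\tilde{u}\rangle$ and $\langle(a\cdot\nabla)\tilde{w},\tilde{w}\rangle$ both vanish thanks to $\di a=0$ and $a|_{y=0}=0$. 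The remaining pairings involving $(\tilde{u}\cdot\nabla)a$, $(\tilde{u}\cdot\nabla)b$, the reaction $4\zeta\|\tilde{w}\|^2_{L^2}$, and the sources are controlled by Cauchy--Schwarz, the Ladyzhenskaya inequality \eqref{eq_Lady}, and the assumed smoothness of $a,b,\tilde{f},\tilde{g}$, so that Gr\"onwall closes the estimate. Applying the same argument to $(\pp_t^i\tilde{u},\pp_t^i\tilde{w})$ for $i=1,\dots,m+1$, where the compatibility condition $\pp_t^\ell\tilde{u}(0)|_{y=0}=0$ preserves the Dirichlet structure and hence all the cancellations above, and where the commutator terms $[\pp_t^i,(a\cdot\nabla)]\tilde{u}$ are handled by the induction hypothesis, produces $\pp_t^i\tilde{u}\in L^\infty_TH^1\cap L^2_TH^2$ for $i\leq m$, $\pp_t^{m+1}\tilde{u}\in L^2_TL^2$, and $\pp_t^i\tilde{w}\in L^\infty_TL^2$ for $i\leq m+1$.

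To promote these time-derivative bounds into the claimed spatial regularity, I would alternate two bootstraps. For $\tilde{u}$, rewrite \eqref{outer equation}$_1$ as the stationary Stokes system
\[
-\zeta\Delta\tilde{u}+\nabla\tilde{p}=\tilde{f}+2\zeta\nabla^\perp\tilde{w}-\pp_t\tilde{u}-(\tilde{u}\cdot\nabla)a-(a\cdot\nabla)\tilde{u},\quad \di\tilde{u}=0,\quad \tilde{u}|_{y=0}=0,
\]
and invoke standard half-plane Stokes regularity: each application trades one time derivative of $\tilde{u}$ and one spatial derivative of $\tilde{w}$ for two spatial derivatives of $\tilde{u}$. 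For $\tilde{w}$, apply $\nabla^k$ to \eqref{outer equation}$_2$, run an $L^2$ energy estimate, and control the commutator $[\nabla^k,(a\cdot\nabla)]\tilde{w}$ by Moser-type product estimates using the assumed regularity of $a$; this transfers spatial regularity of $\tilde{u}$ (through $\nabla^\perp\cdot\tilde{u}$) and of $b,\tilde{g}$ into the same spatial order for $\tilde{w}$. Iterating these two bootstraps a finite number of times, starting from the time-only estimates and walking up in spatial order, yields the stated regularities. The main obstacle is the loss of derivative forced by the absence of diffusion in \eqref{outer equation}$_2$: every spatial derivative of $\tilde{w}$ must ultimately be paid for by a spatial derivative of $\tilde{u}$ or by extra regularity of $a,b,\tilde{g}$, which is precisely why the hypothesis requires $\pp_t^i\tilde{g}\in L^2_TH^{2m+1-2i}$, one derivative more than $\tilde{f}$. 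Arranging the induction so that each $\pp_t^i\tilde{u}$ spatial estimate is proved \emph{before} the corresponding $\pp_t^i\tilde{w}$ spatial estimate keeps the bookkeeping non-circular; the Galerkin limit and the uniqueness argument are then routine.
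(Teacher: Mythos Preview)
The paper does not actually prove this proposition: the accompanying remark simply declares the result ``standard for the higher regularity of the given coefficients and initial data'', points to \cite{Wang and Wen}, and omits all details. Your proposal is therefore not a comparison target but a genuine fleshing-out of what the paper left implicit, and the overall strategy---Galerkin existence, $L^2$ uniqueness, time-derivative energy estimates followed by an alternating Stokes/transport bootstrap in space---is the right one and would succeed.

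One small point to tighten. When you write ``applying the same argument to $(\pp_t^i\tilde{u},\pp_t^i\tilde{w})$ for $i=1,\dots,m+1$'', the case $i=m+1$ is not literally the same: the hypothesis only gives $\pp_t^m\tilde{f}\in L^2_TL^2_{xy}$, so testing the $\pp_t^{m+1}$-equation against $\pp_t^{m+1}\tilde{u}$ is not available. The standard fix is to test the $\pp_t^m$-equation against $\pp_t^{m+1}\tilde{u}$ instead, which produces $\|\pp_t^{m+1}\tilde{u}\|_{L^2}^2+\tfrac{\zeta}{2}\tfrac{d}{dt}\|\nabla\pp_t^m\tilde{u}\|_{L^2}^2$ on the left (the pressure and boundary terms still vanish because $\tilde{u}|_{y=0}=0$ for all time) and only $\pp_t^m\tilde{f}$ on the right. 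This yields $\pp_t^{m+1}\tilde{u}\in L^2_TL^2_{xy}$ and $\pp_t^m\tilde{u}\in L^\infty_TH^1_{xy}$ simultaneously, after which your spatial bootstrap proceeds exactly as you describe.
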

\begin{remark}
Since the system $\eqref{outer equation}$ is linear, with Proposition \ref{prop1} in hand, the proof of Proposition $\ref{regularity of I}$ is standard for the higher regularity of the given coefficients and initial data. One can refer to \cite{Wang and Wen} and the reference therein for related discuss. We omit the details here for brevity.
\end{remark}
\section{Construction of an approximate solution}\label{Construction of an approximate solution}

In this section, we present the equations of outer and inner (i.e. boundary) layer profiles via asymptotic analysis whose derivations will be given in Appendix \ref{Appendix A}. Based on the outer and inner layer profiles, we can construct an approximate solution to the problem  \eqref{uepsilon}-\eqref{uepsilon_ini_bou_cond}  which is used to prove
Theorem $\ref{thm1}$.
 
\subsection{Asymptotic analysis}\label{Asymptotic analysis}
In this subsection, we derive the equations of the outer and inner layer profiles by  the asymptotic analysis (see, e.g. \cite{Wang and Wen}). To begin with, we introduce the following Prandtl type boundary layer expansions: 
\begin{equation}\label{expansions}
    \begin{cases}\displaystyle
        u(x,y,t)=\sum_{j=0}^{+\infty}\e^\frac{j}{2}\left(u^{I,j}(x,y,t)+u^{b,j}(x,z,t)\right),\\ \displaystyle
        p(x,y,t)=\sum_{j=0}^{+\infty}\e^\frac{j}{2}\left(p^{I,j}(x,y,t)+p^{b,j}(x,z,t)\right),\\ \displaystyle
        w(x,y,t)=\sum_{j=0}^{+\infty}\e^\frac{j}{2}\left(w^{I,j}(x,y,t)+w^{b,j}(x,z,t)\right),
    \end{cases}
\end{equation}
where $z=y/\sqrt{\e}$. 
We assume that
\begin{align*}
    u^{b,j}(x,z,t)\to 0,~~p^{b,j}(x,z,t)\to 0,~~w^{b,j}(x,z,t)\to 0,
\end{align*}
fast enough as $z \to +\infty$ . Substituting $\eqref{expansions}$ in to \eqref{uepsilon}-\eqref{uepsilon_ini_bou_cond}, and applying the matched asymptotic method, we can deduce the equations of outer and inner layer profiles in sequence, see Appendix  \ref{Appendix A}   for the details. 
\subsubsection{The leading order inner and outer profiles}
Due to the analysis in Lemma \ref{lem_b0} (see Appendix  \ref{Appendix A}), we find that 
the leading order outer layer profile $(u^{I,0},p^{I,0},w^{I,0})$ satisfies problem  \eqref{I0}-\eqref{I0_cond}. Moreover, the leading boundary layer profiles of velocity and pressure satisfy
$$u^{b,0} = 0,~p^{b,0} = 0.$$  For the angular velocity, there is non-trivial boundary layer, i.e. $w^{b,0}$ is the solution of following problem
\begin{equation}\label{wb0}
    \begin{cases}
        \pp_tw^{b,0}-\pp_z^2 w^{b,0}=0,\\[1mm]
         w^{b,0}(x,z,0)=0,~w^{b,0}(x,0,t)=-w^{I,0}(x,0,t).
    \end{cases}
\end{equation} 
\subsubsection{The first order inner and outer profiles}
From Corollary \ref{coro_u_b_1} and Lemma \ref{lem_b1}, we find that 
    \begin{equation}\label{eq_u_b_1_p}
    u^{b,1}_1 = 2\int_z^{+\infty} w^{b,0}(x,s,t)\mathrm{d}s,~~~u^{b,1}_2 = 0,~~p^{b,1} = 0. 
    \end{equation}
Then, the outer profiles $\left(u^{I,1},w^{I,1},p^{I,1} \right)$ satisfy the following
problem:
\begin{equation}\label{I1}
    \begin{cases}
        \pp_t  u^{I,1}+\left( u^{I,1}\cdot\nabla\right)u^{I,0}+\left( u^{I,0}\cdot\nabla\right)u^{I,1}+\nabla p^{I,1}-\zeta\D u^{I,1}+2\zeta\nabla^\perp w^{I,1}=0,\\ 
        \pp_t  w^{I,1}+\left( u^{I,1}\cdot\nabla\right)w^{I,0}+\left( u^{I,0}\cdot\nabla\right)w^{I,1}+4\zeta w^{I,1}-2\zeta\nabla^\perp\cdot u^{I,1}=0,\\
        \di u^{I,1}=0,\\[1mm]
        \left(u^{I,1},w^{I,1}\right)(x,y,0)=0,\\ 
        \displaystyle u_{1}^{I,1} (x,0,t)= -2\int_0^{+\infty} w^{b,0}(x,s,t)\mathrm{d}s,~~~u_{2}^{I,1} (x,0,t)= 0.
    \end{cases}
\end{equation} 
Furthermore,  $w^{b,1}$ satisfies
\begin{equation}\label{wb1}
    \begin{cases}
        \pp_tw^{b,1}-\pp_z^2 w^{b,1}=g^{b,1},\\
        w^{b,1}(x,0,t)=-w^{I,1}(x,0,t),~ w^{b,1}(x,z,0)=0.
    \end{cases}
\end{equation}
where 
\begin{align*}
    -g^{b,1}=&\,\overline{u^{I,1}_1}\pp_x w^{b,0}+u^{b,1}_1\overline{\pp_x w^{I,0}}+u^{b,1}_1\pp_xw^{b,0}\\
    &+\left(\overline{ u^{I,2}_2}+u^{b,2}_2 \right)\pp_zw^{b,0}+\frac{1}{2}\overline{\pp_y^2u^{I,0}_2}z^2\pp_zw^{b,0}\\
&+\overline{\pp_yu^{I,0}_1}z\pp_xw^{b,0}+\overline{\pp_yu^{I,1}_2}z\pp_zw^{b,0}.
\end{align*}
See {\bfseries Step 3} in Appendix \ref{Appendix A} for details.

\subsubsection{The second order inner and outer profiles}
From Corollary \ref{coro_u_b_2} and Lemma \ref{lem_b2}, we find 
    the second order boundary layer profile $u^{b,2}$ and $p^{b,2}$ satisfy 
    \begin{equation}\label{eq_u_p_b_2}
    u^{b,2}_1 = 2\int_z^{+\infty} w^{b,1}(x,s,t)\mathrm{d}s,~~~u^{b,2}_2 =   2\int_z^\infty \int_\tau^{+\infty} \partial_xw^{b,0}(x,s,t)\mathrm{d}s\mathrm{d}\tau,~~~p^{b,2} = 0. 
    \end{equation}
The second order outer profiles $\left(u^{I,2},w^{I,2},p^{I,2} \right)$  satisfy the following problem:
\begin{equation}\label{I2}
    \begin{cases}
        \pp_t  u^{I,2}+\left( u^{I,2}\cdot\nabla\right)u^{I,0}+\left( u^{I,0}\cdot\nabla\right)u^{I,2}+\nabla p^{I,2}-\zeta\D u^{I,2}+2\zeta\nabla^\perp w^{I,2}=f^{I,2},\\ 
        \pp_t  w^{I,2}+\left( u^{I,2}\cdot\nabla\right)w^{I,0}+\left( u^{I,0}\cdot\nabla\right)w^{I,2}+4\zeta w^{I,2}-2\zeta\nabla^\perp\cdot u^{I,2}=g^{I,2},\\
        \di u^{I,2}=0,\\
        u^{I,2}(x,0,t)= -2\begin{pmatrix}
                \int_0^{+\infty} w^{b,1}(x,s,t)\mathrm{d}s \\  \int_0^\infty \int_\tau^{+\infty} \partial_xw^{b,0}(x,s,t)\mathrm{d}s\mathrm{d}\tau
        \end{pmatrix},~~\left(u^{I,2},w^{I,2}\right)(x,y,0)=0.
    \end{cases}
\end{equation}
where 
\begin{align*}
     f^{I,2}=-\left(u^{I,1}\cdot\nabla\right)u^{I,1}+\D u^{I,0}, ~~
     g^{I,2}=-\left(u^{I,1}\cdot\nabla\right)w^{I,1}+\D w^{I,0}.
\end{align*}
Furthermore, $w^{b,2}$ satisfies 
\begin{equation}\label{wb2}
    \begin{cases}
        \pp_tw^{b,2}-\pp_z^2 w^{b,2}=g^{b,2},\\
        w^{b,2}(x,0,t)=-w^{I,2}(x,0,t),~ w^{b,2}(x,z,0)=0,
    \end{cases}
\end{equation}
where
\begin{align*} 
       - g^{b,2} =\,& 
         \overline{u^{I,2}_1}\pp_x w^{b,0}+u^{b,2}_1\overline{ \pp_xw^{I,0}}+u^{b,2}_1\pp_xw^{b,0} +\overline{u^{I,1}_1}\pp_x w^{b,1}+u^{b,1}_1\overline{ \pp_xw^{I,1}}+u^{b,1}_1\pp_xw^{b,1}\\
&+u^{b,2}_2\overline{\pp_yw^{I,0}}+\left(\overline{u^{I,2}_2}+u^{b,2}_2 \right)\pp_zw^{b,1} -2\zeta\pp_xu^{b,2}_2  -2 \partial_z  u_1^{b,1}-\pp_x^2w^{b,0}+\overline{\pp_yu^{I,1}_1}z\pp_xw^{b,0}\\
& +u^{b,1}_1\overline{\pp_y\pp_xw^{I,0}}z+\overline{\pp_yu^{I,0}_1}z\pp_xw^{b,1}+\overline{\pp_yu^{I,2}_2}z\pp_zw^{b,0}+\overline{\pp_yu^{I,1}_2}z\pp_zw^{b,1}\\
&
+\frac{1}{6}\overline{\pp_y^3u^{I,0}_2}z^3\pp_zw^{b,0}+\frac{1}{2}\overline{\pp_y^2u^{I,0}_1}z^2\pp_xw^{b,0}+\frac{1}{2}\overline{\pp_y^2u^{I,0}_2}z^2\pp_zw^{b,1}+\frac{1}{2}\overline{\pp_y^2u^{I,1}_2}z^2\pp_zw^{b,0} \\
&  -\left(2\int_0^{z}\int_\tau^{\infty} \partial_xw^{b,1}(x,s,t)\mathrm{d}s\mathrm{d}z\right)\pp_zw^{b,0}  + 2\int_z^\infty\left( \zeta \partial_x^2u_1^{b,1}  -  \partial_t u^{b,1}_1\right)\mathrm{d}z . 
\end{align*}
See {\bfseries Step 4} in Appendix \ref{Appendix A} for details. 
\subsubsection{Some higher order profiles}
We also need the following higher order profiles of velocity:   
    \begin{gather}\label{eq_u_b3_1}
        u^{b,3}_1 = 2\int_z^{\infty}w^{b,2} \mathrm{d}z - \frac{1}{\zeta} u_1^{b,1} - \frac{1}{\zeta}\int_z^{\infty}  \int_\tau^\infty\left( \zeta \partial_x^2u_1^{b,1}  -  \partial_t u^{b,1}_1\right)\mathrm{d}\tau \mathrm{d}z,\\ 
        u^{b,3}_2 =  2\int_z^\infty \int_\tau^{+\infty} \partial_xw^{b,1}(x,s,t)\mathrm{d}s\mathrm{d}\tau,
    \end{gather}
    and 
\begin{equation}\label{eq_u_b4_2} 
           u^{b,4}_2 = 2\int_z^{\infty}\int_\tau^\infty \partial_x w^{b,2}\mathrm{d}\tau \mathrm{d}z - \frac{1}{\zeta}\int_z^{\infty} \partial_x u_1^{b,1}\mathrm{d}z - \frac{1}{\zeta}\int_z^{\infty}  \int_\tau^\infty \int_s^\infty\left( \zeta \partial_x^3u_1^{b,1}  -  \partial_t\partial_x u^{b,1}_1\right)\mathrm{d}s\mathrm{d}\tau \mathrm{d}z.  
\end{equation}
See Lemma \ref{lem_b_higher} for details.

\subsection{Regularity of the outer and boundary layer profiles}\label{Regularity of boundary layer profiles}
In order to use the outer and inner layer profiles deduced in Section \ref{Asymptotic analysis}, we   prove the well-posedness of those profiles. To prove the higher order regularities of
the inner layer profiles, we need the following strong compatibility conditions:
\begin{equation}\label{compa_strong}
    \partial_t^j w^{I,0}(0)|_{y=0} = 0,~~~j=1,\cdots,8.
\end{equation}
Then, for $w^{b,0}$ and $u^{b,1}$, we have the following results.
\begin{lemma}\label{regularity of wb0ub11}
    Under the assumptions of Theorem $\ref{thm1}$, there exists a unique solution  $w^{b,0}$ of the problem  $\eqref{wb0}$ on $[0,T]$, satisfying
    \begin{align*}
        \langle z\rangle^\ell\pp_t^j w^{b,0}\in L^\infty(0,T;H^{16-2j}_xH^1_z)\cap L^2(0,T;H^{16-2j}_xH^2_z),
    \end{align*}
    for all $\ell\in\mathbb{N}$ and $j=0,1\cdots,8$. Furthermore, using \eqref{eq_u_b_1_p}, we have 
    \begin{align*} 
        \langle z\rangle^\ell\pp_t^j u^{b,1}_1\in L^\infty(0,T;H^{16-2j}_xH^2_z)\cap L^2(0,T;H^{16-2j}_xH^3_z).
    \end{align*}
\end{lemma}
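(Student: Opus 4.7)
The plan is to reduce problem \eqref{wb0} to the homogeneous Dirichlet heat problem \eqref{parobolic equation} already covered by Proposition \ref{regularity of b}, and then to reconstruct $u^{b,1}_1$ from $w^{b,0}$ by direct integration. First I would lift the boundary data using the cut-off function $\vphi$ from \eqref{cut-off function} by setting
\begin{equation*}
\theta(x,z,t) := w^{b,0}(x,z,t) + \vphi(z)\,w^{I,0}(x,0,t).
\end{equation*}
Since $\vphi(0)=1$, the new unknown satisfies $\theta(x,0,t)=0$, and a direct computation gives
\begin{equation*}
\pp_t\theta - \pp_z^2\theta \;=\; \vphi(z)\,\pp_t w^{I,0}(x,0,t)\; -\; \vphi''(z)\,w^{I,0}(x,0,t) \;=:\; g^b(x,z,t),
\end{equation*}
together with $\theta(x,z,0) = \vphi(z)\,w^{I,0}(x,0,0) = 0$ coming from the initial--boundary compatibility built into \eqref{wb0}.

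Next I would check that $g^b$ meets the hypotheses of Proposition \ref{regularity of b} with $m=8$. By Proposition \ref{prop1} we have $\pp_t^i w^{I,0}\in L^\infty(0,T;H^{18-2i}_{xy})$, and the one-dimensional trace theorem promotes this to $\pp_t^i w^{I,0}(x,0,\cdot)\in L^\infty(0,T;H^{17-2i}_x)$. Because $\vphi$ and $\vphi''$ are smooth and compactly supported in $[0,1]$, every weighted quantity $\langle z\rangle^\ell\pp_t^i g^b$ lies in $L^2(0,T;H^{16-2i}_x L^2_z)$ for arbitrary $\ell\in\mathbb{N}$ and $i=0,\ldots,8$. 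The required corner compatibility
\begin{equation*}
\pp_t^k g^b\big|_{z=0,\,t=0} \;=\; \pp_t^{k+1}w^{I,0}(x,0,0)\; -\; \vphi''(0)\,\pp_t^k w^{I,0}(x,0,0) \;=\;0,\qquad k=0,1,\ldots,7,
\end{equation*}
is then exactly what the strong compatibility assumption \eqref{compa_strong} supplies. Proposition \ref{regularity of b} now yields the claimed regularity of $\theta$, and since $\vphi(z)w^{I,0}(x,0,t)$ is compactly supported in $z$ and inherits the same temporal and horizontal smoothness from $w^{I,0}$, the estimates pass intact to $w^{b,0}=\theta-\vphi(z)w^{I,0}(x,0,t)$.

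Finally I would propagate these bounds to $u^{b,1}_1 = 2\int_z^{+\infty} w^{b,0}(x,s,t)\dd s$. Differentiating under the integral gives $\pp_z u^{b,1}_1 = -2w^{b,0}$ and $\pp_z^2 u^{b,1}_1 = -2\pp_z w^{b,0}$, so the $H^2_z$ and $H^3_z$ parts of the claim are immediate consequences of the $H^1_z$ and $H^2_z$ bounds on $w^{b,0}$. For the pure $L^2_z$ piece of $\langle z\rangle^\ell\pp_x^\alpha\pp_t^j u^{b,1}_1$ I would apply Cauchy--Schwarz on the tail:
\begin{equation*}
\langle z\rangle^\ell \bigl|\pp_x^\alpha\pp_t^j u^{b,1}_1\bigr| \;\lesssim\; \Bigl(\int_z^{+\infty}\langle s\rangle^{-2}\dd s\Bigr)^{\!\frac{1}{2}} \Bigl(\int_z^{+\infty}\langle s\rangle^{2\ell+2}\,|\pp_x^\alpha\pp_t^j w^{b,0}|^2\dd s\Bigr)^{\!\frac{1}{2}},
\end{equation*}
which is square-integrable in $z$ because the previous step supplies $\langle z\rangle^{\ell+1}\pp_t^j w^{b,0}\in L^\infty(0,T;H^{16-2j}_x L^2_z)$ for every $\ell$.

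The main issue is not hard analysis but consistent bookkeeping of regularity indices: one must verify that the choice $m=8$ in Proposition \ref{regularity of b} is compatible with the regularity of $w^{I,0}(x,0,t)$ delivered by Proposition \ref{prop1}, and that \eqref{compa_strong} is calibrated to kill exactly the boundary traces of $\pp_t^k g^b$ for $k=0,\ldots,7$; once these counts are checked, the remainder is a direct application of the cited propositions together with a Cauchy--Schwarz/Hardy-type tail estimate.
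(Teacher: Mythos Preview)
Your proposal is correct and follows essentially the same route as the paper: the same lifting $\theta = w^{b,0} + \vphi(z)\overline{w^{I,0}}$, the same reduction to Proposition \ref{regularity of b} with $m=8$, the same use of \eqref{compa_strong} for corner compatibility, and the same tail-integral argument for $u^{b,1}_1$. One minor slip: in your Cauchy--Schwarz split you insert the weight $\langle s\rangle^{-2}$, whose tail integral gives only $\langle z\rangle^{-1/2}$ pointwise decay, and $\langle z\rangle^{-1}$ is not integrable on $\mathbb{R}_+$; since, as you note, the previous step delivers $\langle z\rangle^{\ell'} w^{b,0}\in L^2_z$ for \emph{every} $\ell'$, simply replace $\langle s\rangle^{-2}$ by $\langle s\rangle^{-4}$ (trading $\ell+1$ for $\ell+2$) and the $L^2_z$ bound closes cleanly---this is exactly what the paper does.
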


\begin{proof}
    Denoting $\hat{w}^{b,0}(x,z,t)=w^{b,0}(x,z,t)+\vphi(z)w^{I,0}(x,0,t)$ with $\varphi$ defined in \eqref{cut-off function}, and using $\eqref{wb0}$, we have
    \begin{equation}\label{eq_w_b0_m}
        \begin{cases}
            \pp_t \hat{w}^{b,0} -\pp_z^2\hat{w}^{b,0} =\hat{r}^{b,0},\\
            \hat{w}^{b,0}(x,0,t)=0,~\hat{w}^{b,0}(x,z,0)=0.
        \end{cases}
    \end{equation}
    where
    \begin{align*}
        \hat{r}^{b,0}(x,z,t)=\vphi(z)\pp_t \overline{w^{I,0}} -\pp_z^2\vphi(z)\overline{w^{I,0}}.
    \end{align*}
    Noticing that, for $h(x,y,t)\in H^{k+1}_{xy}$ with fixed $t\in\mathbb{R}_+$ and $k\in\mathbb{N}$, we have 
    \begin{align*}
         \|\overline{h}\|_{H^k_x}^2 =&\, \sum_{i = 0}^k \int_{\mathbb{R}} |\partial_x^ih(x,0,t)|^2\mathrm{d} x \leq \sum_{i = 0}^k \int_{\mathbb{R}} \|\partial_x^ih(x,y,t)\|_{L^\infty_y}^2\mathrm{d} x \\ 
         \lesssim&\, \sum_{i = 0}^k \int_{\mathbb{R}} \|\partial_x^ih(x,y,t)\|_{H^1_y}^2\mathrm{d} x \lesssim   \|\partial_x^ih(x,y,t)\|_{H^{k+1}_{xy}}^2. 
    \end{align*}
 Then, using Proposition \ref{prop1}, we have, for $j=0,1,\cdots,8$ and $\ell\in\mathbb{N}$, that 
    \begin{align*}
        \left\| \langle z\rangle^\ell \pp_t^j\hat{r}^{b,0}\right\|_{L^2_TH^{16-2j}_xL^2_z}\lesssim&\,   \left\| \pp_t^j\pp_t\overline{w^{I,0}}\right\|_{L^2_TH^{16-2j}_{x}}   \left\| \langle z\rangle^{\ell }\vphi(z) \right\|_{L^2_z} +\left\| \pp_t^j\overline{w^{I,0}}\right\|_{L^2_TH^{16-2j}_{x}}\left\| \langle z\rangle^\ell\pp_z^2 \vphi(z)  \right\|_{L^2_z}\\
        \lesssim&\,\left\| \pp_t^{j+1}w^{I,0}\right\|_{L^2_TH^{19-2(j+1)}_{xy}}+\left\| \pp_t^{j}w^{I,0}\right\|_{L^2_TH^{17-2j}_{xy}} \le C.
    \end{align*} 
    Moreover, from \eqref{compa_strong}, one can easily check that $\hat{r}^{b,0}$ satisfies the eighth order compatibility condition stated in Proposition $\ref{regularity of b}$. Hence, using the Proposition $\ref{regularity of b}$ with $m=8$, we find that  problem  \eqref{eq_w_b0_m}  admits a unique solution $\hat{w}^{b,0}$ satisfying
    \begin{align*}
        \langle z\rangle^\ell \pp_t^j\hat{w}^{b,0}\in L^\infty(0,T;H^{16-2j}_xH^1_z)\cap L^2(0,T;H^{16-2j}_xH^2_z),~~j=0,1,\cdots,8.  
    \end{align*}
    Therefore,  \eqref{wb0} admits a unique solution $w^{b,0}$ satisfying
    \begin{align*}
        \langle z\rangle^\ell \pp_t^jw^{b,0}\in L^\infty(0,T;H^{16-2j}_xH^1_z)\cap L^2(0,T;H^{16-2j}_xH^2_z),~~j=0,1,\cdots,8. 
    \end{align*}
    Next, using \eqref{eq_u_b_1_p}, we have  
    \begin{align*}
        \left\| \langle z\rangle^\ell\pp_t^ju^{b,1}_1\right\|_{L^\infty_TH^{16-2j}_{x}H^2_z}^2=&~4\left\| \langle z\rangle^\ell\pp_t^j\int_z^{+\infty}w^{b,0}\dd z\right\|_{L^\infty_TH^{16-2j}_{x}H^2_z}^2\\
        \lesssim&~\left\| \langle z\rangle^\ell\pp_t^j\int_z^{+\infty}w^{b,0}\dd z\right\|_{L^\infty_TH^{16-2j}_{x}L^2_z}^2+\left\| \langle z\rangle^\ell\pp_t^j w^{b,0}\right\|_{L^\infty_TH^{16-2j}_{x}H^1_z}^2.
    \end{align*}
For the first term on the right hand side of above inequality, we have 
\begin{align*}
    &\left\| \langle z\rangle^\ell\pp_t^j\int_z^{+\infty}w^{b,0}\dd z\right\|_{L^\infty_TH^{16-2j}_{x}L^2_z}^2\\ 
    \lesssim&~ \left\|\sum_{j^\prime=0}^{16-2j}\int_0^{+\infty}\frac{1}{\langle z\rangle^4} \langle z\rangle^{2\ell+4}\left(\int_z^{+\infty}\left\|\pp_t^j\pp_x^{j^\prime} w^{b,0}(x,\eta,t)\right\|_{L^2_x}\dd \eta\right)^2\dd z\right\|_{L^\infty_T}\\
    \lesssim&~\left\|\sum_{j^\prime=0}^{16-2j}\int_0^{+\infty}\frac{1}{\langle z\rangle^4} \left(\int_0^{+\infty}\left\|\langle \eta\rangle^{\ell+2}\pp_t^j\pp_x^{j^\prime} w^{b,0}(x,\eta,t)\right\|_{L^2_x}\dd \eta\right)^2\dd z\right\|_{L^\infty_T}\\
    \lesssim&~\left\|\langle z\rangle^{\ell+2}\pp_t^jw^{b,0} \right\|_{L^\infty_TH^{16-2j}_{x}L^2_z}^2.
\end{align*}
Hence, we have 
\begin{align*}
    \left\| \langle z\rangle^\ell\pp_t^ju^{b,1}_1\right\|_{L^\infty_TH^{16-2j}_{x}H^2_z}^2\lesssim\left\| \langle z\rangle^{\ell+2}\pp_t^j w^{b,0}\right\|_{L^\infty_TH^{16-2j}_{x}H^1_z}^2.
\end{align*}
Similarly, we can prove 
    \begin{align*}
        \langle z\rangle^\ell\pp_t^j u^{b,1}_1\in L^\infty(0,T;H^{16-2j}_xH^2_z)\cap L^2(0,T;H^{16-2j}_xH^3_z),~~j=0,1,\cdots,8. 
    \end{align*} 
    The proof is complete.
\end{proof} 
To prove  higher order regularity of the first order outer layer profiles $(u^{I,1},w^{I,1})$, we need the following compatibility  conditions on the initial data: 
        \begin{align}\label{compatibility condition of I1}
        \pp_t^\ell u^{I,1}(0) \Big|_{y=0}=0,~\ell=0,1,\cdots,6,
    \end{align}
    which can be represented by $(u^{I,0},w^{I,0})$ via \eqref{I0} and \eqref{I1}. Here, we omit the details for brevity. One can refer to Remark 3.7 in \cite{Wang and Wen} for some related discuss.
\begin{lemma}\label{regularity of I1}
    Under the assumptions of Theorem $\ref{thm1}$, there exists a unique solution  $(u^{I,1},w^{I,1})$ of the problem  $\eqref{I1}$,   satisfying
    \begin{gather*}
         \pp_t^7 u^{I,1}\in L^2_TL^2_{xy},~ \pp_t^i u^{I,1}\in L^\infty_TH^{13-2i}\cap L^2_TH^{14-2i}, i=0,1\cdots,6,\\
         w^{I,1}\in L^\infty_TL^{13}_{xy},~ \pp_t^j w^{I,1}\in L^\infty_TL^{14-2j}_{xy}, j=1,2,\cdots,7.
    \end{gather*}
\end{lemma}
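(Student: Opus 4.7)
The plan is to reduce the inhomogeneous-boundary linear system \eqref{I1} to the framework of Proposition \ref{regularity of I} by constructing an explicit divergence-free lifting of the boundary data. Set $\psi(x,t):=-2\int_{0}^{+\infty}w^{b,0}(x,s,t)\,\dd s$ and $\Phi(x,y,t):=-y\vphi(y)\psi(x,t)$, where $\vphi$ is the cut-off from \eqref{cut-off function}, and put $U:=\nabla^\perp\Phi$. By construction $\di U\equiv 0$, $U(x,0,t)=(\psi(x,t),0)$ (using $\vphi(0)=1$ and $\vphi^\prime(0)=0$), and $U(x,y,0)\equiv 0$ since $w^{b,0}(x,z,0)=0$ forces $\psi(x,0)=0$. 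Letting $v:=u^{I,1}-U$, the pair $(v,w^{I,1})$ satisfies a problem of the form \eqref{outer equation} with coefficients $a=u^{I,0}$, $b=w^{I,0}$, zero initial and boundary data, and source terms
\[
\tilde f=-\pp_t U-(U\cdot\nabla)u^{I,0}-(u^{I,0}\cdot\nabla)U+\zeta\D U,\qquad \tilde g=-(U\cdot\nabla)w^{I,0}+2\zeta\nabla^\perp\!\cdot U.
\]
(A harmless sign flip $\tilde w=-w^{I,1}$ matches the coupling convention of \eqref{outer equation}; since Proposition \ref{regularity of I} is a linear well-posedness statement, the argument applies verbatim.)

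Next I would verify the hypotheses of Proposition \ref{regularity of I} with $m=6$. The regularity demanded of $a=u^{I,0}$ and $b=w^{I,0}$ is strictly weaker than what Proposition \ref{prop1} supplies on the finite interval $[0,T]$. For the lifting, note that the $y$-support of $U$ lies in $[0,1]$ and the $y$-factor of $\Phi$ is smooth, so $\|\pp_t^k U\|_{H^{s}_{xy}}\lesssim\|\pp_t^k\psi\|_{H^s_x}$ for all $k,s$. Splitting $1=\langle z\rangle^{-2}\langle z\rangle^{2}$ and invoking Lemma \ref{regularity of wb0ub11} (valid up to $j=8$) yields $\pp_t^k\psi\in L^\infty_TH^{16-2k}_x$ for $0\le k\le 8$, which amply supplies the bounds on $\tilde f$ and $\tilde g$ required by Proposition \ref{regularity of I}. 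The compatibility conditions $\pp_t^\ell v(0)|_{y=0}=0$ for $0\le\ell\le 6$ hold trivially because $v|_{y=0}\equiv 0$; they are equivalent, via $u^{I,1}=v+U$, to the assumed compatibility conditions \eqref{compatibility condition of I1}.

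Applying Proposition \ref{regularity of I} then yields the stated regularity of $(v,w^{I,1})$, and Lemma \ref{regularity of I1} follows since $U$ is manifestly as regular as $v$ in both $t$ and space. The only real bookkeeping hurdle is matching the indices in Proposition \ref{regularity of I} with those furnished by Lemma \ref{regularity of wb0ub11}: the most stringent requirement, $\pp_t^6\tilde f\in L^2_TL^2_{xy}$, forces $\pp_t^7 U\in L^2_TL^2_{xy}$ through the $\pp_t U$ term, and this is what dictates the choice of the top index $j=8$ in Lemma \ref{regularity of wb0ub11} (and, tracing further back, the $H^{18}_{xy}$ regularity imposed in Proposition \ref{prop1}). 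Since the reduced system is linear in $(v,w^{I,1})$, no smallness assumption on the data is needed, consistent with the absence of smallness in the statement.
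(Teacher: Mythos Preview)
Your proposal is correct and follows essentially the same strategy as the paper: homogenize the boundary data by subtracting an explicit divergence-free lifting, then invoke Proposition~\ref{regularity of I} with $m=6$, using Proposition~\ref{prop1} for the coefficients $(a,b)=(u^{I,0},w^{I,0})$ and Lemma~\ref{regularity of wb0ub11} to bound the source terms. The paper's lifting $S^{I,1}$ is written componentwise using $\vphi^2(y)+\vphi'(y)\int_0^y\vphi$, whereas your stream-function construction $U=\nabla^\perp\bigl(-y\vphi(y)\psi\bigr)$ is arguably cleaner; both satisfy $\di=0$, match the trace $(\psi,0)$ at $y=0$, vanish at $t=0$, and have compact $y$-support, so the subsequent bookkeeping is identical. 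One small remark: the claim that the compatibility conditions for $v$ ``hold trivially because $v|_{y=0}\equiv 0$'' is circular (that is the boundary condition, not a consequence of the data); the correct justification is precisely the second half of your sentence, namely that they reduce, via $v=u^{I,1}-U$ together with $\partial_t^\ell U(\cdot,\cdot,0)=0$, to the assumed conditions \eqref{compatibility condition of I1}.
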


\begin{proof}
    Denoting \begin{align*}
        \hat{u}^{I,1}(x,y,t)=&~u^{I,1}(x,y,t)+\left(\begin{matrix}
        2\left[\varphi^2(y)+\varphi^\prime(y)\int^{y}_0\varphi(\xi)\dd \xi\right]  \int_0^{+\infty} w^{b,0}(x,s,t)\mathrm{d}s\\
        -2\varphi(y)\int^{y}_0\varphi(\xi)\dd \xi  \int_0^{+\infty} \pp_xw^{b,0}(x,s,t)\mathrm{d}s
    \end{matrix}\right)\\
    =:&~u^{I,1}(x,y,t)+S^{I,1}(x,y,t).
    \end{align*}
    and using $\eqref{I1}$, we have
    \begin{equation*}
    \begin{cases}
        \pp_t  \hat{u}^{I,1}+\left( \hat{u}^{I,1}\cdot\nabla\right)u^{I,0}+\left( u^{I,0}\cdot\nabla\right)\hat{u}^{I,1}+\nabla \hat{p}^{I,1}-\zeta\D \hat{u}^{I,1}-2\zeta\nabla^\perp w^{I,1}=\hat{f}^{I,1},\\ 
        \pp_t  w^{I,1}+\left( \hat{u}^{I,1}\cdot\nabla\right)w^{I,0}+\left( u^{I,0}\cdot\nabla\right)w^{I,1}+4\zeta w^{I,1}-2\zeta\nabla^\perp\cdot \hat{u}^{I,1}=\hat{g}^{I,1},\\
        \di \hat{u}^{I,1}=0,\\
        \hat{u}^{I,1}(x,0,t)=0,~\left(\hat{u}^{I,1},w^{I,1}\right)(x,y,0)=0.
    \end{cases}
\end{equation*}
where
\begin{gather*}
    \hat{f}^{I,1}=\pp_tS^{I,1}+S^{I,1}\cdot\nabla u^{I,0}+u^{I,0}\cdot\nabla S^{I,1}-\D S^{I,1},\\
     \hat{g}^{I,1}=S^{I,1}\cdot\nabla w^{I,0}-2\zeta\nabla^\perp\cdot S^{I,1}.
\end{gather*}
Using Proposition \ref{prop1} and Lemma \ref{regularity of wb0ub11}, a direct calculation yields that
\begin{gather*}
     \pp_t^6 \hat{f}^{I,1}\in L^2_TL^2_{xy}, ~ \pp^j_t\hat{f}^{I,1}\in L^{\infty}_TH^{13-2j}_{xy}\cap L^2_TH^{13-2j}_{xy}, ~j=0,\cdots,5,\\
     \pp^k_t\hat{g}^{I,1}\in L^{\infty}_TH^{14-2k}_{xy}\cap L^2_TH^{14-2k}_{xy}, ~k=0,\cdots,6.
\end{gather*}
Hence, using Proposition $\ref{regularity of I}$ with codition \eqref{compatibility condition of I1}, we can finish the proof of Lemma \ref{regularity of I1}. 
\end{proof}
Next, for the first order inner layer profiles, we have the following Lemma.  
\begin{lemma}\label{regularity of wb1ub21}
    Under the assumptions of Theorem $\ref{thm1}$,  there exists a  unique solution  $w^{b,1}$ of the problem  $\eqref{wb1}$, satisfying
    \begin{align*}
        \langle z\rangle^\ell\pp_t^j w^{b,1}\in L^\infty(0,T;H^{11-2j}_xH^1_z)\cap L^2(0,T;H^{11-2j}_xH^2_z) , 
    \end{align*}
    for all $\ell\in\mathbb{N}$ and $j=0,1\cdots,5$. Moreover, using \eqref{eq_u_p_b_2}, we have 
    \begin{gather*} 
        \langle z\rangle^\ell\pp_t^i u^{b,2}_1\in L^\infty(0,T;H^{11-2i}_xH^2_z)\cap L^2(0,T;H^{11-2i}_xH^3_z) ,\\ 
        \langle z\rangle^\ell\pp_t^j u^{b,2}_2\in L^\infty(0,T;H^{15-2j}_xH^3_z)\cap L^2(0,T;H^{15-2j}_xH^4_z),
    \end{gather*}
    for all $\ell\in\mathbb{N}$, $i=0,1,\cdots,5$ and $j=0,1\cdots,7$. 
\end{lemma}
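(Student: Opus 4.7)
The plan is to mirror the strategy used for Lemma \ref{regularity of wb0ub11}: first homogenize the boundary data of problem \eqref{wb1}, then check that the resulting source and compatibility conditions fit into Proposition \ref{regularity of b}, and finally pass from $w^{b,1}$ to $u^{b,2}_1, u^{b,2}_2$ via the explicit formulas in \eqref{eq_u_p_b_2}. Concretely, I would set $\hat{w}^{b,1}(x,z,t) := w^{b,1}(x,z,t)+\varphi(z)w^{I,1}(x,0,t)$, which satisfies a Dirichlet heat problem on $\mathbb{R}_+$ with zero initial-boundary data and source
\begin{align*}
\hat{g}^{b,1} = g^{b,1} + \varphi(z)\,\partial_t\overline{w^{I,1}} - \partial_z^2\varphi(z)\,\overline{w^{I,1}}.
\end{align*}
Here the last two contributions are harmless: using the trace inequality $\|\overline{h}\|_{H^k_x}\lesssim \|h\|_{H^{k+1}_{xy}}$ together with the regularity of $w^{I,1}$ from Lemma \ref{regularity of I1}, they lie in the right weighted $L^2_T H^{10-2j}_x L^2_z$ spaces for $j=0,\dots,5$ and any $\ell\in\mathbb{N}$.

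The substantive work is to estimate $g^{b,1}$ itself. Every term of $g^{b,1}$ splits into one of three patterns: (i) a trace of an outer profile multiplied by a boundary layer profile, e.g.\ $\overline{u^{I,1}_1}\,\partial_x w^{b,0}$; (ii) a product of two boundary layer profiles, e.g.\ $u^{b,1}_1\,\partial_x w^{b,0}$; and (iii) a polynomial-in-$z$ term coming from Taylor expansion of an outer profile, e.g.\ $\overline{\partial_y u^{I,0}_1}\,z\,\partial_x w^{b,0}$ or $\tfrac{1}{2}\overline{\partial_y^2 u^{I,0}_2}\,z^2\,\partial_z w^{b,0}$. For (i) I would move the $L^\infty_{xy}$ norm onto the trace (which costs at most $H^{k+1}_{xy}$ of the outer profile, available in abundance via Proposition \ref{prop1} and Lemma \ref{regularity of I1}), and put the boundary layer factor in $L^2_{xz}$ with the weight supplied by Lemma \ref{regularity of wb0ub11}. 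For (ii) I would use the 2D product law and the fact that both factors enjoy $\langle z\rangle^\ell$ decay, absorbing the loss into the weight. For (iii) the polynomial growth in $z$ is compensated exactly by the weighted estimates of Lemma \ref{regularity of wb0ub11}, so $\|\langle z\rangle^\ell z^k \partial_z w^{b,0}\|_{L^2_z}\lesssim \|\langle z\rangle^{\ell+k}\partial_z w^{b,0}\|_{L^2_z}$ is finite. Counting derivatives, each term of $\partial_t^j g^{b,1}$ lies in $L^\infty_T H^{10-2j}_x L^2_z \cap L^2_T H^{11-2j}_x L^2_z$ with the desired weight for $j=0,\dots,5$. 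The compatibility conditions $\partial_t^k \hat{g}^{b,1}|_{z=0}=0$ for $k=0,\dots,4$ follow by inspection: each summand either vanishes at $z=0$ because of an overall factor $w^{b,0}|_{z=0} = -\overline{w^{I,0}}$ combined with the corresponding trace of $u^{I,1}$ (which matches the definition of $w^{b,1}|_{z=0}$), or is canceled by the lift $\varphi(z)\partial_t\overline{w^{I,1}}$, with all the required identities ultimately reducing to \eqref{compa_strong} and \eqref{compatibility condition of I1}. Proposition \ref{regularity of b} applied with $m=5$ (and a slight Sobolev inclusion to land in the odd index $11-2j$) then yields the asserted regularity of $w^{b,1}$.

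Passing to $u^{b,2}_1 = 2\int_z^{\infty} w^{b,1}\,\mathrm{d}s$ is handled exactly as in the proof of Lemma \ref{regularity of wb0ub11}: differentiating in $z$ produces $w^{b,1}$ directly (good for $H^3_z$), and the $L^2_z$ norm of the undifferentiated integral is bounded by Hardy's inequality against $\|\langle z\rangle^{\ell+2} w^{b,1}\|_{L^2_z}$, which is finite since the weight is arbitrary. This transfers the $H^{11-2i}_x H^1_z$ bound on $w^{b,1}$ to an $H^{11-2i}_x H^2_z$ bound on $u^{b,2}_1$, and similarly for the $L^2_T$-with-$H^3_z$ version. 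For $u^{b,2}_2 = 2\int_z^{\infty}\int_{\tau}^{\infty} \partial_x w^{b,0}\,\mathrm{d}s\,\mathrm{d}\tau$, the same scheme applies but with two nested integrals (so Hardy is used twice) and the input is $\partial_x w^{b,0}$, which loses one $x$-derivative but enjoys the stronger count $H^{16-2j}_x$ and is available for $j=0,\dots,8$; this exactly produces $\langle z\rangle^\ell \partial_t^j u^{b,2}_2 \in L^\infty_T H^{15-2j}_x H^3_z \cap L^2_T H^{15-2j}_x H^4_z$ for $j=0,\dots,7$, the restriction to $j\le 7$ coming from the fact that $\partial_x^2 w^{b,0}$ and $\partial_z^k w^{b,0}$ must coexist in $L^2$ with enough room, which is what forces the drop from $H^{16-2j}$ to $H^{15-2j}$.

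The main obstacle I anticipate is the bookkeeping in verifying the compatibility conditions of $\hat{g}^{b,1}$: the source $g^{b,1}$ is long and every piece must vanish at $z=0$ after all the lifts are taken into account, which is a nontrivial consistency check between \eqref{wb0}, \eqref{I0}, \eqref{I1} and the explicit form of $u^{b,1}$. A secondary technical point is the systematic use of Hardy's inequality to shift decay weights between $w^{b,0},w^{b,1}$ and their $z$-antiderivatives, but this is exactly the step already exemplified in Lemma \ref{regularity of wb0ub11} and should proceed by the same template.
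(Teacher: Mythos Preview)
Your approach is correct and follows the paper's, which simply points back to the template of Lemma~\ref{regularity of wb0ub11} using Proposition~\ref{prop1}, Lemma~\ref{regularity of wb0ub11} and Lemma~\ref{regularity of I1}. One clarification on the point you flag as the main obstacle: the compatibility conditions in Proposition~\ref{regularity of b} are to be read at $t=0$ (the paper's ``$|_{z=0}$'' is misleading), and there the verification is immediate rather than a delicate cancellation at $z=0$---every summand of $\hat g^{b,1}$ contains a factor such as $w^{b,0}$, $u^{b,1}_1$, $u^{b,2}_2$, or $\overline{w^{I,1}}$ whose time derivatives at $t=0$ all vanish by the zero initial data of the boundary-layer and first-order outer profiles, this consistency being exactly what \eqref{compa_strong} and \eqref{compatibility condition of I1} guarantee.
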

\begin{proof}
    The proof is similar to that of Lemma $\ref{regularity of wb0ub11}$ by applying Proposition \ref{prop1}, Lemmas \ref{regularity of wb0ub11} and \ref{regularity of I1}. We omit it for brevity.
\end{proof} 
\begin{lemma}\label{regularity of I2}
    Under the assumptions of Theorem $\ref{thm1}$,  there exists a  unique solution  $(u^{I,2},w^{I,2})$ of problem $\eqref{I2}$  satisfying
    \begin{gather*}
         \pp_t^{5}u^{I,2}\in L^2_TL^2_{xy},~ \pp_t^i u^{I,2}\in L^\infty_TH^{9-2i}\cap L^2_TH^{10-2i},~~i=0,1\cdots,4,\\
         w^{I,2}\in L^\infty_TL^{9}_{xy},~ \pp_t^j w^{I,2}\in L^\infty_TL^{10-2j}_{xy},~~j=1,2,\cdots,5.
    \end{gather*}
\end{lemma}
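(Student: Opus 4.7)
The proof will run in exact parallel to Lemma \ref{regularity of I1}: lift the inhomogeneous Dirichlet trace of $u^{I,2}$ at $\{y=0\}$, reduce to the linear system \eqref{outer equation} with zero boundary data, and invoke Proposition \ref{regularity of I} with $m=4$. Concretely, I would construct a smooth divergence-free corrector $S^{I,2}$ whose trace equals $-u^{I,2}(x,0,t)$, mimicking the formula used for $\hat u^{I,1}$:
\begin{align*}
S^{I,2}(x,y,t):=\left(\begin{matrix}
2\left[\vphi^2(y)+\vphi^\prime(y)\int_0^y\vphi(\xi)\dd\xi\right]\int_0^{+\infty}w^{b,1}(x,s,t)\dd s \\[1mm]
-2\vphi(y)\int_0^y\vphi(\xi)\dd\xi\cdot\int_0^{+\infty}\int_\tau^{+\infty}\pp_xw^{b,0}(x,s,t)\dd s\dd\tau
\end{matrix}\right).
\end{align*}
Then $\hat u^{I,2}:=u^{I,2}+S^{I,2}$ satisfies a system of the form \eqref{outer equation} with coefficients $a=u^{I,0}$, $b=w^{I,0}$, zero initial-boundary data, and modified right-hand sides
\begin{align*}
\hat f^{I,2}=f^{I,2}+\pp_tS^{I,2}+(S^{I,2}\cdot\nabla)u^{I,0}+(u^{I,0}\cdot\nabla)S^{I,2}-\zeta\D S^{I,2},\qquad \hat g^{I,2}=g^{I,2}+(S^{I,2}\cdot\nabla)w^{I,0}-2\zeta\nabla^\perp\cdot S^{I,2}.
\end{align*}

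Next, I would verify the hypotheses of Proposition \ref{regularity of I} with $m=4$. The coefficient regularity for $u^{I,0},w^{I,0}$ is supplied by Proposition \ref{prop1} with room to spare. The Laplacian terms $\D u^{I,0}$ and $\D w^{I,0}$ appearing in $f^{I,2}$ and $g^{I,2}$ inherit $H^{16}_{xy}$-class regularity from Proposition \ref{prop1}, while the bilinear terms $(u^{I,1}\cdot\nabla)u^{I,1}$ and $(u^{I,1}\cdot\nabla)w^{I,1}$ are controlled using Lemma \ref{regularity of I1} together with the algebra property of $H^k_{xy}$ for $k\ge 2$. For the corrector $S^{I,2}$, Lemmas \ref{regularity of wb0ub11} and \ref{regularity of wb1ub21} provide the needed control through weight-trading estimates such as
\begin{align*}
\Bigl\|\int_0^{+\infty}\pp_t^j w^{b,1}(x,s,t)\dd s\Bigr\|_{H^k_x}\les\bigl\|\langle z\rangle^2\pp_t^jw^{b,1}\bigr\|_{H^k_xL^2_z},
\end{align*}
and analogously for $w^{b,0}$, which are amply satisfied within the $m=4$ budget.

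Finally, the compatibility condition \eqref{compatibility condition of I2} translates to $\pp_t^\ell\hat u^{I,2}(0)|_{y=0}=0$ for $\ell=0,1,\ldots,4$, after recursively expressing the higher time derivatives by means of \eqref{I2}. Proposition \ref{regularity of I} then delivers a unique $(\hat u^{I,2},w^{I,2})$ with the stated regularity for $\hat u^{I,2}$; subtracting $S^{I,2}$, which is at least as regular by Step 2, returns the claim for $u^{I,2}$. The main technical delicacy is in the $\pp_t^5\hat f^{I,2}\in L^2_TL^2_{xy}$ requirement: when five time derivatives hit the bilinear $(u^{I,1}\cdot\nabla)u^{I,1}$ one must partition them via Leibniz and verify each piece lands in the correct space using only the regularity of Lemma \ref{regularity of I1} (in particular, $\pp_t^7 u^{I,1}\in L^2_TL^2_{xy}$). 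This is exactly why the regularity budget in the statement drops by four spatial derivatives relative to Lemma \ref{regularity of I1}, and it is the one place where accounting must be done with some care rather than waved through as routine.
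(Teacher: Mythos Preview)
Your approach is the paper's: lift the inhomogeneous boundary trace by a divergence-free corrector $S^{I,2}$, reduce to the system \eqref{outer equation} with zero data, and invoke Proposition \ref{regularity of I} with $m=4$; the source-term bookkeeping you sketch is also along the same lines as the paper's.

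There is, however, a concrete error in your corrector. In the $(u^{I,1},w^{I,1})$ problem one has $u_2^{I,1}(x,0,t)=0$, so the $S^{I,1}$ structure only needs to lift the tangential trace. Here, by contrast,
\[
u_2^{I,2}(x,0,t)=-2\int_0^\infty\!\!\int_\tau^{+\infty}\pp_xw^{b,0}(x,s,t)\,\dd s\,\dd\tau\neq 0,
\]
and your second component $-2\vphi(y)\int_0^y\vphi(\xi)\dd\xi\cdot\int_0^{+\infty}\!\int_\tau^{+\infty}\pp_xw^{b,0}\,\dd s\,\dd\tau$ vanishes at $y=0$ (since $\int_0^0\vphi=0$), so $\hat u^{I,2}(x,0,t)\neq 0$. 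The same mismatch breaks $\di S^{I,2}=0$: differentiating your first component in $x$ produces $\int_0^\infty\pp_xw^{b,1}\,\dd s$, while differentiating your second component in $y$ produces the double integral in $w^{b,0}$, and these are unrelated. The repair is routine---e.g.\ replace the double integral in your second component by $\int_0^\infty\pp_xw^{b,1}\,\dd s$ (restoring the $S^{I,1}$-type cancellation), and add a second divergence-free lift $\big(-2\vphi'(y)\tilde B,\,2\vphi(y)\pp_x\tilde B\big)^\top$ with $\tilde B=\int_0^\infty\!\int_\tau^\infty w^{b,0}\,\dd s\,\dd\tau$ to pick up the normal trace---but the distinction from the $I,1$ construction should be flagged rather than glossed as ``mimicking''. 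One minor bookkeeping slip: with $m=4$, Proposition \ref{regularity of I} asks for $\pp_t^{4}\hat f^{I,2}\in L^2_TL^2_{xy}$, not $\pp_t^{5}$.
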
 
\begin{proof}
    Denoting \begin{align*}
        \hat{u}^{I,2}(x,y,t)&=u^{I,2}(x,y,t)+2\begin{pmatrix}
                \left[\varphi^2(y)+\varphi^\prime(y)\int^{y}_0\varphi(\xi)\dd \xi\right]  \int_0^{+\infty} w^{b,1}(x,s,t)\mathrm{d}s \\  \int_0^\infty \int_\tau^{+\infty} \partial_xw^{b,0}(x,s,t)\mathrm{d}s\mathrm{d}\tau -\varphi(y)\int^{y}_0\varphi(\xi)\dd \xi \int_0^{+\infty} w^{b,1}(x,s,t)\mathrm{d}s 
        \end{pmatrix}  \\
    &=:u^{I,2}(x,y,t)+S^{I,2}(x,y,t).
    \end{align*} 
    and using $\eqref{I1}$, we have
    \begin{equation*}
    \begin{cases}
        \pp_t  \hat{u}^{I,2}+\left( \hat{u}^{I,2}\cdot\nabla\right)u^{I,0}+\left( u^{I,0}\cdot\nabla\right)\hat{u}^{I,2}+\nabla \hat{p}^{I,2}-\zeta\D \hat{u}^{I,2}-2\zeta\nabla^\perp w^{I,2}=\hat{f}^{I,2},\\ 
        \pp_t  w^{I,2}+\left( \hat{u}^{I,2}\cdot\nabla\right)w^{I,0}+\left( u^{I,0}\cdot\nabla\right)w^{I,2}+4\zeta w^{I,2}-2\zeta\nabla^\perp\cdot \hat{u}^{I,2}=\hat{g}^{I,2},\\
        \di \hat{u}^{I,2}=0,\\
        \hat{u}^{I,2}(x,0,t)=0,~\left(\hat{u}^{I,2},w^{I,2}\right)(x,y,0)=0.
    \end{cases}
\end{equation*}
where
\begin{align*}
   &\hat{f}^{I,2}=\pp_tS^{I,2}+S^{I,2}\cdot\nabla u^{I,0}+u^{I,0}\cdot\nabla S^{I,2}-\D S^{I,2}+f^{I,2},\\
    &\hat{g}^{I,2}=S^{I,2}\cdot\nabla w^{I,0}-2\zeta\nabla^\perp\cdot S^{I,2}+g^{I,2}.
\end{align*}
Using Proposition \ref{prop1} and Lemmas \ref{regularity of wb0ub11}-\ref{regularity of wb1ub21}, a direct calculation yields that
\begin{gather*}
     \pp_t^4 \hat{f}^{I,2}\in L^2_TL^2_{xy}, ~ \pp^j_t\hat{f}^{I,2}\in L^{\infty}_TH^{8-2j}_{xy}\cap L^2_TH^{8-2j}_{xy}, ~j=0,\cdots,3,\\
    \pp^i_t\hat{g}^{I,2}\in L^{\infty}_TH^{9-2j}_{xy}\cap L^2_TH^{9-2j}_{xy}, ~i=0,\cdots,4.
\end{gather*}
Hence, using the proposition $\ref{regularity of I}$, we can finish the proof of Lemma. 
\end{proof}
\begin{remark}
    To guarantee that the problem \eqref{I2} satisfies the fourth order compatibility condition, we need to propose the following conditions on the initial data
        \begin{align}\label{compatibility condition of I2}
        \pp_t^\ell u^{I,2}(0) \Big|_{y=0}=0,~\ell=0,1,\cdots,4.
    \end{align}
    which can be represented by $(u^{I,0},w^{I,0})$ via \eqref{I0}, \eqref{I1} and \eqref{I2}. Here, we omit the details for brevity. One can refer to Remark 3.7 in \cite{Wang and Wen} for some related discuss.
\end{remark} 
Finally, for the higher order inner layer profiles, we have the following Lemma.
\begin{lemma}\label{regularity_of_wb2ub31}
   Under the assumptions of Theorem $\ref{thm1}$,  there exists a unique solution  $w^{b,2}$ of the problem  satisfying
    \begin{align*}
        \langle z\rangle^\ell\pp_t^j w^{b,2}\in L^\infty(0,T;H^{7-2j}_xH^1_z)\cap L^2(0,T;H^{7-2j}_xH^2_z) , 
    \end{align*}
    for all $\ell\in\mathbb{N}$ and $j=0,1,2,3$. Furthermore, using , we have 
    \begin{gather*} 
        \langle z\rangle^\ell\pp_t^i u^{b,3}_1\in L^\infty(0,T;H^{7-2i}_xH^2_z)\cap L^2(0,T;H^{7-2i}_xH^3_z) ,\\
        \langle z\rangle^\ell\pp_t^i u^{b,4}_2\in L^\infty(0,T;H^{6-2i}_xH^3_z)\cap L^2(0,T;H^{6-2i}_xH^4_z) ,\\
        \langle z\rangle^\ell\pp_t^j u^{b,3}_2\in L^\infty(0,T;H^{10-2j}_xH^3_z)\cap L^2(0,T;H^{10-2j}_xH^4_z) ,
    \end{gather*}
    for all $\ell\in\mathbb{N}$, $i=0,1,2,3$ and $j=0,1\cdots,5$.
\end{lemma}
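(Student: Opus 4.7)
The plan is to mirror the strategy used in Lemmas~\ref{regularity of wb0ub11} and \ref{regularity of wb1ub21}. First I would solve for $w^{b,2}$ by lifting the inhomogeneous boundary data, and then read off the regularity of $u^{b,3}_1$, $u^{b,3}_2$, $u^{b,4}_2$ from the explicit formulas \eqref{eq_u_b3_1}--\eqref{eq_u_b4_2}. Concretely, set $\hat{w}^{b,2}(x,z,t):=w^{b,2}(x,z,t)+\varphi(z)\,\overline{w^{I,2}}(x,t)$, with $\varphi$ as in \eqref{cut-off function}. Then $\hat{w}^{b,2}$ satisfies a homogeneous Dirichlet heat problem with source
\[
\hat{g}^{b,2} := g^{b,2} + \varphi(z)\,\pp_t\overline{w^{I,2}} - \varphi''(z)\,\overline{w^{I,2}},
\]
and I would apply Proposition~\ref{regularity of b} with $m$ matched to the target regularity after verifying the compatibility conditions $\pp_t^k\hat{g}^{b,2}|_{z=0,\,t=0}=0$. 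These follow from \eqref{compa_strong} together with the compatibility conditions \eqref{compatibility condition of I1}, \eqref{compatibility condition of I2} on $u^{I,1},u^{I,2}$, and the fact that $w^{I,0}(x,0,0)=w_0(x,0)=0$ by the hypothesis of Proposition~\ref{prop2}.

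The technical heart is to bound $\langle z\rangle^\ell\pp_t^j\hat{g}^{b,2}$ in $L^2_T H^{k}_x L^2_z$ with the correct $k$. The long expression for $g^{b,2}$ splits into three kinds of terms: (i) products of two boundary layer quantities such as $u^{b,1}_1\,\pp_xw^{b,1}$, controlled by Lemmas~\ref{regularity of wb0ub11}, \ref{regularity of wb1ub21} together with the fact that a product of rapidly decaying profiles is again rapidly decaying; (ii) products of a boundary layer profile with a boundary trace of an outer profile such as $u^{b,2}_1\,\overline{\pp_xw^{I,0}}$, controlled via the trace inequality $\|\bar h\|_{H^k_x}\lesssim\|h\|_{H^{k+1}_{xy}}$ derived in Lemma~\ref{regularity of wb0ub11} combined with Proposition~\ref{prop1} and Lemmas~\ref{regularity of I1}, \ref{regularity of I2}; and (iii) terms carrying polynomial factors $z^k$ multiplying $\overline{\pp_y^k u^{I,i}}$, in which the $z^k$ is absorbed into the arbitrary weight $\langle z\rangle^\ell$ because every boundary layer factor present has polynomial decay of all orders. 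The two iterated-integral terms $2\int_z^\infty(\zeta\pp_x^2u^{b,1}_1-\pp_tu^{b,1}_1)\,\mathrm{d}z$ and $-2\bigl(\int_0^z\int_\tau^\infty\pp_xw^{b,1}\,\mathrm{d}s\,\mathrm{d}\tau\bigr)\pp_zw^{b,0}$ are handled by the weighted Hardy argument used at the end of the proof of Lemma~\ref{regularity of wb0ub11}.

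Once $w^{b,2}$ is in hand, the explicit formulas \eqref{eq_u_b3_1}--\eqref{eq_u_b4_2} give $u^{b,3}_1, u^{b,3}_2, u^{b,4}_2$ directly: each $z$-integration trades a $z$-derivative for a power of the weight via the same Hardy-type step, while $x$- and $t$-derivatives pass through the integrands. Note that $u^{b,3}_2$ depends only on $w^{b,1}$ and therefore inherits the higher $x$-regularity of Lemma~\ref{regularity of wb1ub21}, which accounts for the jump from $H^{7-2j}_x$ to $H^{10-2j}_x$ in the statement. The main obstacle I foresee is organizational rather than conceptual: $g^{b,2}$ contains a large number of distinct terms, each of which must be matched to a precise spatial regularity budget, and the $1/\zeta$ prefactors in the formulas for $u^{b,3}_1$ and $u^{b,4}_2$ force one to control iterated double and triple integrals of $\zeta\pp_x^2u^{b,1}_1-\pp_tu^{b,1}_1$ (and its $x$-differentiated analogue), so two or three $x$-derivatives of $u^{b,1}_1$ must be tracked carefully against the supply of $x$-regularity available from Lemma~\ref{regularity of wb0ub11}.
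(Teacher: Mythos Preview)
Your proposal is correct and follows exactly the approach the paper intends: the paper's own proof simply reads ``similar to that of Lemma~\ref{regularity of wb0ub11} by using Proposition~\ref{prop1}, Lemmas~\ref{regularity of wb0ub11}--\ref{regularity of I2}; we omit it for brevity,'' and your outline (lift the boundary data with $\varphi$, bound the source term-by-term with the trace and weighted Hardy arguments, then read off $u^{b,3}_1,u^{b,3}_2,u^{b,4}_2$ from the explicit integral formulas) is precisely what that omission stands for. Your observation that $u^{b,3}_2$ depends only on $w^{b,1}$, explaining its higher $x$-regularity, is also on target.
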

\begin{proof}
    The proof is similar to that of Lemma $\ref{regularity of wb0ub11}$ by using  Proposition \ref{prop1}, Lemmas \ref{regularity of wb0ub11}-\ref{regularity of I2}. We omit it for brevity.
\end{proof}

\subsection{Construction of the approximate solution}
Based on the analysis in Section \ref{Asymptotic analysis}, we define
an approximate solution for the system $\eqref{uepsilon}$ as follows:
\begin{equation*} 
\begin{cases}  
    u^a(x,y,t)=u^I(x,y,t)+u^b\left(x,\frac{y}{\sqrt{\varepsilon}},t\right)+\e^\frac{3}{2}S(x,y,t),\\
    p^a(x,y,t)=p^I(x,y,t),\\
    w^a(x,y,t)=w^I(x,y,t)+w^b\left(x,\frac{y}{\sqrt{\varepsilon}},t\right),
\end{cases}  
\end{equation*}
where
\begin{gather*}
     u^I=u^{I,0}+\e^\frac{1}{2}u^{I,1}+\e u^{I,2},~~~p^I=p^{I,0}+\e^\frac{1}{2}p^{I,1} +\e p^{I,2},~~~w^I=w^{I,0}+\e^\frac{1}{2}w^{I,1}+\e w^{I,2},\\  u^b=\e^\frac{1}{2}u^{b,1}+\e u^{b,2}+\e^\frac{3}{2}u^{b,3}+\e^2(0,u_2^{b,4})^\top,~~~w^b=w^{b,0}+\e^\frac{1}{2}w^{b,1}+\e w^{b,2}.
\end{gather*}
and
\begin{align*}
    S(x,y,t)=&\left(\begin{matrix}
        -\varphi^\prime(y)\int_0^{+\infty}u^{b,2}_1(x,s,t)\dd s-\left[\varphi^2(y)+\varphi^\prime(y)\int^{y}_0\varphi(\xi)\dd \xi\right] u^{B,3}_1(x,0,t)\\
        -\varphi(y)\int_0^{+\infty}\pp_xu^{b,2}_1(x,s,t)\dd s+\varphi(y)\int^{y}_0\varphi(\xi)\dd \xi ~\pp_xu^{b,3}_1(x,0,t)
    \end{matrix}\right)\\ \notag
    &+\e^\frac{1}{2}\left(\begin{matrix}
       -\varphi^\prime(y)\int_0^{+\infty}u^{b,3}_1(x,s,t)\dd s\\
       -\varphi(y)\int_0^{+\infty}\pp_xu^{b,3}_1(x,s,t)\dd s
    \end{matrix}\right),
\end{align*}
 with $\varphi$ the cut-off function defined by \eqref{cut-off function}. Let $(u,p,w)$ be the solution of problem \eqref{uepsilon}-\eqref{uepsilon_ini_bou_cond}. Then, we define the error terms by 
\begin{equation}
    U^\e(x,y,t):=u-u^a,~~P^\e(x,y,t):=p-p^a,~~W^\e(x,y,t):=w-w^a.
\end{equation} 
Substituting $(u^a,p^a,w^a)$ into \eqref{uepsilon}-\eqref{uepsilon_ini_bou_cond}, with the help of the equations of inner and
outer layer profiles in Section  \ref{Asymptotic analysis}, we find that the error functions satisfy the following problem:
\begin{equation} \label{error equations}
\begin{cases}  
&\pp_t  U^\e+\left( U^\e \cdot \nabla \right)u^a+\left( u^a \cdot \nabla \right)U^\e+\left( U^\e \cdot \nabla \right)U^\e+\nabla P^\e-\left( \e+\zeta\right)\Delta U^\e+2\zeta \nabla^\perp W^\e=F,\\
&\pp_t  W^\e+\left( U^\e \cdot \nabla \right)w^a+\left( u^a \cdot \nabla \right)W^\e+\left( U^\e \cdot \nabla \right)W^\e+4\zeta W^\e-\e \Delta W^\e-2\zeta\nabla^\perp\cdot U^\e=G,\\
&\di U^\e=0,\\
&\left(U^\e,W^\e\right) \left(x,y,0\right)=0,~~~\left(U^\e,W^\e\right) (x,0,t)=0,
\end{cases}  
\end{equation} 
where
\begin{gather*}     
F=-\pp_t  u^a-\left( u^a \cdot \nabla \right)u^a-\nabla P^a+\left(\e+\zeta\right)\Delta u^a+2\zeta \nabla^\perp w^a,\\
G=-\pp_t  w^a-\left( u^a \cdot \nabla \right)w^a-4\zeta w^a+\e\Delta w^a+2\zeta\nabla^\perp\cdot u^a. 
\end{gather*} 
Moreover, using the equations of inner and outer layer profiles, we can split $F$ and $G$ as follows.
$$
F = \sum_{i=1}^8 F_i,~~~~G = \sum_{i=1}^{14}G_i.
$$
See $\eqref{source term F}$ and $\eqref{source term G}$ for the detailed expressions.

\section{Justification of the vanishing angular viscosity limit}\label{Proof of the theorem} 
In this section, we give the proof of Theorems \ref{thm1} and \ref{thm2}. All the constants $C$ used in the proof may depend on $\zeta, T$ and the bounds obtained in Section \ref{Regularity of boundary layer profiles}, but are independent of $\varepsilon$. Without loss of generality, we assume that $\varepsilon\in (0,1]$ in the rest of the paper. 
\subsection{Estimates of the source terms}
To begin with, we give some basic estimates of the approximate solutions which will be frequently used later. 
\begin{lemma}\label{the estimate of approximate solution}
    Under the assumptions of Theorem \ref{thm1},  there exists a constant $C$ independent of $\e$ such that
    \begin{gather*}
        \| (u^I,w^I,\pp_t  u^I,\pp_t  w^I, \nabla u^I,\nabla w^I, \nabla^2 u^I,\nabla^2 w^I,\pp_t\nabla u^I,\pp_t\nabla w^I)\|_{L^\infty_TL^\infty_{xy}}\le C,\\  
        \|( \pp_x u^b, \pp_zu^b, \pp_t\pp_x u^b, \pp_z\pp_tu^b, \pp_x^2u^b, \pp_z\pp_xu^b)\|_{L^\infty_TL^\infty_{xy}}\le C,\\
         \|( \pp_x w^b,\langle z\rangle\pp_zw^b, \pp_t\pp_x w^b,\langle z\rangle\pp_z\pp_tw^b, \pp_x^2w^b,\langle z\rangle\pp_z\pp_xw^b)\|_{L^\infty_TL^\infty_{xy}}\le C,\\
        \| (\nabla w^a,\pp_t\pp_x\nabla w^a)\|_{L^\infty_TL^2_{xy}}\le C\e^{-\frac{1}{4}},~~~\left\| \left( \nabla(w^a-w^{b,0}),\pp_t\pp_x\nabla(w^a-w^{b,0}) \right)\right\|_{L^\infty_TL^\infty_{xy}} \le C,
    \end{gather*}
    and
    \begin{gather*}
        \|S\|_{L^\infty_TH^2_{xy}}+\|\pp_tS\|_{L^\infty_TH^3_{xy}}+\|\pp_t^2S\|_{L^\infty_TH^1_{xy}}\le C.
    \end{gather*}
\end{lemma}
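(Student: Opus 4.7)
The plan is to reduce every claimed bound to the weighted anisotropic estimates already proved in Lemmas \ref{regularity of wb0ub11}--\ref{regularity_of_wb2ub31} and Propositions \ref{prop1}--\ref{regularity of I2} via (i) the decomposition of $u^a, w^a, S$ into the outer/inner/corrector pieces, (ii) the change of variable $z=y/\sqrt{\varepsilon}$, and (iii) the 2D anisotropic Sobolev embedding $H^1_xH^1_y\hookrightarrow L^\infty_{xy}$ (and its $(x,z)$ analogue). I would proceed profile-by-profile, collecting along the way the powers of $\varepsilon$ produced by the change of variable.

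First I would handle the outer piece. Each of $u^I$ and $w^I$ is a polynomial in $\sqrt{\varepsilon}$ with coefficients $u^{I,j}, w^{I,j}$. The required $L^\infty_T L^\infty_{xy}$ bounds on $u^I,w^I$ and their first/second derivatives follow from Proposition \ref{prop1} and Lemmas \ref{regularity of I1}, \ref{regularity of I2} together with $H^{k}_{xy}\hookrightarrow L^\infty_{xy}$ for $k\ge 2$; the powers $\varepsilon^{1/2}, \varepsilon$ multiplying the higher profiles are bounded by $1$ since $\varepsilon\in(0,1]$.

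Next I would handle the inner (boundary layer) pieces. For any function of the form $f(x,y/\sqrt{\varepsilon},t)$, one has the pointwise identities $\partial_x f(x,y/\sqrt{\varepsilon},t)=(\partial_x f)(x,z,t)$, $\partial_y f(x,y/\sqrt{\varepsilon},t)=\varepsilon^{-1/2}(\partial_z f)(x,z,t)$, and the $L^2$ change of variable $\|f(x,\cdot/\sqrt{\varepsilon},t)\|_{L^2_y}=\varepsilon^{1/4}\|f(x,z,t)\|_{L^2_z}$. Writing
\begin{equation*}
u^b=\varepsilon^{1/2}u^{b,1}+\varepsilon u^{b,2}+\varepsilon^{3/2}u^{b,3}+\varepsilon^2(0,u_2^{b,4})^\top,\quad w^b=w^{b,0}+\varepsilon^{1/2}w^{b,1}+\varepsilon w^{b,2},
\end{equation*}
the $L^\infty_{xy}$ norms of $\partial_x u^b, \partial_z u^b, \partial_t\partial_x u^b$, etc., reduce to $L^\infty_{xz}$ norms of the corresponding profiles, which are controlled through the anisotropic embedding $H^1_xH^1_z\hookrightarrow L^\infty_{xz}$ applied to the regularity from Lemmas \ref{regularity of wb0ub11}, \ref{regularity of wb1ub21}, \ref{regularity_of_wb2ub31}; the weight $\langle z\rangle$ appearing on $\partial_z w^b$ just uses the $\ell=1$ case of those weighted estimates.

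For the $L^2_{xy}$ bound with rate $\varepsilon^{-1/4}$, the worst term in $\nabla w^a$ is $\partial_y w^{b,0}(x,y/\sqrt\varepsilon,t)=\varepsilon^{-1/2}(\partial_z w^{b,0})(x,z,t)$, whose $L^2_{xy}$ norm equals $\varepsilon^{-1/2}\cdot\varepsilon^{1/4}\|\partial_z w^{b,0}\|_{L^2_{xz}}=\varepsilon^{-1/4}\|\partial_z w^{b,0}\|_{L^2_{xz}}$; the outer contribution and the higher-order inner contributions produce strictly better $\varepsilon$-powers. For the sharper claim $\|\nabla(w^a-w^{b,0})\|_{L^\infty_TL^\infty_{xy}}\le C$, the singular $y$-derivative of $w^{b,0}$ has been subtracted, so the remaining $y$-derivatives on the $\varepsilon^{1/2}w^{b,1}$ and $\varepsilon w^{b,2}$ terms create factors $\varepsilon^{1/2}\cdot\varepsilon^{-1/2}=1$ and $\varepsilon\cdot\varepsilon^{-1/2}=\varepsilon^{1/2}$ respectively, both bounded, and the $L^\infty_{xz}$ bounds on $\partial_z w^{b,1},\partial_z w^{b,2}$ follow again from $H^1_xH^1_z\hookrightarrow L^\infty_{xz}$ combined with the regularity lemmas. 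Finally, the bounds on $S$ follow directly from its explicit definition: $S$ is built from finite $z$-integrals of traces and derivatives of $u^{b,2}, u^{b,3}$ at $z=0$ and $z=+\infty$, multiplied by the compactly-supported cut-off $\varphi$, so the $H^k_{xy}$ and $\partial_t$-norms on $S$ reduce to boundary traces of $u^{b,k}$ at $z=0$, estimated through the weighted regularity of Lemmas \ref{regularity of wb1ub21} and \ref{regularity_of_wb2ub31}.

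The only delicate point is the careful bookkeeping of the $\varepsilon$-powers in the $\varepsilon^{-1/4}$ estimate and in the $L^\infty$ estimate after the subtraction of $w^{b,0}$; the rest is a systematic application of the embeddings and the prior regularity results, so no new analytic idea is needed beyond what has already been developed in Section \ref{Regularity of boundary layer profiles}.
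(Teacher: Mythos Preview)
Your proposal is correct and follows exactly the approach the paper takes: the paper's proof consists of a single sentence stating that the estimates follow directly from Lemmas \ref{regularity of wb0ub11}--\ref{regularity_of_wb2ub31}, H\"older and Sobolev inequalities, together with the explicit expressions of the approximate solutions. Your write-up simply unpacks those steps (the decomposition into profiles, the change of variable $z=y/\sqrt{\varepsilon}$, the anisotropic embedding $H^1_xH^1_z\hookrightarrow L^\infty_{xz}$, and the $\varepsilon$-bookkeeping for the $\varepsilon^{-1/4}$ term and the subtraction of $w^{b,0}$), which the paper omits for brevity.
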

\begin{proof}
    The proof can be completed by directly using the Lemma \ref{regularity of wb0ub11}-\ref{regularity_of_wb2ub31}, H\"older and Sobolev inequalities with the expressions of  the approximate solutions. We
omit the details for brevity.
\end{proof}
Next, for the source term $F$, we have the following estimate.
\begin{lemma}\label{estimate of F}
Under the assumptions of Theorem \ref{thm1},  then here exists a constant $C$ independent of $\e$ such that
\begin{align}
        \left\|  \left(F,\pp_x F,\pp_t F,\pp_t\pp_x F\right) \right\|_{L^\infty_TL^2_{xy}}\le C \e^\frac{5}{4}.
\end{align}
\end{lemma}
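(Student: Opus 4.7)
The plan is to substitute the approximate solution $(u^a, p^a, w^a)$ directly into the expression for $F$, then exploit the fact that the outer profiles $(u^{I,j}, p^{I,j}, w^{I,j})$ and boundary layer profiles $(u^{b,j}, w^{b,j})$ were constructed precisely so that the terms of order $\varepsilon^0$, $\varepsilon^{1/2}$ and $\varepsilon^1$ in $F$ cancel identically. After regrouping by powers of $\varepsilon^{1/2}$, separating bulk contributions from boundary layer contributions, and moving all surviving terms into the decomposition $F=\sum_{i=1}^{8} F_i$ (the full list appearing in Appendix B of the paper), the residual consists of: (i) bulk contributions coming from $\varepsilon^{3/2} S$ and from the $\varepsilon \Delta u^{I,j}$ terms that are not absorbed into the outer profile equations; (ii) boundary layer contributions at order $\varepsilon^{3/2}$ or higher, such as $\varepsilon^{3/2}\partial_t u^{b,3}$, $\varepsilon^2 \partial_t(0,u^{b,4}_2)^\top$, the tail of the Taylor expansion of $u^I(x,y,t)$ around $y=0$ beyond second order multiplied by inner profiles, and mixed convective terms like $\varepsilon^{3/2}(u^{b,1}\cdot\nabla)u^{b,2}$; (iii) truncation remainders from the $\nabla^\perp w^a$ coupling at orders higher than those matched.

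The key quantitative tool is the standard scaling identity
\begin{equation*}
\bigl\|\varepsilon^{\alpha} h\!\left(x,y/\sqrt{\varepsilon},t\right)\bigr\|_{L^2_{xy}} = \varepsilon^{\alpha+1/4}\|h(x,z,t)\|_{L^2_{xz}},
\end{equation*}
together with its weighted analogue $\|\varepsilon^{\alpha} h(x,y/\sqrt{\varepsilon},t)\|_{L^2_{xy}}\lesssim \varepsilon^{\alpha+1/4}\|\langle z\rangle^{-1}\|_{L^2_z}\|\langle z\rangle h\|_{L^2_xL^\infty_z}$ when a polynomial weight in $y=\sqrt{\varepsilon}z$ is present (e.g., coming from the Taylor remainders $\tfrac12\overline{\partial_y^2 u^{I,0}}y^2 = \tfrac12\varepsilon\overline{\partial_y^2 u^{I,0}}z^2$). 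Applying this identity systematically, every pure bulk residual turns out to be of size $O(\varepsilon^{3/2})$ in $L^2_{xy}$, and every boundary-layer residual of formal order $\varepsilon^{k}$ in $L^\infty$ becomes $O(\varepsilon^{k+1/4})$ in $L^2_{xy}$, so the worst terms (those at formal order $\varepsilon$ in the inner region, typically Taylor-remainder interactions like $\varepsilon\overline{\partial_y^2 u^{I,0}}z^2\partial_z w^{b,0}$ or residuals produced by truncating the $u^{b,3}, u^{b,4}$ series in $(u^a\cdot\nabla)u^a$ and in $\varepsilon\Delta u^a$) yield exactly the claimed rate $\varepsilon^{5/4}$.

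With this in hand, I would estimate each $F_i$ in $L^\infty_T L^2_{xy}$ using Hölder's inequality, Lemma~\ref{the estimate of approximate solution} for the $L^\infty_{x,y,t}$ bounds on the profiles, and the anisotropic regularity from Lemmas \ref{regularity of wb0ub11}--\ref{regularity_of_wb2ub31}, which provide $L^\infty_T H^m_x H^\ell_z$ bounds with arbitrary polynomial decay $\langle z\rangle^\ell$ needed to control the Taylor weights. The estimates for $\partial_x F$, $\partial_t F$, and $\partial_t\partial_x F$ proceed identically, since differentiating in $x$ or $t$ commutes with the change of variable $y\mapsto z$ and preserves the $\varepsilon$-power; the regularity indices in Lemmas~\ref{regularity of wb0ub11}--\ref{regularity_of_wb2ub31} are exactly dimensioned so that one tangential $x$-derivative and one $t$-derivative can be absorbed with room to spare.

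The main obstacle is bookkeeping rather than analysis: there are many terms (eight groups $F_i$ with further subtemrs) and the formal cancellations at orders $\varepsilon^0$, $\varepsilon^{1/2}$, $\varepsilon$ must be verified carefully, keeping track of (a) the Taylor expansions of $u^I, w^I$ around $y=0$ used to evaluate outer profiles against boundary layer profiles, (b) the non-local definitions \eqref{eq_u_b_1_p}, \eqref{eq_u_p_b_2}, \eqref{eq_u_b3_1}, \eqref{eq_u_b4_2} of $u^{b,j}$ via integrals of $w^{b,k}$, and (c) the role of the corrector $S$, which is introduced precisely to cure the non-decaying tangential boundary values produced by truncating $u^{b,3}_1$ and $u^{b,4}_2$. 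Once the cancellations are verified and the surviving terms are organized, estimating each in $L^2_{xy}$ via the scaling identity above is straightforward, and summing gives the bound $C\varepsilon^{5/4}$.
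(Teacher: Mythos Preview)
Your proposal is correct and follows essentially the same approach as the paper: decompose $F=\sum_{i=1}^8 F_i$ via the expressions in Appendix~B, estimate each $F_i$ in $L^\infty_TL^2_{xy}$ using the scaling identity $\|h(\cdot,\cdot/\sqrt{\varepsilon},\cdot)\|_{L^2_{xy}}=\varepsilon^{1/4}\|h\|_{L^2_{xz}}$, Taylor remainders for the outer--inner interaction terms, and the regularity from Lemmas~\ref{regularity of wb0ub11}--\ref{regularity_of_wb2ub31} and~\ref{the estimate of approximate solution}, then observe that $\partial_x,\partial_t,\partial_t\partial_x$ are tangential and so the same argument applies verbatim. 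One small inaccuracy: the example you cite, $\varepsilon\,\overline{\partial_y^2 u^{I,0}}\,z^2\partial_z w^{b,0}$, actually lives in $G$ rather than $F$; the critical $\varepsilon^{5/4}$ contributions in $F$ come from terms such as $F_1=\varepsilon\,\partial_t u^{b,2}+\cdots$ and the Taylor-remainder convective term $F_2$, but this does not affect the argument.
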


\begin{proof}
Thanks to \eqref{source term F},
we can estimate $F$ term by term. For $F_1$, using Lemmas \ref{regularity of wb1ub21}, \ref{regularity_of_wb2ub31} and \ref{the estimate of approximate solution}, we have 
    \begin{align*} 
       \notag \left\|  F_1 \right\|_{L^\infty_TL^2_{xy}}\le&\e \left\|   \pp_tu^{b,2}\right\|_{L^\infty_TL^2_{xy}}+\e^\frac{3}{2}\left\|  \pp_tu^{b,3} \right\|_{L^\infty_TL^2_{xy}}+\e^\frac{3}{2}\left\|  \pp_tS \right\|_{L^\infty_TL^2_{xy}}+\e^2\left\|  \pp_tu_2^{b,4} \right\|_{L^\infty_TL^2_{xy}}\\
       \notag  \le&\e^\frac{5}{4}\left\|   \pp_tu^{b,2}\right\|_{L^\infty_TL^2_xL^2_z}+\e^\frac{7}{4}\left\|   \pp_tu^{b,3}\right\|_{L^\infty_TL^2_{x}L^2_z}+\e^\frac{3}{2}\left\|  \pp_tS \right\|_{L^\infty_TL^2_{xy}}+\e^\frac{9}{4}\left\|  \pp_tu^{b,4}_2 \right\|_{L^\infty_TL^2_{x}L^2_z}\\
        \le&C\e^\frac{5}{4}.
    \end{align*}
    For $F_2$, we use the Taylor's  formula,
    \begin{align*}
        \notag \left\|  F_2 \right\|_{L^\infty_TL^2_{xy}}\le& \left\|   \frac{u^{I,0}_1(x,y,t)-u^{I,0}_1(x,0,t)}{y}\e^\frac{1}{2}z\e^\frac{1}{2}\pp_xu^{b,1}\right\|_{L^\infty_TL^2_{xy}}\\
        \notag &+\left\|   \frac{u^{I,0}_2(x,y,t)-u^{I,0}_2(x,0,t)-\pp_yu^{I,0}_2(x,0,t)y}{\frac{1}{2}y^2}\frac{1}{2}z^2\e\pp_zu^{b,1}\right\|_{L^\infty_TL^2_{xy}}\\
        \notag \le& C\e^\frac{5}{4} \left\|  u^{I,0}_1 \right\|_{L^\infty_TH^3_{xy}}\left\|  \langle z\rangle u^{b,1} \right\|_{L^\infty_TH^1_xL_{z}^2}+C\e^\frac{5}{4} \left\|  u^{I,0}_2 \right\|_{L^\infty_TH^4_{xy}}\left\|  \langle z\rangle^2 u^{b,1} \right\|_{L^\infty_TL^2_xH_{z}^1}\\
        \le&C\e^\frac{5}{4}.
    \end{align*}
A similar treatment yields that 
\begin{align*}
    \left\|  F_3 \right\|_{L^\infty_TL^2_{xy}}\le & \e^\frac{5}{4}\left\|   u^{I, 0}_1\right\|_{L^\infty_T L^{\infty}_{xy}}\left\|  u^{b,2} \right\|_{L^\infty_TH^1_xL_{z}^2}+\e^\frac{5}{4}\left\|   u^{I, 0}_2\right\|_{L^\infty_T L^{\infty}_{xy}}\left\|  u^{b,2} \right\|_{L^\infty_TL_{x}^2H^1_z}\\
    &+\e^\frac{5}{4}\left\|   u^{I, 0}\right\|_{L^\infty_T L^{\infty}_{xy}}\Big(
\left\|  u^{b,3} \right\|_{L^\infty_TH^1_xL^2_z}+\left\|  u^{b,3} \right\|_{L^\infty_TL^2_xH^1_z}+\e^\frac{1}{4}\left\| S  \right\|_{L^\infty_TL^2_{xy}}\\
&~~~~+\e^\frac{1}{2}\left\|  u^{b,4}_2 \right\|_{L^\infty_TH^1_xL^2_z}+\e^\frac{1}{2}\left\|  u^{b,4}_2 \right\|_{L^\infty_TL^2_xH^1_z}\Big)\\
\le&C\e^\frac{5}{4}.
\end{align*}
Using Proposition \ref{prop1}, Lemmas \ref{regularity of wb0ub11}, \ref{regularity of I1}, \ref{regularity of wb1ub21}, \ref{regularity of I2}, \ref{regularity_of_wb2ub31} and \ref{the estimate of approximate solution}, we have 
\begin{align*}
  \left\|  F_4 \right\|_{L^\infty_TL^2_{xy}}\le&\e^\frac{5}{4} \left\| u^{I,1} \right\|_{L^\infty_TL^{\infty}_{xy}} \Big( \left\|   u^{I,2}\right\|_{L^\infty_TH^1_{xy}}+\left\|u^{b,2}   \right\|_{L^\infty_TH^1_xL_{z}^2}+\left\|u^{b,2}   \right\|_{L^\infty_TL_{x}^2H^1_z}+\e^\frac{1}{2}\left\|u^{b,3}   \right\|_{L^\infty_TH^1_xL_{z}^2}\\
  &+\e^\frac{1}{2}\left\|u^{b,3}   \right\|_{L^\infty_TL_{x}^2H^1_z}+\left\| S  \right\|_{L^\infty_TL^2_{xy}}+\e\left\|u^{b,4}_2   \right\|_{L^\infty_TH^1_xL_{z}^2}+\e\left\|u^{b,4}_2   \right\|_{L^\infty_TL_{x}^2H^1_z}\Big)\\
  \le&C\e^\frac{5}{4}.
\end{align*}
Using the defination of $u^a$, it is easy to get $$\left\|\pp_x(u^a-u^{I,0}) \right\|_{L^\infty_TL^{\infty}_{xy}}\le C\e^\frac{1}{2}.$$
Hence, for $F_5$, we have
\begin{align*}
    \left\|  F_5 \right\|_{L^\infty_TL^2_{xy}}\le&\e^\frac{5}{4}\left\|  u^{I,0} \right\|_{L^\infty_TH^2_{xy}}\left\| \langle z\rangle u^{b,1}  \right\|_{L^\infty_TL^2_xL^2_z}+\e^\frac{3}{4}\left\|  \pp_x(u^a-u^{I,0}) \right\|_{L^\infty_TL^{\infty}_{xy}}\left\| u^{b,1}  \right\|_{L^\infty_TL^2_xL^2_z}\\
    \le&C\e^\frac{5}{4}.
\end{align*}
A similar argument yields that
\begin{align*}
    \left\|  F_6 \right\|_{L^\infty_TL^2_{xy}}\le&\e^\frac{5}{4}\left\| \nabla u^a  \right\|_{L^\infty_TL^{\infty}_{xy}}\Big(\left\|u^{b,2}   \right\|_{L^\infty_TL^2_xL^2_z}+\e^\frac{1}{2}\left\|u^{b,3}   \right\|_{L^\infty_TL^2_xL^2_z}\\
    &+\e^\frac{1}{4}\left\| S  \right\|_{L^\infty_TL^2_{xy}}+\e\left\|u^{b,4}_2   \right\|_{L^\infty_TL^2_xL^2_z}\Big)\\
    \le&C\e^\frac{5}{4}.
\end{align*}
A similar treatment yields that, 
\begin{align*}
    \left\|  F_7 \right\|_{L^\infty_TL^2_{xy}}\le&\e^\frac{5}{4}\Big(\e^\frac{1}{4}
\left\| u^{I,1}  \right\|_{L^\infty_TH^2_{xy}}+\e^\frac{1}{2}\left\| u^{b,1}  \right\|_{L^\infty_TH^2_xL_{z}^2}+\e^\frac{3}{4}\left\| u^{I,2}  \right\|_{L^\infty_TH^2_{xy}} \\
&+\e\left\| u^{b,2}  \right\|_{L^\infty_TH^2_xL_{z}^2}+\e^\frac{3}{2}\left\| u^{b,3}  \right\|_{L^\infty_TH^2_xL_{z}^2}+\e^\frac{1}{2}\left\| u^{b,3}  \right\|_{L^\infty_TL_{x}^2H^2_z} \\
&+\e^\frac{5}{4}\left\|   S\right\|_{L^\infty_TH^2_{xy}}+\e^2\left\| u^{b,4}_2  \right\|_{L^\infty_TH^2_xL_{z}^2}+\e\left\| u^{b,4}_2  \right\|_{L^\infty_TL_{x}^2H^2_z} \Big)\\
\le&C\e^\frac{5}{4}.
\end{align*}
Finally, we can estimate $F_8$ as follows.
\begin{align*}
     \left\|  F_8 \right\|_{L^\infty_TL^2_{xy}}\le&\zeta\e^\frac{5}{4}\Big(\left\| u^{b,2}  \right\|_{L^\infty_TH^2_xL_{z}^2}+\e^\frac{1}{2}\left\| u^{b,3}  \right\|_{L^\infty_TH^2_xL_{z}^2}+\e^\frac{1}{4}\left\|   S\right\|_{L^\infty_TH^2_{xy}}\\
     &+\e\left\| u^{b,4}_2  \right\|_{L^\infty_TH^2_xL_{z}^2}+\left\| u^{b,4}_2  \right\|_{L^\infty_TL_{x}^2H^2_z}\Big)+\zeta\e^\frac{5}{4}\left\|   w^{b,2}\right\|_{L^\infty_TH^1_xL_{z}^2}\\
     \le&C\e^\frac{5}{4}.
\end{align*}
Combining the estimates of $F_1,\cdots,F_8$, we have
\begin{align*}
    \left\| F  \right\|_{L^\infty_TL^2_{xy}}=\sum_{i=1}^{8}\left\| F_i  \right\|_{L^\infty_TL^2_{xy}}\le C\e^\frac{5}{4}.
\end{align*}
Noticing that $\pp_x F, \pp_t F, \pp_t\pp_x F$ only involving the time 
derivative $\partial_t$ and the tangential derivative $\partial_x$, we can prove the estimates of $\pp_x F, \pp_t F, \pp_t\pp_x F$ in a similar way. 
Here, we omit the details for brevity. The proof is complete.
\end{proof}
Finally, for the source term $G$, we have the following estimates.
\begin{lemma}\label{estimate of G}  
Under the assumptions of Theorem \ref{thm1},  there exists a constant $C$ independent of $\e$ such that
\begin{align}
        \left\|  \left(G,\pp_x G,\pp_tG,\pp_t\pp_x G\right) \right\|_{L^\infty_TL^2_{xy}}\le C \e^\frac{5}{4}.
\end{align}
\end{lemma}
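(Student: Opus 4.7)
The plan is to mirror the proof of Lemma \ref{estimate of F} piece by piece. By construction $G=\sum_{i=1}^{14}G_i$, whose explicit expression is recorded in Appendix \ref{Appendix B} (see \eqref{source term G}). Each summand fits one of three patterns: (i) a product of outer profiles $(u^{I,j},w^{I,j})$ multiplied by an $\e$-prefactor whose $L^2_{xy}$-size is already $\e^{5/4}$ or smaller; (ii) a product involving at least one boundary layer factor $w^{b,j}$ or $u^{b,j}$ with the appropriate prefactor; or (iii) a Taylor-remainder term arising from evaluating an outer profile at small $y$ against a boundary layer function, i.e.\ expressions of the form
\[
\frac{u^{I,k}(x,y,t)-\sum_{m=0}^{\ell-1}\tfrac{1}{m!}\pp_y^m u^{I,k}(x,0,t)\,y^m}{y^\ell}\cdot z^{\ell}\,\pp_x^{a}\pp_z^{b}w^{b,j}.
\]

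For type (i) terms, Proposition \ref{prop1} and Lemmas \ref{regularity of I1}, \ref{regularity of I2} supply ample spatial and temporal regularity, so the required bound follows from H\"older's inequality. For type (ii) terms, the change of variable $z=y/\sqrt{\e}$ converts an $L^2_y$-norm of a boundary layer function to $\e^{1/4}$ times an $L^2_z$-norm; combined with the weighted estimates on $\langle z\rangle^{\ell}\pp_t^j w^{b,i}$ and $\langle z\rangle^{\ell}\pp_t^j u^{b,i}$ from Lemmas \ref{regularity of wb0ub11}, \ref{regularity of wb1ub21}, \ref{regularity_of_wb2ub31}, and the basic estimates of Lemma \ref{the estimate of approximate solution}, this yields the desired $\e^{5/4}$ bound. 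For type (iii) terms, Taylor's theorem expresses the remainder as $y^\ell$ times a bounded coefficient, and the matching factor $z^\ell=(y/\sqrt\e)^\ell$ is absorbed by the polynomial weight $\langle z\rangle^\ell$ in the boundary layer estimates.

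The structural input that makes this work is that all $O(1)$, $O(\e^{1/2})$, and $O(\e)$ contributions produced by plugging $(u^a,w^a)$ into the defining expression for $G$ cancel exactly by construction, thanks to the outer equations \eqref{I0}, \eqref{I1}, \eqref{I2} and the boundary layer problems \eqref{wb0}, \eqref{wb1}, \eqref{wb2}. Only $O(\e^{5/4})$-or-smaller remainders survive. The principal bookkeeping challenge lies in the parabolic term $\e\Delta w^a$: the normal-derivative part $\e\pp_y^2 w^b(x,y/\sqrt{\e},t)=\pp_z^2 w^b$ is absorbed by the heat structure defining the $w^{b,j}$, while the tangential residual $\e\pp_x^2 w^{b,j}$ is genuinely $O(\e)$ in pointwise size but carries the boundary layer scaling, hence contributes at $L^2_{xy}$-size $\e^{5/4}$. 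Analogous care is needed for the coupling terms $(u^a\cdot\nabla)w^a$ and $2\zeta\nabla^\perp\cdot u^a$, where the correctors $u^{b,3}_1$ and $u^{b,4}_2$ defined via \eqref{eq_u_b3_1}--\eqref{eq_u_b4_2} were introduced precisely to cancel the $O(\e)$ contributions; tracking these cancellations term by term is where most of the work sits.

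Finally, the bounds on $\pp_x G$, $\pp_t G$, and $\pp_t\pp_x G$ are obtained by the same scheme, since neither $\pp_x$ nor $\pp_t$ interacts with the boundary layer scaling $y\mapsto y/\sqrt\e$ and hence does not introduce any loss in $\e$. The time-regularity budgets provided by Lemmas \ref{regularity of wb0ub11}, \ref{regularity of wb1ub21}, \ref{regularity_of_wb2ub31} ($\pp_t^j$ up to $j=8,5,3$ respectively, with compatible weight $\langle z\rangle^\ell$) and by Lemmas \ref{regularity of I1}, \ref{regularity of I2} for the outer profiles are more than sufficient to absorb at most two tangential derivatives and one time derivative applied to $G$. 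I therefore expect the proof to consist of fourteen short H\"older estimates following the template of Lemma \ref{estimate of F}, and I would omit the repetitive details for brevity.
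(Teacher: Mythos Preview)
Your proposal is correct and follows essentially the same approach as the paper: the paper carries out precisely the term-by-term H\"older estimates on $G_1,\dots,G_{14}$ that you describe, using Taylor remainders for the outer-minus-trace terms, the $\e^{1/4}$ gain from the $y\mapsto z$ change of variables on boundary layer factors, the weighted bounds of Lemmas \ref{regularity of wb0ub11}--\ref{regularity_of_wb2ub31}, and Lemma \ref{the estimate of approximate solution}, then remarks that $\pp_xG,\pp_tG,\pp_t\pp_xG$ follow identically since tangential and time derivatives commute with the fast-variable scaling.
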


\begin{proof}
    For $G_1$ and $G_2$, similar to the estimate of $F_2$, we have
    \begin{align*}
        \left\|  (G_1 ,  G_2) \right\|_{L^\infty_TL^2_{xy}}\le& \e^\frac{5}{4}
\left\|  \pp_y^2u^{I,0} \right\|_{L^\infty_TL^{\infty}_{xy}}\left\| \langle z \rangle ^2w^{b,0}  \right\|_{L^\infty_TH^1_xL_{z}^2}+\e^\frac{5}{4}
\left\|  \pp_y^3u^{I,0} \right\|_{L^\infty_TL^{\infty}_{xy}}\left\| \langle z \rangle ^3w^{b,0}  \right\|_{L^\infty_TL_{x}^2H^1_z}\\
        \le& C\e^\frac{5}{4}.
    \end{align*}
Based on the regularity of boundary profiles, we can give 
    \begin{align*}
        \left\|  G_3 \right\|_{L^\infty_TL^2_{xy}}\le&\e^\frac{5}{4}\left\|   \pp_y u^{I,0}\right\|_{L^\infty_TL^{\infty}_{xy}}\left\| 
\langle z \rangle w^{b,1}  \right\|_{L^\infty_TH^1_xL_{z}^2}+\e^\frac{5}{4}\left\|   \pp_y^2 u^{I,0}\right\|_{L^\infty_TL^{\infty}_{xy}}\left\| 
\langle z \rangle^2 w^{b,1}  \right\|_{L^\infty_TL_{x}^2H^1_z}\\
        \le& C\e^\frac{5}{4}.
    \end{align*}
A similar argument yields that
    \begin{align*}
        \left\|  G_4 \right\|_{L^\infty_TL^2_{xy}}\le&\e^\frac{5}{4}\left\|   u^{I,0}\right\|_{L^\infty_TL^{\infty}_{xy}}\left\|  w^{b,2} \right\|_{L^\infty_TH^1_xL_{z}^2}+\e^\frac{5}{4}\left\|   \pp_y u^{I,0} \right\|_{L^\infty_TL^{\infty}_{xy}}\left\| \langle z \rangle w^{b,2}  \right\|_{L^\infty_TL^2_xH_{z}^1}\\
        &+\e^\frac{5}{4}\left\|   \pp_y u^{I,1} \right\|_{L^\infty_TL^{\infty}_{xy}}\left\| \langle z \rangle w^{b,0}  \right\|_{L^\infty_TH_{x}^1L^2_z}\\
        \le& C\e^\frac{5}{4}.
    \end{align*}
A similar treatment yields that
    \begin{align*}
        \left\|  G_5 \right\|_{L^\infty_TL^2_{xy}}\le&\e^\frac{5}{4}\left\|   \pp_y^2u^{I,1}\right\|_{L^\infty_TL^{\infty}_{xy}}\left\|   \langle z \rangle ^2w^{b,0}\right\|_{L^\infty_TL^2_xH_{z}^1}+\e^\frac{5}{4}\left\|   u^{I,1}\right\|_{L^\infty_TL^{\infty}_{xy}}\left\|  w^{b,1} \right\|_{L^\infty_TH^1_xL_{z}^2}\\
        \le& C\e^\frac{5}{4}.
    \end{align*}
    Moreover, for $G_6$, we have
    \begin{align*}
        \left\|  G_6 \right\|_{L^\infty_TL^2_{xy}}\le&\e^\frac{5}{4}
\left\|  \pp_yu^{I,1 }  \right\|_{L^\infty_TL^{\infty}_{xy}}
\left\| 
\langle z \rangle w^{b,1}  \right\|_{L^\infty_TL^2_xH_{z}^1}+
\e^\frac{7}{4}  \left\|  u^{I,1 }  \right\|_{L^\infty_TL^{\infty}_{xy}}
\left\| 
 w^{b,2}  \right\|_{L^\infty_TH_{x}^1L^2_z}\\
 &+\e^\frac{5}{4}\left\|  u^{I,1 }  \right\|_{L^\infty_TL^{\infty}_{xy}}
\left\| 
 w^{b,2}  \right\|_{L^\infty_TL^2_xH_{z}^1}\\
 \le&C\e^\frac{5}{4}.
    \end{align*}
A similar argument yields that
    \begin{align*}
        \left\|  G_7 \right\|_{L^\infty_TL^2_{xy}}\le&\e^\frac{5}{4}\left\|   \pp_y\pp_xw^{I,0}\right\|_{L^\infty_TL^{\infty}_{xy}}\left\|  \langle z \rangle u^{b,1} \right\|_{L^\infty_TL^2_xL^2_z}+\e^\frac{7}{4}\left\| u^{b,1}  \right\|_{L^\infty_TL^2_xL^2_z}\left\|  \pp_xw^{I,2} \right\|_{L^\infty_TL^{\infty}_{xy}}\\
        &+\e^\frac{5}{4}\left\|   u^{b,1}\right\|_{L^\infty_TL^{\infty}_{x}L^\infty_z}\left\|   w^{b,2}\right\|_{L^\infty_TH^1_xL_{z}^2}\\
        \le&C\e^\frac{5}{4}.
    \end{align*}
Next, using the regularity of boundary profiles, we have
    \begin{align*}
        \left\|  G_8 \right\|_{L^\infty_TL^2_{xy}}\le&\e^\frac{5}{4}\left\|  u^{I,2} \right\|_{L^\infty_TL^{\infty}_{xy}}\left\| w^{b,0}  \right\|_{L^\infty_TH^1_xL_{z}^2}+\e^\frac{5}{4}\left\|  \pp_y^2u^{I,2} \right\|_{L^\infty_TL^{\infty}_{xy}}\left\| \langle z \rangle w^{b,0}  \right\|_{L^\infty_TL_{x}^2H^1_z}\\
        &+\e^\frac{7}{4}\left\|   u^{I,2}\right\|_{L^\infty_TL^{\infty}_{xy}}\left\| w^{b,1}  \right\|_{L^\infty_TH^1_xL_{z}^2}\\
        \le&C\e^\frac{5}{4}.
    \end{align*}
A similar argument yields that
    \begin{align*}
        \left\|  G_9 \right\|_{L^\infty_TL^2_{xy}}\le&\e^\frac{5}{4}\left\|  u^{I,2} \right\|_{L^\infty_TL^{\infty}_{xy}}\left\|  w^{b,1} \right\|_{L^\infty_TL^2_xH_{z}^1}+\e^\frac{3}{2}\left\|  u^{I,2} \right\|_{L^\infty_TL^{\infty}_{xy}}\left( \left\|  \nabla w^{I,1} \right\|_{L^\infty_TL^2_{xy}}+\e^\frac{1}{2}\left\|  \nabla w^{I,2} \right\|_{L^\infty_TL^2_{xy}}\right)\\
        &+\e^\frac{9}{4}\left\| u^{I,2}  \right\|_{L^\infty_TL^{\infty}_{xy}}\left\|  w^{b,2} \right\|_{L^\infty_TH_{x}^1L^2_z}+\e^\frac{7}{4}\left\| u^{I,2}  \right\|_{L^\infty_TL^{\infty}_{xy}}\left\|  w^{b,2} \right\|_{L^\infty_TL^2_xH_{z}^1}\\
        \le&C\e^\frac{5}{4}.
    \end{align*}
A similar treatment yields that 
    \begin{align*}
        \left\|  G_{10} \right\|_{L^\infty_TL^2_{xy}}\le&\e^\frac{5}{4}\left\|  u^{b,2} \right\|_{L^\infty_TL^2_xL^2_z}\left(\left\|  \pp_xw^{I,0} \right\|_{L^\infty_TL^{\infty}_{xy}}+\left\|  \pp_yw^{I,0} \right\|_{L^\infty_TL^{\infty}_{xy}}\right)\\
        \le&C\e^\frac{5}{4}.
    \end{align*}
    For $G_{11}$, similarly, we have
    \begin{align*}
        \left\|  G_{11} \right\|_{L^\infty_TL^2_{xy}}\le&\e^\frac{7}{4}\left\|   u^{b,2}\right\|_{L^\infty_TL^\infty_xL^\infty_z}\left\| w^{b,1}  \right\|_{L^\infty_TH^1_xL_{z}^2}+\e^\frac{7}{4}\left\|u^{b,2}\right\|_{L^\infty_TL^2_xL^2_z}\Big(\left\| \nabla w^{I,1}  \right\|_{L^\infty_TL^{\infty}_{xy}}\\
        &+\e^\frac{1}{2}\left\| \nabla w^{I,2}  \right\|_{L^\infty_TL^{\infty}_{xy}}\Big)+\e^\frac{9}{4}\left\|   u^{b,2}\right\|_{L^\infty_TL^\infty_xL^\infty_z}\left\| w^{b,2}  \right\|_{L^\infty_TH^1_xL_{z}^2}\\
        &+\e^\frac{7}{4}\left\|   u^{b,2}\right\|_{L^\infty_TL^\infty_xL^\infty_z}\left\| w^{b,2}  \right\|_{L^\infty_TL_{x}^2H^1_z}+\e^\frac{7}{4}\left\|  u^{b,3} \right\|_{L^\infty_TL^2_xL^2_z}
\left\|   \pp_x w^{b,0}\right\|_{L^\infty_TL^\infty_xL^\infty_z}\\
        \le&C\e^\frac{7}{4}.
    \end{align*}
    For $G_{12}$, using the Lemma \ref{the estimate of approximate solution}, we have
    \begin{align*}
        \left\|  G_{12} \right\|_{L^\infty_TL^2_{xy}}\le&\e^\frac{7}{4}\left\| u^{b,3}  \right\|_{L^\infty_TL^2_xL^2_z}\left\|\nabla (w^a-w^{b,0})   \right\|_{L^\infty_TL^\infty_{xy}}+\e^\frac{3}{2}\left\|  S \right\|_{L^\infty_TL^{\infty}_{xy}}\left\|  \nabla w^a \right\|_{L^\infty_TL^2_{xy}}\\
        &+\e^2\left\| u^{b,4}_2  \right\|_{L^\infty_TL^\infty_xL^\infty_z}\left\|  \nabla w^a \right\|_{L^\infty_TL^2_{xy}}+\e^\frac{5}{4}\left\| u^{b,3}_2  \right\|_{L^\infty_TL^\infty_xL^\infty_z}\left\|   w^{b,0}\right\|_{L^\infty_TL^2_xH_{z}^1}\\
        \le&C\e^\frac{3}{2}.
    \end{align*}
A similar argument yields that
    \begin{align*}
        \left\|  G_{13} \right\|_{L^\infty_TL^2_{xy}}\le& \e^\frac{3}{2}\Big( \left\|  w^{I,1}\right\|_{L^\infty_TH^2_{xy}}+\e^\frac{1}{4}
\left\|  w^{b,1} \right\|_{L^\infty_TH^2_xL_{z}^2}+\e^\frac{1}{2}  \left\|  w^{I,2}\right\|_{L^\infty_TH^2_{xy}}+\e^\frac{3}{4}
\left\|  w^{b,2} \right\|_{L^\infty_TH^2_xL_{z}^2}
\Big)\\
        \le&C\e^\frac{3}{2}.
    \end{align*}
A similar treatment yields that
    \begin{align*}
        \left\|  G_{14} \right\|_{L^\infty_TL^2_{xy}}\le&2\zeta\e^\frac{3}{2}\left(\e^\frac{1}{4}\left\|  u^{b,3}_2 \right\|_{L^\infty_TH^1_xL_{z}^2}+\left\|  \nabla S\right\|_{L^\infty_TL^2_{xy}}+\e^\frac{1}{4}\left\| u^{b,4}_2  \right\|_{L^\infty_TL^2_xH_{z}^1}\right)\\
        \le&C\e^\frac{3}{2}.
    \end{align*}
Combining $G_i$, we have
\begin{align*}
    \left\| G  \right\|_{L^\infty_TL^2_{xy}}=\sum_{i=1}^{14}\left\| G_i  \right\|_{L^\infty_TL^2_{xy}}\le C\e^\frac{5}{4}.
\end{align*}
The estimates of  $\pp_x G, \pp_t G, \pp_t\pp_x G$ can be deduced in a similar way.    
 We  omit the details for  brevity.
\end{proof}
\subsection{Estimates for the error terms}\label{Estimates_error} 
To begin with, we have the following $L^\infty_TL^2_{xy}$ estimate of $(U^\varepsilon,W^\varepsilon)$.
\begin{lemma}\label{lem_L_infty_L_2}
Under the assumptions of Theorem \ref{thm1}, for any $0 < \e < 1$, there exists a constant $C$ independent of $\e$, such that
\begin{align*}
    \left\| \left(U^\e,W^\e\right)\right\|^2+\left(\e+\zeta \right)\int_0^T\|\nabla U^\e\|^2\dd t+\e\int_0^T\|\nabla W^\e\|^2\dd t\le C\e^\frac{5}{2}.
\end{align*}
\end{lemma}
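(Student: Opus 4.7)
The plan is to perform a standard $L^2$ energy estimate on the error system \eqref{error equations}: take the $L^2$ inner product of the $U^\varepsilon$-equation with $U^\varepsilon$ and the $W^\varepsilon$-equation with $W^\varepsilon$, then sum. The pressure term vanishes by integration by parts using $\di U^\varepsilon=0$ together with $U^\varepsilon|_{y=0}=0$, while the diffusion and reaction produce the positive quantities $(\varepsilon+\zeta)\|\nabla U^\varepsilon\|^2$, $\varepsilon\|\nabla W^\varepsilon\|^2$, and $4\zeta\|W^\varepsilon\|^2$ on the left. The self-transport trilinears $\langle(U^\varepsilon\cdot\nabla)U^\varepsilon,U^\varepsilon\rangle$ and $\langle(U^\varepsilon\cdot\nabla)W^\varepsilon,W^\varepsilon\rangle$ vanish by $\di U^\varepsilon=0$ and the vanishing boundary trace; the drift trilinears $\langle(u^a\cdot\nabla)U^\varepsilon,U^\varepsilon\rangle$ and $\langle(u^a\cdot\nabla)W^\varepsilon,W^\varepsilon\rangle$ integrate by parts to $-\tfrac12\langle\di u^a,|U^\varepsilon|^2\rangle$ and $-\tfrac12\langle\di u^a,|W^\varepsilon|^2\rangle$, and since the matched expansion is constructed so that $\di u^a$ is uniformly bounded (in fact $O(\sqrt\varepsilon)$) in $L^\infty$ these go into Gr\"onwall. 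The term $\langle(U^\varepsilon\cdot\nabla)u^a,U^\varepsilon\rangle$ is bounded by $\|\nabla u^a\|_{L^\infty}\|U^\varepsilon\|^2\lesssim\|U^\varepsilon\|^2$ using Lemma \ref{the estimate of approximate solution}, since the $1/\sqrt\varepsilon$ factor from $\partial_y u^b$ is absorbed by the explicit $\varepsilon^{1/2}$ prefactors in $u^b$.

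The main obstacle is the trilinear $\langle(U^\varepsilon\cdot\nabla)w^a,W^\varepsilon\rangle$, specifically the normal piece $\int U^\varepsilon_2\,\partial_y w^a\,W^\varepsilon\,\dd x\dd y$, because $\partial_y w^a$ contains the boundary-layer contribution $\frac{1}{\sqrt\varepsilon}(\partial_z w^{b,0})(x,y/\sqrt\varepsilon,t)$, which blows up as $\varepsilon\to 0$ and whose $L^\infty$-norm cannot be absorbed in Gr\"onwall. The remedy is to use $U^\varepsilon|_{y=0}=0$ and rewrite the singular factor with $z=y/\sqrt\varepsilon$ as $\frac{1}{\sqrt\varepsilon}\partial_z w^{b,0}=\frac{z}{y}\partial_z w^{b,0}(x,z,t)$; the Hardy inequality applied to $U^\varepsilon_2/y$ together with the weighted bound $\|\langle z\rangle\partial_z w^b\|_{L^\infty}\le C$ from Lemma \ref{the estimate of approximate solution} yields
\begin{align*}
\Bigl|\int U^\varepsilon_2\cdot\tfrac{1}{\sqrt\varepsilon}(\partial_z w^{b,0})\,W^\varepsilon\,\dd x\dd y\Bigr|
\le \Bigl\|\tfrac{U^\varepsilon_2}{y}\Bigr\|_{L^2_{xy}}\|\langle z\rangle\partial_z w^{b,0}\|_{L^\infty_{xy}}\|W^\varepsilon\|_{L^2_{xy}}
\le C\|\nabla U^\varepsilon\|_{L^2_{xy}}\|W^\varepsilon\|_{L^2_{xy}}.
\end{align*}
Young's inequality then splits the right-hand side into a piece absorbable into $\zeta\|\nabla U^\varepsilon\|^2$ and a piece of the form $C\|W^\varepsilon\|^2$ suitable for Gr\"onwall. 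The tangential piece $\int U^\varepsilon_1\,\partial_x w^a\,W^\varepsilon$ and the remaining contributions to $\partial_y w^a$ (namely $\partial_y w^I$, $\partial_z w^{b,1}$, $\sqrt\varepsilon\,\partial_z w^{b,2}$) are already uniformly bounded in $L^\infty$ by Lemma \ref{the estimate of approximate solution} and cause no difficulty.

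The remaining rotational coupling terms combine, after integration by parts using $U^\varepsilon|_{y=0}=0$, into $-4\zeta\int W^\varepsilon\,\nabla^\perp\!\cdot U^\varepsilon$, which is controlled by Young's inequality with a small parameter $\eta<\zeta$ to absorb the gradient into dissipation while sending the $L^2$-piece into Gr\"onwall. The source contributions are bounded by Cauchy--Schwarz as $\|F\|_{L^2}\|U^\varepsilon\|_{L^2}+\|G\|_{L^2}\|W^\varepsilon\|_{L^2}\le C\varepsilon^{5/4}(\|U^\varepsilon\|+\|W^\varepsilon\|)\le C\varepsilon^{5/2}+\tfrac12(\|U^\varepsilon\|^2+\|W^\varepsilon\|^2)$ by Lemmas \ref{estimate of F} and \ref{estimate of G}. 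Collecting everything yields
\begin{align*}
\tfrac{d}{dt}\bigl(\|U^\varepsilon\|^2+\|W^\varepsilon\|^2\bigr)+c_1(\varepsilon+\zeta)\|\nabla U^\varepsilon\|^2+c_2\varepsilon\|\nabla W^\varepsilon\|^2\le C\bigl(\|U^\varepsilon\|^2+\|W^\varepsilon\|^2\bigr)+C\varepsilon^{5/2},
\end{align*}
and Gr\"onwall's inequality with zero initial data gives $\|(U^\varepsilon,W^\varepsilon)\|^2\le C\varepsilon^{5/2}$; integrating the dissipation over $[0,T]$ completes the proof.
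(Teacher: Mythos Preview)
Your proof is correct and follows essentially the same strategy as the paper's: an $L^2$ energy estimate on \eqref{error equations}, with the Hardy-inequality trick to handle the singular term $U^\varepsilon_2\,\partial_y w^b\,W^\varepsilon$ being the key step in both. Two small remarks: (i) $\di u^a$ is in fact exactly zero (since $\di u=\di U^\varepsilon=0$), so the drift trilinears vanish identically rather than merely being $O(\sqrt\varepsilon)$; (ii) the paper treats $\langle(U^\varepsilon\cdot\nabla)u^a,U^\varepsilon\rangle$ by integrating by parts to $\langle u^a\otimes U^\varepsilon,\nabla U^\varepsilon\rangle$ and using only $\|u^a\|_{L^\infty}$, whereas you bound it directly with $\|\nabla u^a\|_{L^\infty}$ --- both are valid here since $\nabla u^a$ is indeed uniformly bounded by Lemma~\ref{the estimate of approximate solution}.
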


\begin{proof}
Multiplying $\eqref{error equations}_1$ by $U^\e$ and integrating the result by parts, we get

\begin{align}\label{eq_U_epsilon_L2}
       \notag \frac{1}{2} \frac{\dd }{\dd t}\| U^\e\|^2+(\e+\zeta)\|\nabla U^\e\|^2
       &=\langle u^a\otimes U^\e,\nabla U^\e\rangle-2\zeta\langle \nabla^\perp W^\e, U^\e \rangle+\langle F,U^\e \rangle \\
       &\leq \frac{1}{8}\zeta \| \nabla U^\e \|^2+C\left( \| u^a\|^2_{L^\infty}+1\right) \left\| (U^\e,W^\e) \right\|^2+\|F\|^2.
\end{align} 
Multiplying $\eqref{error equations}_2$ by $W^\e$ and integrating the result by parts, we get
\begin{align}\label{eq_W_epsilon_L2}
   \notag \frac{1}{2} \frac{\dd}{\dd t}\|W^\e\|^2&+\e\|\nabla W^\e\|^2+4\zeta\|W^\e\|^2\\
    &=-\langle U^\e\cdot\nabla  w^a,W^\e\rangle+2\zeta\langle\nabla^\perp\cdot U^\e,W^\e\rangle+\langle G,W^\e\rangle.
\end{align}
Using the Hardy inequality, we handle the first term on the right hand side of \eqref{eq_W_epsilon_L2} as follows.
\begin{align}\label{U_L2_WB}
     \notag -\langle U^\e\cdot\nabla w^a,W^\e\rangle&=-\langle U^\e\cdot(\nabla w^I+\pp_xw^b),W^\e\rangle - \langle U^\e_2\pp_y w^b,W^\e\rangle\\ \notag
    &=-\langle U^\e\cdot(\nabla w^I+\pp_xw^b),W^\e\rangle - \left\langle \frac{1}{\sqrt{\e}}U^\e_2\pp_z w^b,W^\e\right\rangle\\ \notag
    &=-\langle U^\e\cdot(\nabla w^I+\pp_xw^b),W^\e\rangle - \left\langle \frac{1}{y}U^\e_2z\pp_z w^b,W^\e\right\rangle\\ 
    &\leq\|U^\e\|\left\|\left(\nabla w^I,\pp_x w^b\right)\right\|_{L^\infty}\|W^\e\|+\|\pp_y U^\e\|\|\langle z\rangle \pp_z w^b\|_{L^\infty}\|W^\e\|.
\end{align}
Hence, substituting \eqref{U_L2_WB} into \eqref{eq_W_epsilon_L2}, we have
\begin{align}\label{eq_W_epsilon_L2_1}
    \notag \frac{1}{2} \frac{\dd}{\dd t}\|W^\e\|^2&+\e\|\nabla W^\e\|^2+2\e\| \di W^\e\|^2+4\zeta\|W^\e\|^2\\ 
    &\leq \frac{1}{4}\zeta\|\nabla U^\e\|^2+C\left( \| (\nabla w^I, \pp_x w^b, \langle z\rangle \pp_z w^b)\|^2_{L^\infty}+1\right) \left\| (U^\e,W^\e) \right\|^2+\|G\|^2.
\end{align}
Suming \eqref{eq_U_epsilon_L2} and \eqref{eq_W_epsilon_L2_1}, we get
\begin{align*}
     \frac{\dd}{\dd t}&\left(\left\| \left(U^\e,W^\e\right)\right\|^2\right)+\left(\e+\zeta \right)\|\nabla U^\e\|^2+\e\|\nabla W^\e\|^2 \\
    &\le  \|F\|^2+\|G\|^2+C\left( \left\| \left( u^a,\nabla w^I,\pp_x w^b,\langle z\rangle\pp_z w^b\right)\right\|^2_{L^{\infty}}+1\right)\left\| \left(U^\e,W^\e\right)\right\|^2,
\end{align*}
which, together with  Gronwall inequality, Lemmas \ref{the estimate of approximate solution}-\ref{estimate of G}, implies
\begin{align*}
    \left\| \left(U^\e,W^\e\right)\right\|^2+\left(\e+\zeta \right)\int_0^T\|\nabla U^\e\|^2\dd t+\e\int_0^T\|\nabla W^\e\|^2\dd t\le C_1e^{C_1T}\e^\frac{5}{2}.
\end{align*}
The proof is complete.
\end{proof}
Next, we have the following $L^\infty_TL^2_{xy}$ estimate of $(\partial_xU^\varepsilon,\partial_xW^\varepsilon)$.
\begin{lemma}
Under the assumptions of Theorem \ref{thm1}, for any  $0 < \e < 1$, there exists a constant $C$ independent of $\e$, such that
    \begin{align*}
    \left\| \left(\pp_xU^\e,\pp_xW^\e\right)\right\|^2+\zeta \int_0^T\|\nabla \pp_xU^\e\|^2\dd t+\e\int_0^T\|\nabla \pp_xW^\e\|^2\dd t\le C \e^\frac{3}{2}.
\end{align*}
\end{lemma}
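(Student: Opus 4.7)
The plan is to apply $\partial_x$ to each equation of the error system \eqref{error equations} and test the differentiated equations against $\partial_x U^\varepsilon$ and $\partial_x W^\varepsilon$ in $L^2_{xy}$. Because $\partial_x$ is tangential and $(U^\varepsilon,W^\varepsilon)|_{y=0}=0$, the differentiated unknowns still satisfy the no-slip condition and $\mathrm{div}\,\partial_x U^\varepsilon=0$, so the diffusion terms integrate by parts cleanly to produce $(\varepsilon+\zeta)\|\nabla\partial_x U^\varepsilon\|^2$ and $\varepsilon\|\nabla\partial_x W^\varepsilon\|^2$ on the dissipation side, while the pressure drops out and the coupling $\pm 2\zeta\nabla^\perp(\cdot)$ cancels exactly as in Lemma \ref{lem_L_infty_L_2}. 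The source contributions are absorbed using $\|(\partial_x F,\partial_x G)\|_{L^\infty_T L^2_{xy}}^2\le C\varepsilon^{5/2}$ from Lemmas \ref{estimate of F}--\ref{estimate of G}.

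The commutators generated by $\partial_x$ split into routine pieces and a singular piece. The routine pieces $(\partial_x U^\varepsilon\cdot\nabla)u^a$, $(U^\varepsilon\cdot\nabla)\partial_x u^a$, $(\partial_x u^a\cdot\nabla)W^\varepsilon$ are controlled by the uniform $L^\infty$ bounds on $\nabla u^a$ and $\partial_x u^a$ in Lemma \ref{the estimate of approximate solution} combined with the $L^2$ estimates on $(U^\varepsilon,W^\varepsilon,\nabla U^\varepsilon)$ from Lemma \ref{lem_L_infty_L_2}. The convective self-interactions $\langle(u^a\cdot\nabla)\partial_x U^\varepsilon,\partial_x U^\varepsilon\rangle$ and $\langle(u^a\cdot\nabla)\partial_x W^\varepsilon,\partial_x W^\varepsilon\rangle$ reduce after integration by parts to bounded multiples of the $L^2$ norms of the tangential derivatives, since $\mathrm{div}\,u^a\in L^\infty$. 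The error self-interactions $\partial_x[(U^\varepsilon\cdot\nabla)U^\varepsilon]$ and $\partial_x[(U^\varepsilon\cdot\nabla)W^\varepsilon]$ are handled by the 2D Ladyzhenskaya inequality \eqref{eq_Lady} together with Young's inequality, absorbing small multiples of $\|\nabla\partial_x U^\varepsilon\|^2$ and $\|\nabla\partial_x W^\varepsilon\|^2$ into the dissipation and leaving a Gronwall factor $\|\nabla U^\varepsilon\|_{L^2}^2$ which is $L^1_t$-integrable by Lemma \ref{lem_L_infty_L_2}.

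The main obstacle, and the origin of the loss relative to the $\varepsilon^{5/2}$ rate of Lemma \ref{lem_L_infty_L_2}, is the singular pair
\[
I_1:=\langle(U^\varepsilon\cdot\nabla)\partial_x w^a,\partial_x W^\varepsilon\rangle,\qquad I_2:=\langle(\partial_x U^\varepsilon\cdot\nabla)w^a,\partial_x W^\varepsilon\rangle,
\]
because $\partial_y w^a$ and $\partial_y\partial_x w^a$ inherit a factor $\varepsilon^{-1/2}$ through the boundary-layer piece $\partial_y w^b=\varepsilon^{-1/2}\partial_z w^b$. Mirroring the treatment of \eqref{U_L2_WB}, the idea is to split $\partial_y w^a=\partial_y w^I+(z/y)\partial_z w^b$ (and similarly for $\partial_y\partial_x w^a$), use the uniform $L^\infty_{xy}$ bounds on $\langle z\rangle\partial_z w^b$ and $\langle z\rangle\partial_z\partial_x w^b$ from Lemma \ref{the estimate of approximate solution}, and then apply Hardy's inequality to $U^\varepsilon_2$ and $\partial_x U^\varepsilon_2$ (both of which vanish at $y=0$) to obtain
\[
\|U^\varepsilon_2/y\|_{L^2}\le C\|\partial_y U^\varepsilon_2\|_{L^2},\qquad \|\partial_x U^\varepsilon_2/y\|_{L^2}\le C\|\partial_y\partial_x U^\varepsilon_2\|_{L^2}.
\]
After Young's inequality, $I_2$ contributes a term absorbed into $\tfrac{\zeta}{8}\|\nabla\partial_x U^\varepsilon\|^2$, while $I_1$ leaves $C\|\partial_y U^\varepsilon\|_{L^2}\|\partial_x W^\varepsilon\|_{L^2}$ whose time integral is controlled through Lemma \ref{lem_L_infty_L_2}; the fact that the $\varepsilon^{-1/2}$-singular factor from the boundary layer must be paid for by only the weaker $L^2_t$ bound $\int_0^T\|\nabla U^\varepsilon\|^2\,\mathrm{d}t=O(\varepsilon^{5/2})$ (rather than the pointwise $\|U^\varepsilon\|^2=O(\varepsilon^{5/2})$) is precisely where the single power of $\varepsilon$ is lost. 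Summing the two energy identities and applying Gronwall's inequality on $[0,T]$ then produces
\[
\|(\partial_x U^\varepsilon,\partial_x W^\varepsilon)\|^2+\zeta\int_0^T\|\nabla\partial_x U^\varepsilon\|^2\,\mathrm{d}t+\varepsilon\int_0^T\|\nabla\partial_x W^\varepsilon\|^2\,\mathrm{d}t\le C\varepsilon^{3/2},
\]
with $C$ depending on $\zeta$, $T$ and the profile norms of Section \ref{Regularity of boundary layer profiles} but independent of $\varepsilon\in(0,1]$.
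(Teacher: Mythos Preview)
Your overall strategy matches the paper's: differentiate tangentially, test in $L^2$, treat the singular $\partial_y w^b$ piece via the Hardy trick of \eqref{U_L2_WB}, and close by Gronwall. However, two points in your explanation are off, and one of them hides the actual rate-limiting step.

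First, a minor correction: the coupling terms $-2\zeta\langle\nabla^\perp\partial_x W^\varepsilon,\partial_x U^\varepsilon\rangle$ and $2\zeta\langle\nabla^\perp\cdot\partial_x U^\varepsilon,\partial_x W^\varepsilon\rangle$ do \emph{not} cancel; integration by parts shows they are equal, so they add. The paper (terms $I_5$, $J_5$) simply bounds each by $C\|\partial_x W^\varepsilon\|^2+\tfrac{\zeta}{16}\|\nabla\partial_x U^\varepsilon\|^2$, which is harmless.

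Second, and this is the real issue: your Hardy treatment of $I_1=\langle(U^\varepsilon\cdot\nabla)\partial_x w^a,\partial_x W^\varepsilon\rangle$ yields $|I_1|\le C(\|U^\varepsilon\|+\|\nabla U^\varepsilon\|)\|\partial_x W^\varepsilon\|$, and since $\int_0^T\|\nabla U^\varepsilon\|^2\,\mathrm{d}t=O(\varepsilon^{5/2})$ the forcing contribution is $O(\varepsilon^{5/2})$ --- there is \emph{no} loss here. The drop to $\varepsilon^{3/2}$ comes from the term you labeled ``routine'': $\langle(\partial_x u^a\cdot\nabla)W^\varepsilon,\partial_x W^\varepsilon\rangle$ is controlled by $\nabla W^\varepsilon$, not by $\nabla U^\varepsilon$. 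The paper (term $J_3$) integrates by parts to $\langle\partial_x u^a\,W^\varepsilon,\nabla\partial_x W^\varepsilon\rangle$ and absorbs into $\varepsilon\|\nabla\partial_x W^\varepsilon\|^2$ at cost $C\varepsilon^{-1}\|W^\varepsilon\|^2\le C\varepsilon^{3/2}$; alternatively you could bound it directly using $\int_0^T\|\nabla W^\varepsilon\|^2\,\mathrm{d}t\le C\varepsilon^{3/2}$ from Lemma~\ref{lem_L_infty_L_2}. The same $\varepsilon^{-1}$ mechanism appears in the paper's $J_2$ (which the paper treats by IBP rather than Hardy) and in $J_4$, where the Gronwall factor is $\varepsilon^{-2}\|W^\varepsilon\|^2\|\nabla W^\varepsilon\|^2\sim\varepsilon^{1/2}\|\nabla W^\varepsilon\|^2$, not $\|\nabla U^\varepsilon\|^2$ as you wrote. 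Once these are accounted for correctly, the argument closes at $C\varepsilon^{3/2}$.
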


\begin{proof} 
Multiplying $\pp_x\eqref{error equations}_1$ by $\pp_xU^\e$ and integrating by parts, we get
\begin{align}\label{eq_U_x_L2}
    \notag \frac{1}{2}\frac{\dd}{\dd t}\left\|\pp_xU^\e \right\|^2+\left(\e+\zeta \right)\left\|\nabla\pp_xU^\e\right\|^2=&~\left\langle u^a \otimes \pp_xU^\e,\nabla \pp_xU^\e\right\rangle+\left\langle U^\e\otimes \pp_xu^a,\nabla \pp_xU^\e\right\rangle\\ \notag
    &+\left\langle\pp_x u^a \otimes U^\e,\nabla \pp_xU^\e\right\rangle-\left\langle\pp_xU^\e\cdot \nabla U^\e, \pp_xU^\e\right\rangle\\ 
    &+2\zeta\left\langle \nabla^\perp\pp_xW^\e,\pp_xU^\e\right\rangle+\left\langle\pp_xF,\pp_xU^\e \right\rangle \notag \\
    &=\sum_{i=1}^{6}I_i.
\end{align}
For $I_1$, we have
\begin{align*}
    I_1 \le \left\|u^a \right\|_{L^\infty}\left\|\pp_xU^\e \right\|\left\| \nabla\pp_xU^\e\right\|\le C \left\| u^a\right\|^2_{L^\infty}\left\| \pp_xU\right\|^2+\frac{1}{16}\zeta\left\| \nabla\pp_xU\right\|^2.
\end{align*}
For $I_2$ and $I_3$, we have
\begin{align*}
   I_2=\notag \left\langle U^\e\otimes \pp_xu^a,\nabla \pp_xU^\e\right\rangle
    \le \left\|U^\e \right\|\left\|\pp_xu^a \right\|_{L^\infty}\left\| \nabla\pp_x U^\e\right\|
    \le C\left\|\pp_xu^a \right\|^2_{L^\infty}\left\|U \right\|^2+\frac{1}{16}\zeta\left\| \nabla\pp_xU\right\|^2.
\end{align*}
and
\begin{align*}
   I_3  \le C\left\| \pp_x u^a \right\|^2_{L^\infty}\left\|U \right\|^2+\frac{1}{16}\zeta\left\| \nabla\pp_xU\right\|^2.
\end{align*}
For $I_4$ using the Ladyzhenskaya inequality, we have
\begin{align*}
   \notag I_4= -\left\langle\pp_xU^\e\cdot \nabla U^\e, \pp_xU^\e\right\rangle
    &   \le\|\pp_xU^\e\|_{L^4}\|\pp_xU^\e \|_{L^4}\|\nabla U^\e\|\\
   \notag  &\le\|\pp_xU^\e\|\|\nabla\pp_xU^\e\| \|\nabla U^\e\|\\
    &\le C\|\nabla U^\e\|^2\|\pp_xU^\e\|^2+\frac{1}{16}\zeta\|\nabla\pp_xU^\e\|^2.
\end{align*}
For the last two terms, we have
\begin{align*}
 \notag
   I_5+I_6&\le2\zeta\left\| \pp_xW^\e\right\|\left\| \nabla\pp_xU^\e\right\|+\left\| \pp_xF\right\|\left\| \pp_xU^\e\right\|\\
   &\le C\|\pp_xW^\e\|^2+C\|U^\e\|^2+\frac{1}{16}\zeta\|\nabla\pp_xU\|^2+\|\pp_xF\|^2.
\end{align*}
Substituting the estimates of $I_1,\cdots,I_6$ into \eqref{eq_U_x_L2}, we have 
\begin{align}\label{eq_U_x_L2_1}
    \notag \frac{1}{2}\frac{\dd}{\dd t}\left\|\pp_xU^\e \right\|^2+ \frac{11}{16}\zeta \left\|\nabla\pp_xU^\e\right\|^2 \leq&\, C\left( \left\| u^a\right\|^2_{L^\infty}\left\| \pp_xU\right\|^2 + (\left\| \pp_x u^a \right\|^2_{L^\infty}+1)\left\|U \right\|^2\right) \\
    & + C\left(1+\|\nabla U^\e\|^2 \right)\left\| (\pp_xW^\e,\pp_xU^\e )\right\|^2 + \|\pp_xF\|^2.
\end{align}
Next, we deal with the estimate of $\partial_xW^\varepsilon$. Taking $L^2$ inner product of $\pp_x\eqref{error equations}_2 $ with $\pp_xW^\e$, and using integration by parts,
we have
\begin{align}\label{eq_W_x_L2}
    \notag \frac{1}{2}\frac{\dd}{\dd t}\left\|\pp_xW^\e \right\|^2+&\e\|\nabla\pp_xW^\e\|^2+4\zeta\|\pp_xW^\e\|\\ \notag
    =&~-\left\langle \pp_xU^\e\cdot \nabla w^a,\pp_xW^\e\right\rangle-\left\langle U^\e\cdot \nabla \pp_xw^a,\pp_xW^\e\right\rangle\\ \notag
    &~-\left\langle \pp_xu^a\cdot\nabla W^\e,\pp_xW^\e\right\rangle -\left\langle \pp_xU^\e\cdot\nabla W^\e,\pp_xW^\e\right\rangle\\
    &~+2\zeta\left\langle \nabla\times\pp_xU^\e,\pp_xW^\e\right\rangle+\left\langle\pp_xG,\pp_xW^\e \right\rangle\notag\\ 
    =&\sum_{i=1}^{6}J_i.
\end{align}
Similar to \eqref{U_L2_WB}, for $J_1$, we have
\begin{align*}
    \notag J_1&\leq\|\pp_xU^\e\|\|(\pp_x w^b,\nabla w^I)\|_{L^\infty}\|\pp_xW^\e\|+\|\pp_y \pp_xU^\e\|\|\langle z\rangle \pp_z w^b\|_{L^\infty}\|\pp_xW^\e\|\\
    &\le \left\|\left(\nabla w^I,\pp_xw^b,\langle z\rangle\pp_z w^b\right) \right\|^2_{L^\infty}\left\| (\pp_xW^\e,\pp_xU^\e )\right\|^2+\frac{1}{16}\zeta\|\nabla\pp_xU^\e\|^2.
\end{align*}
For $J_2$, we have
\begin{align*}
     J_2=\left\langle\partial_x w^a   U^\e,\nabla\pp_xW^\e \right\rangle\le\|\pp_xw^a\|_{L^\infty}\|U^\e\|\|\nabla\pp_xW^\e\|\le C\e^{-1}\|U^\e\|^2\|\pp_xw^a\|_{L^\infty}^2+\frac{1}{16}\e\|\nabla\pp_xW^\e\|^2.
\end{align*}
Similarly,
\begin{align*}
      J_3  \le\|\pp_x u^a\|_{L^\infty}\|W^\e\|\|\nabla\pp_xW^\e\|\le C\e^{-1}\|W^\e\|^2\|\pp_x u^a\|_{L^\infty}^2+\frac{1}{16}\e\|\nabla\pp_xW^\e\|^2 .
\end{align*}
For $J_4$, using the Ladyzhenskaya inequality, we have
\begin{align*}
     \notag  J_4  &=\left\langle W^\e \pp_xU^\e ,\nabla\pp_xW^\e\right\rangle\le\|\pp_xU^\e\|_{L^4}\| W^\e\|_{L^4}\|\nabla\pp_xW^\e\|\\ \notag 
     &\le\|\pp_xU^\e\|^\frac{1}{2}\|\nabla\pp_xU^\e\|^\frac{1}{2}\|W^\e\|^\frac{1}{2}\|\nabla W^\e\|^\frac{1}{2}\|\nabla\pp_xW^\e\|\\
     &\le C\e^{-2}\|W^\e\|^2\|\nabla W^\e\|^2\|\pp_xU^\e\|^2+\frac{1}{16}\zeta\|\nabla\pp_xU^\e\|^2+\frac{1}{16}\e\|\nabla\pp_xW^\e\|^2.
\end{align*}
Similar to the estimate of $I_5$ and $I_6$, we have
\begin{align*}
    J_5+J_6\le C\|\pp_xW^\e\|^2+\frac{1}{16}\zeta\|\nabla\pp_xU^\e\|^2+\|\pp_xG\|^2.
\end{align*}
Substituting the estimates of $J_1,\cdots,J_6$ into \eqref{eq_W_x_L2}, we have 
\begin{align}\label{eq_W_x_L2_1}
    \notag &\frac{1}{2}\frac{\dd}{\dd t}\left\|\pp_xW^\e \right\|^2+\frac{1}{2}\e\|\nabla\pp_xW^\e\|^2+4\zeta\|\pp_xW^\e\|\\ \notag
    \leq&\,  C\left(\left\|\left(\nabla w^I,\pp_xw^b,\langle z\rangle\pp_z w^b\right) \right\|^2_{L^\infty} + \e^{-2}\|W^\e\|^2\|\nabla W^\e\|^2 + 1\right) 
     \left\| (\pp_xW^\e,\pp_xU^\e )\right\|^2\\ 
     &+\frac{3}{16}\zeta\|\nabla\pp_xU^\e\|^2+\|\pp_xG\|^2 + C\e^{-1}\|(U^\e,W^\varepsilon)\|^2\|\pp_xw^a\|_{L^\infty}^2.
\end{align}
Summing \eqref{eq_U_x_L2_1} and \eqref{eq_W_x_L2_1} up, using Lemmas \ref{the estimate of approximate solution}-\ref{estimate of G} and \ref{lem_L_infty_L_2}, we have 
\begin{align*}
    \notag &\frac{\dd}{\dd t}\left\|\pp_x\left(U^\e,W^\e \right) \right\|^2+\zeta\|\nabla\pp_xU^\e\|^2+\e\|\nabla\pp_xW^\e\|^2 \\
    \notag&\le C\left( \varepsilon^{\frac{1}{2}}\left\|  \nabla W^\e \right\|^2+1\right)  \left\|\pp_x\left(U^\e,W^\e \right) \right\|^2+ C\e^{\frac{3}{2}},
\end{align*}
which together with Lemma \ref{lem_L_infty_L_2} and Gronwall inequality, implies 
 \begin{align*}
    \left\| \left(\pp_xU^\e,\pp_xW^\e\right)\right\|^2+\zeta \int_0^T\|\nabla \pp_xU^\e\|^2\dd t+\e\int_0^T\|\nabla \pp_xW^\e\|^2\dd t\le C_2e^{C_2T}\e^\frac{3}{2}.
\end{align*}
The proof is complete. 
\end{proof}
Next, we give the $L^\infty_TL^2_{xy}$ estimate of $(\partial_t U^\varepsilon,\partial_t W^\varepsilon )$.
\begin{lemma}\label{lem_t_L2}
Under the assumptions of Theorem \ref{thm1}, for any $0 < \e < 1$, there exists a constant $C$ independent of $\e$, such that
    \begin{align*}
   \left\| \left(\pp_tU^\e,\pp_tW^\e\right)\right\|^2+\zeta \int_0^T\|\nabla \pp_tU^\e\|^2\dd t+\e\int_0^T\|\nabla \pp_tW^\e\|^2\dd t\le C \e^\frac{3}{2}.
\end{align*}
\end{lemma}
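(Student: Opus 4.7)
The plan is to follow the scheme of the preceding lemma with $\partial_t$ in place of $\partial_x$. Since $(U^\varepsilon,W^\varepsilon)|_{t=0}=0$, evaluating \eqref{error equations} at $t=0$ kills every spatial and nonlinear term and yields $\partial_t U^\varepsilon|_{t=0}=F|_{t=0}$ and $\partial_t W^\varepsilon|_{t=0}=G|_{t=0}$, so by Lemmas \ref{estimate of F}--\ref{estimate of G} we have $\|(\partial_t U^\varepsilon,\partial_t W^\varepsilon)|_{t=0}\|^2\le C\varepsilon^{5/2}$. Then I would differentiate $\eqref{error equations}_1$ and $\eqref{error equations}_2$ in $t$, test with $\partial_t U^\varepsilon$ and $\partial_t W^\varepsilon$, and integrate by parts (using $\di\,\partial_t U^\varepsilon=0$ and $\partial_t U^\varepsilon|_{y=0}=\partial_t W^\varepsilon|_{y=0}=0$). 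The right-hand side splits into analogues of $I_1$--$I_6$ and $J_1$--$J_6$ from the preceding proof plus two new contributions $\mathcal T_u:=\langle (U^\varepsilon\cdot\nabla)\partial_t u^a,\partial_t U^\varepsilon\rangle$ and $\mathcal T_w:=\langle (U^\varepsilon\cdot\nabla)\partial_t w^a,\partial_t W^\varepsilon\rangle$ coming from the Leibniz rule.

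The $I$- and $J$-analogues are handled exactly as before. The antisymmetric coupling $2\zeta\langle\nabla^\perp\partial_t W^\varepsilon,\partial_t U^\varepsilon\rangle+2\zeta\langle\nabla^\perp\!\cdot\partial_t U^\varepsilon,\partial_t W^\varepsilon\rangle$ is absorbed into $\tfrac{\zeta}{16}\|\nabla\partial_t U^\varepsilon\|^2$; the Ladyzhenskaya-type terms generated by $(\partial_t U^\varepsilon\cdot\nabla)U^\varepsilon$ and $(\partial_t U^\varepsilon\cdot\nabla)W^\varepsilon$ produce coefficients $\|\nabla U^\varepsilon\|^2$ and $\varepsilon^{-2}\|W^\varepsilon\|^2\|\nabla W^\varepsilon\|^2$, both in $L^1(0,T)$ with small norm by Lemma \ref{lem_L_infty_L_2}; the $J_1$-analogue $\langle\partial_t U^\varepsilon\cdot\nabla w^a,\partial_t W^\varepsilon\rangle$ uses the Hardy trick from \eqref{U_L2_WB} together with $\|\langle z\rangle\partial_z w^b\|_{L^\infty}\le C$; and the pure transport $\langle (u^a\cdot\nabla)\partial_t U^\varepsilon,\partial_t U^\varepsilon\rangle=-\tfrac12\int(\di u^a)|\partial_t U^\varepsilon|^2$ is harmless since $\di u^a=O(\varepsilon^{3/2})$ by construction (the inner profiles satisfy $\partial_x u^{b,k}_1+\partial_z u^{b,k+1}_2=0$, leaving only the $\varepsilon^{3/2}S$-piece). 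Together with Lemmas \ref{estimate of F}--\ref{estimate of G}, all of these contribute at most $O(\varepsilon^{5/2})$ after time integration.

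The new term $\mathcal T_u$ is bounded by $\|\nabla\partial_t u^a\|_{L^\infty}\|U^\varepsilon\|\|\partial_t U^\varepsilon\|\le C\|U^\varepsilon\|^2+C\|\partial_t U^\varepsilon\|^2$ via Lemma \ref{the estimate of approximate solution}, contributing $O(\varepsilon^{5/2})$ in time. The crucial term is $\mathcal T_w$: since $\partial_y\partial_t w^a$ inherits the $\varepsilon^{-1/2}$ singularity of the boundary layer, I would integrate by parts (legitimate because $\di U^\varepsilon=0$ and $U^\varepsilon|_{y=0}=0$) to obtain $\mathcal T_w=-\langle\partial_t w^a\,U^\varepsilon,\nabla\partial_t W^\varepsilon\rangle$, bounded by $C\varepsilon^{-1}\|\partial_t w^a\|_{L^\infty}^2\|U^\varepsilon\|^2+\tfrac{\varepsilon}{16}\|\nabla\partial_t W^\varepsilon\|^2$; the first piece integrates to $C\varepsilon^{-1}\cdot T\cdot C\varepsilon^{5/2}=C\varepsilon^{3/2}$ via Lemma \ref{lem_L_infty_L_2}, and this is precisely the mechanism that caps the final exponent.

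Summing the two energy identities, using Lemmas \ref{the estimate of approximate solution}, \ref{estimate of F}, \ref{estimate of G}, \ref{lem_L_infty_L_2}, and applying Gronwall (whose exponent is finite because the generated $L^1(0,T)$-coefficient has small norm $O(\varepsilon^{1/2})$) closes the estimate with the stated bound $C\varepsilon^{3/2}$. The main obstacle is the term $\mathcal T_w$: the boundary-layer singularity of $\partial_y\partial_t w^a$ forces an $\varepsilon^{-1}$ loss when absorbed into the degenerate $\varepsilon$-dissipation of $\partial_t W^\varepsilon$, and it is only the $O(\varepsilon^{5/2})$ smallness of $\|U^\varepsilon\|^2$ provided by Lemma \ref{lem_L_infty_L_2} that allows the argument to close at the order $\varepsilon^{3/2}$.
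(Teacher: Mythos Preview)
Your proposal is correct and follows essentially the same route as the paper's proof: differentiate in $t$, test with $(\partial_t U^\varepsilon,\partial_t W^\varepsilon)$, treat the resulting terms exactly as in the $\partial_x$-lemma (your $\mathcal T_u,\mathcal T_w$ are not ``new'' but precisely the $M_2,N_2$ analogues of $I_2,J_2$), use the Hardy trick on $N_1$, integrate $\mathcal T_w$ by parts and absorb into the $\varepsilon$-dissipation to pick up the $\varepsilon^{-1}\|U^\varepsilon\|^2\sim\varepsilon^{3/2}$ bottleneck, and close by Gronwall. Two minor corrections: $\partial_t U^\varepsilon|_{t=0}=\mathbb P F|_{t=0}$ (Leray projection, because of the pressure) rather than $F|_{t=0}$, and in the paper $\di u^a=0$ exactly so the pure transport term vanishes identically---neither affects the argument.
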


\begin{proof} 
Multiplying $\pp_t\eqref{error equations}_1$ by $\pp_tU^\e$ and integrating the result by parts, we get
\begin{align}\label{eq_U_epsilon_t_L_2}
    \notag \frac{1}{2}\frac{\dd}{\dd t}\left\|\pp_tU^\e \right\|^2+\left(\e+\zeta \right)\left\|\nabla\pp_tU^\e\right\|^2=&~\left\langle u^a \otimes \pp_tU^\e,\nabla \pp_tU^\e\right\rangle+\left\langle U^\e\otimes \pp_tu^a,\nabla \pp_tU^\e\right\rangle\\ \notag
    &+\left\langle\pp_t u^a \otimes U^\e,\nabla \pp_tU^\e\right\rangle-\left\langle\pp_tU^\e\cdot \nabla U^\e, \pp_tU^\e\right\rangle\\ 
    &+2\zeta\left\langle \nabla^\perp\pp_tW^\e,\pp_tU^\e\right\rangle+\left\langle\pp_tF,\pp_tU^\e \right\rangle \notag \\
    &=\sum_{i=1}^{6}M_i.
\end{align}
For $M_1$, we have  
\begin{align*}
    M_1 \le \left\|u^a \right\|_{L^\infty}\left\|\pp_tU^\e \right\|\left\| \nabla\pp_tU^\e\right\|\le C \left\| u^a\right\|^2_{L^\infty}\left\| \pp_tU^\varepsilon\right\|^2+\frac{1}{16}\zeta\left\| \nabla\pp_tU^\varepsilon\right\|^2.
\end{align*}
For $M_2$, we have
\begin{align*}
   M_2=\notag \left\langle U^\e\otimes \pp_tu^a,\nabla \pp_tU^\e\right\rangle
    \le \left\|U^\e \right\|\left\|\pp_tu^a \right\|_{L^\infty}\left\| \nabla\pp_t U^\e\right\|
    \le C\left\|\pp_tu^a \right\|^2_{L^\infty}\left\|U^\varepsilon \right\|^2+\frac{1}{16}\zeta\left\| \nabla\pp_tU^\varepsilon\right\|^2.
\end{align*}
and
\begin{align*}
  M_3 \le C\left\| \pp_t u^a \right\|^2_{L^\infty}\left\|U^\varepsilon \right\|^2+\frac{1}{16}\zeta\left\| \nabla\pp_tU^\varepsilon\right\|^2.
\end{align*}
Using the Ladyzhenskaya inequality, we have
\begin{align*}
   \notag M_4= -\left\langle\pp_tU^\e\cdot \nabla U^\e, \pp_tU^\e\right\rangle
    &   \le\|\pp_tU^\e\|_{L^4}\|\pp_tU^\e \|_{L^4}\|\nabla U^\e\|\\
   \notag  &\le\|\pp_tU^\e\|\|\nabla\pp_tU^\e\| \|\nabla U^\e\|\\
    &\le C\|\nabla U^\e\|^2\|\pp_tU^\e\|^2+\frac{1}{16}\zeta\|\nabla\pp_tU^\e\|^2.
\end{align*}
For the last terms, we have
\begin{align*}
 \notag
   M_5+M_6&\le2\zeta\left\| \pp_tW^\e\right\|\left\| \nabla\pp_tU^\e\right\|+\left\| \pp_tF\right\|\left\| \pp_tU^\e\right\|\\
   &\le C\|\pp_tW^\e\|^2+C\|U^\e\|^2+\frac{1}{16}\zeta\|\nabla\pp_tU\|^2+\|\pp_tF\|^2.
\end{align*}
Substituting the estimates of $M_1,\cdots,M_6$ into \eqref{eq_U_epsilon_t_L_2}, we have 
\begin{align}\label{eq_U_epsilon_t_L_2_1}
    \notag \frac{1}{2}\frac{\dd}{\dd t}\left\|\pp_tU^\e \right\|^2+ \frac{11}{16}\zeta \left\|\nabla\pp_tU^\e\right\|^2 \leq &~C \left(\left\|u^a\right\|^2_{L^\infty}+\|\nabla U^\e\|^2 \right)\left\| (\pp_tU^\varepsilon,\partial_t W^\varepsilon)\right\|^2\notag \\
    &+C\left(\|\pp_tu^a \|^2_{L^\infty}+1\right)\left\|U^\varepsilon \right\|^2 + \|\pp_tF\|^2.
\end{align}
Next, we deal with the estimate of $\partial_tW^\varepsilon$. Taking $L^2$ inner product of $\pp_t\eqref{error equations}_2 $ with $\pp_tW^\e$, and using integration by parts,
we have 
\begin{align}\label{eq_W_epsilon_t_L_2}
    \notag \frac{1}{2}\frac{\dd}{\dd t}\left\|\pp_tW^\e \right\|^2+&\e\|\nabla\pp_tW^\e\|^2+4\zeta\|\pp_tW^\e\|\\ \notag
    =&~-\left\langle \pp_tU^\e\cdot \nabla w^a,\pp_tW^\e\right\rangle-\left\langle U^\e\cdot \nabla \pp_tw^a,\pp_tW^\e\right\rangle\\ \notag
    &~-\left\langle \pp_tu^a\cdot\nabla W^\e,\pp_tW^\e\right\rangle -\left\langle \pp_tU^\e\cdot\nabla W^\e,\pp_tW^\e\right\rangle\\
    &~+2\zeta\left\langle \nabla\times\pp_tU^\e,\pp_tW^\e\right\rangle+\left\langle\pp_tG,\pp_tW^\e \right\rangle \notag\\ 
    =&\sum_{i=1}^{6}N_i.
\end{align}
We handle $N_1$ as $M_1$ to get
\begin{align*}
    \notag N_1&\leq\|\pp_tU^\e\|\|(\pp_t w^b,\nabla w^I)\|_{L^\infty}\|\pp_tW^\e\|+\|\pp_y \pp_tU^\e\|\|\langle z\rangle \pp_z w^b\|_{L^\infty}\|\pp_tW^\e\|\\
    &\le \left\|\left(\nabla w^I,\pp_tw^b,\langle z\rangle\pp_z w^b\right) \right\|^2_{L^\infty}\left\| (\pp_tW^\e,\pp_tU^\e )\right\|^2+\frac{1}{16}\zeta\|\nabla\pp_tU^\e\|^2.
\end{align*}
For $N_2$ and $N_3$, we have 
\begin{align*}
     N_2=\left\langle\partial_x w^a   U^\e,\nabla\pp_tW^\e \right\rangle\le\|\pp_tw^a\|_{L^\infty}\|U^\e\|\|\nabla\pp_tW^\e\|\le C\e^{-1}\|U^\e\|^2\|\pp_tw^a\|_{L^\infty}^2+\frac{1}{16}\e\|\nabla\pp_tW^\e\|^2,
\end{align*}
and 
\begin{align*}
     N_3\le\|\pp_t u^a\|_{L^\infty}\|W^\e\|\|\nabla\pp_tW^\e\|\le C\e^{-1}\|W^\e\|^2\|\pp_t u^a\|_{L^\infty}^2+\frac{1}{16}\e\|\nabla\pp_tW^\e\|^2 .
\end{align*}
For $N_4$, using the Ladyzhenskaya inequality, we have 
\begin{align*}
     N_4&=\left\langle W^\e \pp_tU^\e ,\nabla\pp_tW^\e\right\rangle\le\|\pp_tU^\e\|_{L^4}\| W^\e\|_{L^4}\|\nabla\pp_tW^\e\|\\ \notag 
     &\le\|\pp_tU^\e\|^\frac{1}{2}\|\nabla\pp_tU^\e\|^\frac{1}{2}\|W^\e\|^\frac{1}{2}\|\nabla W^\e\|^\frac{1}{2}\|\nabla\pp_tW^\e\|\\
     &\le C\e^{-2}\|W^\e\|^2\|\nabla W^\e\|^2\|\pp_tU^\e\|^2+\frac{1}{16}\zeta\|\nabla\pp_tU^\e\|^2+\frac{1}{16}\e\|\nabla\pp_tW^\e\|^2.
\end{align*}
Similar to the estimate of $M_5$ and $M_6$, we have
\begin{align*}
    N_5+N_6\le C\|\pp_tW^\e\|^2+\frac{1}{16}\zeta\|\nabla\pp_tU^\e\|^2+\|\pp_tG\|^2.
\end{align*}
Substituting the estimates of $N_1,\cdots,N_6$ into \eqref{eq_W_epsilon_t_L_2}, we have 
\begin{align}\label{eq_W_epsilon_t_L_2_1}
    \notag &\frac{1}{2}\frac{\dd}{\dd t}\left\|\pp_tW^\e \right\|^2+\frac{1}{2}\e\|\nabla\pp_tW^\e\|^2+4\zeta\|\pp_tW^\e\|\\ \notag
    \leq&~C\left(\left\|\left(\nabla w^I,\pp_tw^b,\langle z\rangle\pp_z w^b\right) \right\|^2_{L^\infty} + \e^{-2}\|W^\e\|^2\|\nabla W^\e\|^2 + 1\right) 
     \left\| (\pp_tW^\e,\pp_tU^\e )\right\|^2\notag\\ 
    &+\frac{3}{16}\zeta\|\nabla\pp_tU^\e\|^2 +C\e^{-1}\|W^\e\|^2\|\pp_t u^a\|_{L^\infty}^2+ \|\pp_tG\|^2 .
\end{align}
Summing \eqref{eq_U_epsilon_t_L_2_1} and \eqref{eq_W_epsilon_t_L_2_1} up, using Lemmas \ref{the estimate of approximate solution}-\ref{estimate of G} and \ref{lem_L_infty_L_2}, we have 
\begin{align*}
    \notag &\frac{\dd}{\dd t}\left\|\pp_t\left(U^\e,W^\e \right) \right\|^2+\zeta\|\nabla\pp_tU^\e\|^2+\e\|\nabla\pp_tW^\e\|^2 \\
    \notag&\le C\left( \e^{\frac{1}{2}} \|\nabla W^\e\|^2 + 1\right)\left\| (\pp_tW^\e,\pp_tU^\e )\right\|^2 + C\varepsilon^{\frac{3}{2}},
\end{align*}
which together with Lemma \ref{lem_L_infty_L_2} and Gronwall inequality, implies 
\begin{align*}
    \left\| \left(\pp_tU^\e,\pp_tW^\e\right)\right\|^2+\zeta \int_0^T\|\nabla \pp_tU^\e\|^2\dd t+\e\int_0^T\|\nabla \pp_tW^\e\|^2\dd t\le C_3e^{C_3T}\e^\frac{3}{2}.
\end{align*} 
The proof is complete.
\end{proof}
Finally, we give the estimate of $(\partial_t\partial_x W^\varepsilon,\partial_t\partial_x U^\varepsilon)$.
\begin{lemma}
Under the assumptions of Theorem \ref{thm1}, for any $0 < \e < 1$, there exists a constant $C$ independent of $\e$, such that
    \begin{align*}
    \left\| \left(\pp_t\pp_xU^\e,\pp_t\pp_xW^\e\right)\right\|^2+\zeta \int_0^T\|\nabla \pp_t\pp_xU^\e\|^2\dd t+\e\int_0^T\|\nabla \pp_t\pp_xW^\e\|^2\dd t\le C_3e^{C_3T}\e^\frac{1}{2}.
\end{align*}
\end{lemma}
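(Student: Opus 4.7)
The plan is to apply $\partial_t \partial_x$ to both equations in \eqref{error equations}, take the $L^2$ inner product with $\partial_t \partial_x U^\varepsilon$ and $\partial_t \partial_x W^\varepsilon$ respectively, integrate by parts (using the homogeneous boundary conditions preserved by $\partial_t$ and $\partial_x$), and then sum. This will produce an inequality of the form
\begin{align*}
\frac{d}{dt}\bigl\|\partial_t\partial_x(U^\varepsilon,W^\varepsilon)\bigr\|^2 + \zeta\|\nabla\partial_t\partial_x U^\varepsilon\|^2 + \varepsilon\|\nabla\partial_t\partial_x W^\varepsilon\|^2 \le \mathcal{A}(t)\,\bigl\|\partial_t\partial_x(U^\varepsilon,W^\varepsilon)\bigr\|^2 + \mathcal{R}(t),
\end{align*}
to which Gronwall's inequality will be applied. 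The source contribution $\mathcal{R}$ will collect both $\|(\partial_t\partial_x F,\partial_t\partial_x G)\|^2$, which is $O(\varepsilon^{5/2})$ by Lemmas \ref{estimate of F}--\ref{estimate of G}, and the ``lower-order'' terms coming from commuting $\partial_t \partial_x$ through the quadratic nonlinearities.

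Expanding $\partial_t\partial_x[(u^a\cdot\nabla)U^\varepsilon + (U^\varepsilon\cdot\nabla)u^a + (U^\varepsilon\cdot\nabla)U^\varepsilon]$ produces the leading transport piece $\langle u^a\otimes\partial_t\partial_x U^\varepsilon,\nabla\partial_t\partial_x U^\varepsilon\rangle$ (absorbed by $\tfrac{1}{16}\zeta\|\nabla\partial_t\partial_x U^\varepsilon\|^2$ and a term controlled by $\|u^a\|_{L^\infty}^2$), plus mixed terms where one derivative falls on $u^a$ and the other on a lower-order derivative of $U^\varepsilon$. These mixed terms are handled exactly as $I_2,I_3,M_2,M_3$ were, using $\|(\partial_t u^a,\partial_x u^a,\partial_t\partial_x u^a)\|_{L^\infty}\le C$ from Lemma \ref{the estimate of approximate solution}, together with the bounds $\|(U^\varepsilon,W^\varepsilon)\|^2 = O(\varepsilon^{5/2})$, $\|\partial_x(U^\varepsilon,W^\varepsilon)\|^2 = O(\varepsilon^{3/2})$, $\|\partial_t(U^\varepsilon,W^\varepsilon)\|^2=O(\varepsilon^{3/2})$ from Lemmas \ref{lem_L_infty_L_2}--\ref{lem_t_L2}. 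For the $W^\varepsilon$-equation, the troublesome boundary-layer term $\partial_y w^b$ is treated by Hardy's inequality exactly as in \eqref{U_L2_WB}, reducing $\partial_y w^b = \varepsilon^{-1/2}\partial_z w^b$ to $(\langle z\rangle \partial_z w^b)\cdot (y^{-1}\partial_t\partial_x U_2^\varepsilon)$ and then absorbing $\|\partial_y\partial_t\partial_x U^\varepsilon\|$ into the dissipation.

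The main obstacle will be the genuinely nonlinear terms $\partial_t\partial_x[(U^\varepsilon\cdot\nabla)U^\varepsilon]$ and $\partial_t\partial_x[(U^\varepsilon\cdot\nabla)W^\varepsilon]$. After Leibniz expansion, the hardest contribution is $\langle (\partial_t\partial_x U^\varepsilon\cdot\nabla)W^\varepsilon, \partial_t\partial_x W^\varepsilon\rangle$-type; applying Ladyzhenskaya as in $J_4,N_4$ yields a factor $\|W^\varepsilon\|^{1/2}\|\nabla W^\varepsilon\|^{1/2}$, and after Young's inequality one obtains an absorbing term $\tfrac{1}{16}\varepsilon\|\nabla\partial_t\partial_x W^\varepsilon\|^2$ plus $C\varepsilon^{-2}\|W^\varepsilon\|^2\|\nabla W^\varepsilon\|^2\|\partial_t\partial_x U^\varepsilon\|^2$. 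Since $\|W^\varepsilon\|^2 \le C\varepsilon^{5/2}$, this coefficient is $C\varepsilon^{1/2}\|\nabla W^\varepsilon\|^2$, which lies in $L^1(0,T)$ by Lemma \ref{lem_L_infty_L_2} with norm $O(\varepsilon)$, so Gronwall's inequality remains applicable. Analogous arguments control the mixed product $\langle U^\varepsilon\cdot\nabla \partial_t\partial_x W^\varepsilon, \partial_t\partial_x W^\varepsilon\rangle$-type after one integration by parts.

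Collecting all the contributions, one obtains
\begin{align*}
\mathcal{A}(t) \le C\bigl(1+\|\nabla U^\varepsilon\|^2 + \varepsilon^{1/2}\|\nabla W^\varepsilon\|^2\bigr), \qquad \int_0^T\!\mathcal{R}(t)\,dt \le C\varepsilon^{1/2},
\end{align*}
where the $\varepsilon^{1/2}$ in $\mathcal{R}$ arises because the lower-order contributions multiplied by $\varepsilon^{-1}$ (coming from Young's inequality on $\nabla \partial_t\partial_x W^\varepsilon$) only give $O(\varepsilon^{5/2-1})=O(\varepsilon^{3/2})$, and, most restrictively, the $O(\varepsilon^{-1})$ losses in the terms involving $\partial_t w^a$ and $\partial_x w^a$ acted on $\partial_t\partial_x$-quantities combine with the available $O(\varepsilon^{3/2})$ bounds to yield $O(\varepsilon^{1/2})$. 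Since $\int_0^T \mathcal{A}(t)\,dt \le C$ by Lemmas \ref{lem_L_infty_L_2}--\ref{lem_t_L2}, Gronwall yields the asserted bound $C_3 e^{C_3 T}\varepsilon^{1/2}$, completing the proof.
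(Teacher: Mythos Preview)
Your proposal is correct and follows essentially the same approach as the paper's own proof: apply $\partial_t\partial_x$ to both error equations, test against $\partial_t\partial_x U^\varepsilon$ and $\partial_t\partial_x W^\varepsilon$, use Hardy's inequality for the $\partial_y w^b$ contribution and Ladyzhenskaya for the quadratic error terms, then close by Gronwall using the lower-order bounds from Lemmas~\ref{lem_L_infty_L_2}--\ref{lem_t_L2}. One small slip: the $O(\varepsilon^{-1})$-loss terms that ultimately fix the $\varepsilon^{1/2}$ rate are those involving $\partial_x u^a$ and $\partial_t u^a$ (the paper's $L_6,L_7$), not $\partial_t w^a,\partial_x w^a$---the latter are handled via the Hardy trick and absorbed into the $\zeta$-dissipation---but your arithmetic and conclusion are unaffected.
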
 
\begin{proof}
Multiplying $\pp_t\pp_x\eqref{error equations}_1$ by $ \pp_t\pp_x U^\e$ and integrating the result by parts, we get
\begin{align}\label{eq_U_epsilon_tx_L_2}
    \notag &\frac{1}{2}\frac{\dd}{\dd t}\left\| \pp_t\pp_xU^\e \right\|^2+\left( \zeta+\e \right)  \left\| \nabla\pp_t\pp_xU^\e \right\|^2\\ \notag
   =&-\left\langle \pp_t\pp_xU^\e\cdot\nabla u^a,\pp_t\pp_xU^\e \right\rangle-\left\langle \pp_xU^\e\cdot\nabla \pp_tu^a,\pp_t\pp_xU^\e \right\rangle -\left\langle \pp_tU^\e\cdot\nabla \pp_xu^a,\pp_t\pp_xU^\e \right\rangle\\ \notag
    &-\left\langle U^\e\cdot\nabla \pp_t\pp_xu^a,\pp_t\pp_xU^\e \right\rangle-\left\langle \pp_t\pp_xu^a\cdot\nabla U^\e,\pp_t\pp_xU^\e \right\rangle-\left\langle \pp_xu^a\cdot\nabla \pp_t U^\e,\pp_t\pp_xU^\e \right\rangle\\ \notag
    & -\left\langle \pp_tu^a\cdot\nabla \pp_xU^\e,\pp_t\pp_xU^\e \right\rangle-\left\langle u^a\cdot\nabla \pp_t\pp_xU^\e,\pp_t\pp_xU^\e \right\rangle-\left\langle \pp_t\pp_xU^\e\cdot\nabla U^\e,\pp_t\pp_xU^\e \right\rangle\\ \notag
    &-\left\langle \pp_xU^\e\cdot\nabla \pp_tU^\e,\pp_t\pp_xU^\e \right\rangle -\left\langle \pp_tU^\e\cdot\nabla \pp_xU^\e,\pp_t\pp_xU^\e \right\rangle-\left\langle U^\e\cdot\nabla \pp_t\pp_xU^\e,\pp_t\pp_xU^\e \right\rangle\\ \notag
    &-\left\langle\nabla\pp_t\pp_xP^\varepsilon,\pp_t\pp_xU^\e  \right\rangle-2\zeta\left\langle \pp_t\pp_x W^\varepsilon,\nabla^\perp\cdot \pp_t\pp_xU^\e  \right\rangle+\left\langle \pp_t\pp_xF,\pp_t\pp_xU^\e \right\rangle\\
    =&\sum_{i=1}^{15}K_{i}.
\end{align}
For $K_1$, after integration by parts, we have
\begin{align*}
    K_1= & \,\left\langle  u^a\otimes \pp_t\pp_xU^\e ,\nabla \pp_t\pp_xU^\e \right\rangle
 \\
 \leq&\,\|\pp_t\pp_xU^\e\|\|u^a\|_{L^\infty}\|\nabla\pp_t\pp_xU^\e\|\le C\|\pp_t\pp_xU^\e\|^2\|u^a\|_{L^\infty}^2+\frac{1}{32}\zeta\|\nabla\pp_t\pp_xU^\e\|^2.
\end{align*}
Similarly, 
\begin{align*}
    K_2=\left\langle\pp_tu^a \otimes \pp_xU^\e ,\nabla\pp_t\pp_xU^\e \right\rangle\le C\left\| \pp_xU^\e \right\|^2\|\pp_tu^a\|^2_{L^\infty}+\frac{1}{32}\zeta\left\| \nabla\pp_t\pp_xU^\e \right\|^2,\\
    K_3=\left\langle  \pp_xu^a\otimes\pp_t U^\e,\nabla\pp_t\pp_xU^\e \right\rangle\le C\left\| \pp_t U^\e \right\|^2\|\pp_xu^a\|^2_{L^\infty}+\frac{1}{32}\zeta\left\| \nabla\pp_t\pp_xU^\e \right\|^2,\\
     K_4=\left\langle  \pp_t\pp_xu^a\otimes U^\e,\nabla\pp_t\pp_xU^\e \right\rangle\le C\left\| U^\e \right\|^2\|\pp_t \pp_xu^a\|^2_{L^\infty}+\frac{1}{32}\zeta\left\| \nabla\pp_t\pp_xU^\e \right\|^2.
\end{align*}
For $K_5$,  a direct estimate shows that 
\begin{align*}
    K_5\le \left\| \pp_t\pp_xu^a \right\|_{L^\infty}\left\| \nabla U^\e \right\|\left\|  \pp_t\pp_xU^\e\right\|\le C\left\| \nabla U^\e \right\|^2\|\pp_t\pp_xu^a\|^2_{L^\infty}+\left\| \pp_t\pp_xU^\e \right\|^2.
\end{align*}
Using integration by parts again, we have
\begin{align*}
    K_6&=\left\langle \pp_t U^\e\otimes\pp_xu^a,\nabla\pp_t\pp_xU^\e \right\rangle\le C\left\| \pp_t U^\e \right\|^2\|\pp_xu^a\|^2_{L^\infty}+\frac{1}{32}\zeta\left\| \nabla\pp_t\pp_xU^\e \right\|^2,\\
    K_7&=\left\langle \pp_xU^\e\otimes\pp_t u^a,\nabla\pp_t\pp_xU^\e \right\rangle\le C\left\| \pp_xU^\e \right\|^2\|\pp_t u^a\|^2_{L^\infty}+\frac{1}{32}\zeta\left\| \nabla\pp_t\pp_xU^\e \right\|^2,\\
    K_8&=0.
\end{align*}
Using the Ladyzhenskaya inequality, we have
\begin{align*}
    \notag K_9=&\left\langle  \pp_t\pp_xU^\e\otimes U^\e,\nabla\pp_t\pp_xU^\e \right\rangle\le\left\|  \pp_t\pp_xU^\e\right\|_{L^4} \left\| U^\e \right\|_{L^4} \left\|  \nabla\pp_t\pp_xU^\e\right\|\\ \notag 
    \le&\left\|  \pp_t\pp_xU^\e\right\|^\frac{1}{2} \left\|  \nabla\pp_t\pp_xU^\e\right\|^\frac{1}{2}  \left\| U^\e \right\|^\frac{1}{2} \left\| \nabla U^\e \right\|^\frac{1}{2}  \left\|  \nabla\pp_t\pp_xU^\e\right\|\\
    \le&C\left\| U^\e \right\|^2\left\| \nabla U^\e \right\|^2\left\|  \pp_t\pp_xU^\e\right\|^2 +\frac{1}{32}\zeta\|\nabla\pp_t\pp_xU^\e\|^2.
\end{align*}
Similarly, we have
\begin{align*}
    K_{10}\le&  \left\| \pp_xU^\e \right\|_{L^4}\left\| \pp_t U^\e \right\|_{L^4}\left\|\nabla\pp_t\pp_xU^\e  \right\|\\
    \le& C\left\| \pp_xU^\e \right\|^2\left\|  \nabla\pp_xU^\e \right\|^2+C\left\| \pp_t U^\e \right\|^2\left\|  \nabla\pp_tU^\e \right\|^2+\frac{1}{32}\zeta\left\| \nabla\pp_t\pp_xU^\e \right\|.
\end{align*}
and
\begin{align*}
    K_{11}\le C\left\| \pp_t U^\e \right\|^2\left\|  \nabla\pp_t U^\e \right\|^2+C\left\| \pp_xU^\e \right\|^2\left\|  \nabla\pp_xU^\e \right\|^2+\frac{1}{32}\zeta\left\| \nabla\pp_t\pp_xU^\e \right\|.
\end{align*}
Calculating directly, we have
\begin{align*}
    K_{12}=K_{13}=0
\end{align*}
Using the H\"older inequality, we have
\begin{align*}
    K_{14}+K_{15}\le C\left\| \pp_t\pp_xW^\e \right\|^2+C\left\| \pp_t\pp_xU^\e \right\|^2+\frac{1}{32}\zeta\left\| \nabla\pp_t\pp_xU^\e \right\|^2+\|\pp_t\pp_xF\|^2.
\end{align*}
Substituting the estimates of $K_1,\cdots,K_{15}$ into \eqref{eq_U_epsilon_tx_L_2}, we have 
\begin{align}\label{eq_U_epsilon_tx_L_2_1}
    \notag  &\frac{1}{2}\frac{\dd}{\dd t}\left\| \pp_t\pp_xU^\e \right\|^2+ \frac{5}{16} \zeta  \left\| \nabla\pp_t\pp_xU^\e \right\|^2 \\
   \leq &\,  C\left(\| u^a \|_{L^\infty}^2 + \left\| U^\e \right\|^2\left\| \nabla U^\e \right\|^2 +1 \right) \|(\pp_t\pp_xU^\e,\pp_t\pp_xW^\e)\|^2 + C\left\| \nabla U^\e \right\|^2\|\pp_t\pp_xu^a\|^2_{L^\infty} \notag \\
      &\, 
      +C\left(\left\| \pp_xU^\e \right\|^2\|\pp_tu^a\|^2_{L^\infty} + \left\| \pp_tU^\e \right\|^2\|\pp_xu^a\|^2_{L^\infty}+ \left\| U^\e \right\|^2\|\pp_t\pp_xu^a\|^2_{L^\infty}\right) \notag \\ 
      &\,+C\left(\| \pp_xU^\e \|^2 + \| \pp_t U^\e  \|^2 \right)\left(\|  \nabla\pp_xU^\e \|^2 +  \|  \nabla\pp_tU^\e  \|^2\right) +C \|\pp_t\pp_xF\|^2 \notag \\ 
      \leq &\,  C\left( \left\| \nabla U^\e \right\|^2 +1 \right) \|(\pp_t\pp_xU^\e,\pp_t\pp_xW^\e)\|^2    +C  \| (\nabla U^\e, \nabla\pp_xU^\e , \nabla\pp_tU^\e )  \|^2   +C \varepsilon^{\frac{3}{2}},
\end{align}
where Lemmas \ref{the estimate of approximate solution}-\ref{lem_t_L2} are used.

Next, multiplying $\pp_t\pp_x\eqref{error equations}_2$ by $ \pp_t\pp_x W^\e$ and integrating the result by parts, we get
\begin{align}\label{eq_W_epsilon_tx_L_2}
    \notag &\frac{1}{2}\frac{\dd}{\dd t}\left\| \pp_t\pp_xW^\e \right\|^2+ \e \left\| \nabla\pp_t\pp_xW^\e \right\|^2+4\zeta\|\pp_t\pp_xW^\e\|^2 \\ \notag
    =&-\left\langle \pp_t\pp_xU^\e\cdot\nabla w^a,\pp_t\pp_xW^\e \right\rangle-\left\langle \pp_xU^\e\cdot\nabla \pp_tw^a,\pp_t\pp_xW^\e \right\rangle -\left\langle \pp_tU^\e\cdot\nabla \pp_xw^a,\pp_t\pp_xW^\e \right\rangle\\ \notag
    &-\left\langle U^\e\cdot\nabla \pp_t\pp_xw^a,\pp_t\pp_xW^\e \right\rangle-\left\langle \pp_t\pp_xu^a\cdot\nabla W^\e,\pp_t\pp_xW^\e \right\rangle-\left\langle \pp_xu^a\cdot\nabla \pp_t W^\e,\pp_t\pp_xW^\e \right\rangle\\ \notag
    & -\left\langle \pp_tu^a\cdot\nabla \pp_xW^\e,\pp_t\pp_xW^\e \right\rangle-\left\langle u^a\cdot\nabla \pp_t\pp_xW^\e,\pp_t\pp_xW^\e \right\rangle-\left\langle \pp_t\pp_xU^\e\cdot\nabla W^\e,\pp_t\pp_xW^\e \right\rangle\\ \notag
    &-\left\langle \pp_xU^\e\cdot\nabla \pp_tW^\e,\pp_t\pp_xW^\e \right\rangle -\left\langle \pp_tU^\e\cdot\nabla \pp_xW^\e,\pp_t\pp_xW^\e \right\rangle-\left\langle U^\e\cdot\nabla \pp_t\pp_xW^\e,\pp_t\pp_xW^\e \right\rangle\\ \notag
    &+2\zeta\left\langle\nabla^\perp\cdot \pp_t\pp_xU^\e,\pp_t\pp_xW^\e  \right\rangle+\left\langle \pp_t\pp_xG,\pp_t\pp_xW^\e \right\rangle\\
    =&\sum_{i=1}^{14}L_{i}.
\end{align}
Using the Hardy inequality, we have
\begin{align*}
    L_1\le& \left\| \pp_t\pp_xU^\e \right\|\left\| (\nabla w^I,\pp_x w^B) \right\|_{L^\infty}\left\| \pp_t\pp_xW^\e \right\|+\left\| \nabla\pp_t\pp_xU^\e \right\|\left\| \langle z\rangle\pp_z w^B \right\|_{L^\infty}\left\| \pp_t\pp_xW^\e \right\|\\
    \le&C\left\| (\nabla w^I,\pp_x w^B,\langle z\rangle\pp_zw^B) \right\|_{L^\infty}^2\left\| \pp_t\pp_xW^\e \right\|^2+\left\|\pp_t\pp_xU^\e \right\|^2+\frac{1}{32}\zeta\left\| \nabla\pp_t\pp_xU^\e \right\|^2.
\end{align*}
Similarly, we can estimate $L_2$ and $L_3$ as follows
\begin{align*} 
    L_2\le& C\left\| (\nabla \pp_tw^I,\pp_x \pp_tw^B,\langle z\rangle\pp_z\pp_tw^B) \right\|_{L^\infty}^2\left\| \pp_t\pp_xW^\e \right\|^2+C\left\|\pp_xU^\e \right\|^2+ C\left\| \nabla\pp_xU^\e \right\|^2,\\
    L_3\le& C\left\| (\nabla \pp_xw^I,\pp_x \pp_xw^B,\langle z\rangle\pp_z\pp_xw^B) \right\|_{L^\infty}^2\left\| \pp_t\pp_xW^\e \right\|^2+C\left\|\pp_t U^\e \right\|^2+ C\left\| \nabla\pp_t U^\e \right\|^2. 
\end{align*}
For $L_4$, after integration by parts, we have
\begin{align*} 
    L_4\le& C\e^{-1}\|U^\e\|^2\|\pp_t\pp_xw^a\|^2_{L^\infty}+\frac{1}{32}\e\|\nabla\pp_t\pp_xW^\e\|^2.
\end{align*}
For $L_5$, a direct estimate shows that 
\begin{align*}
    L_5\le \left\| \pp_t\pp_xu^a \right\|_{L^\infty}\left\| \nabla W^\e \right\|\left\|  \pp_t\pp_xW^\e\right\|\le C\left\| \nabla W^\e \right\|^2\|\pp_t\pp_xu^a\|^2_{L^\infty}+\left\| \pp_t\pp_xW^\e \right\|^2.
\end{align*}
Similarly, after integration by parts, we have
\begin{align*}
    L_6&=\left\langle \pp_xu^a \pp_t W^\e,\nabla\pp_t\pp_xW^\e \right\rangle\le C\e^{-1}\left\| \pp_t W^\e \right\|^2\|\pp_xu^a\|^2_{L^\infty}+\frac{1}{32}\e\left\| \nabla\pp_t\pp_xW^\e \right\|^2.\\
    L_7&=\left\langle \pp_t u^a\pp_xW^\e,\nabla\pp_t\pp_xW^\e \right\rangle\le C\e^{-1}\left\| \pp_xW^\e \right\|^2\|\pp_t u^a\|^2_{L^\infty}+\frac{1}{32}\varepsilon\left\| \nabla\pp_t\pp_xW^\e \right\|^2.\\
    L_8&=0.
\end{align*}
For $L_9$, using the Ladyzhenskaya inequality, we have
\begin{align*}
    \notag L_9=&\left\langle  \pp_t\pp_xU^\e W^\e,\nabla\pp_t\pp_xW^\e \right\rangle\le\left\|  \pp_t\pp_xU^\e\right\|_{L^4} \left\| W^\e \right\|_{L^4} \left\|  \nabla\pp_t\pp_xW^\e\right\|\\ \notag 
    \le&\left\|  \pp_t\pp_xU^\e\right\|^\frac{1}{2} \left\|  \nabla\pp_t\pp_xU^\e\right\|^\frac{1}{2}  \left\| W^\e \right\|^\frac{1}{2} \left\| \nabla W^\e \right\|^\frac{1}{2}  \left\|  \nabla\pp_t\pp_xW^\e\right\|\\
    \le&C\e^{-2}\left\| W^\e \right\|^2\left\| \nabla W^\e \right\|^2\left\|  \pp_t\pp_xU^\e\right\|^2 +\frac{1}{32}\zeta\|\nabla\pp_t\pp_xU^\e\|^2+\frac{1}{32}\e\|\nabla\pp_t\pp_xW^\e\|^2.
\end{align*}
Similarly, we have
\begin{align*}
    L_{10}\le&  \left\| \pp_xU^\e \right\|_{L^4}\left\| \pp_t W^\e \right\|_{L^4}\left\|\nabla\pp_t\pp_xW^\e  \right\|\\
    \le& C\e^{-1}\left\| \pp_xU^\e \right\|^2\left\|  \nabla\pp_xU^\e \right\|^2+C\e^{-1}\left\| \pp_t W^\e \right\|^2\left\|  \nabla\pp_tW^\e \right\|^2+\frac{1}{32}\e\left\| \nabla\pp_t\pp_xW^\e \right\|,
\end{align*}
and
\begin{align*}
    L_{11}\le  C\e^{-1}\left\| \pp_t U^\e \right\|^2\left\|  \nabla\pp_t U^\e \right\|^2+C\e^{-1}\left\| \pp_xW^\e \right\|^2\left\|  \nabla\pp_xW^\e \right\|^2+\frac{1}{32}\e\left\| \nabla\pp_t\pp_xW^\e \right\|.
\end{align*}
Thanks to the divergence-free of $U^\varepsilon$, we have
\begin{align*}
    L_{12}=0.
\end{align*}
For the last two terms, using the H\"older inequality, we have
\begin{align*}
    L_{13}+L_{14}\le C\left\| \pp_t\pp_xW^\e \right\|^2+\frac{1}{32}\zeta\left\| \nabla\pp_t\pp_xU^\e \right\|^2+\|\pp_t\pp_xG\|^2.
\end{align*}
Substituting the estimates of $L_1,\cdots,L_{14}$ into \eqref{eq_W_epsilon_tx_L_2}, we have 
\begin{align}\label{eq_W_epsilon_tx_L_2_1}
    \notag &\frac{1}{2}\frac{\dd}{\dd t}\left\| \pp_t\pp_xW^\e \right\|^2+ \frac{1}{2} \e \left\| \nabla\pp_t\pp_xW^\e \right\|^2+4\zeta\|\pp_t\pp_xW^\e\|^2 \\ \notag
    \leq&\, C\Big(  \varepsilon^{\frac{1}{2}} \left\| \nabla W^\e \right\|^2 + 1\Big) \| (\pp_t\pp_x U^\e  ,\pp_t\pp_xW^\e  )\|^2  +  C (1+\varepsilon^{\frac{1}{2}}) \left\| (\nabla\pp_t U^\e,\nabla\pp_x U^\e) \right\|^2 \notag \\ 
     & + C\varepsilon^{\frac{1}{2}}\left\| (\nabla\pp_t W^\e,\nabla\pp_x W^\e) \right\|^2 + C\left\| \nabla W^\e \right\|^2 + \frac{3}{32}\zeta\left\| \nabla\pp_t\pp_xU^\e \right\|^2  + C\varepsilon^{\frac{1}{2}},
\end{align}
where Lemmas \ref{the estimate of approximate solution}-\ref{lem_t_L2} are used. 
Then, summing \eqref{eq_U_epsilon_tx_L_2_1} and \eqref{eq_W_epsilon_tx_L_2_1} up, using Lemmas \ref{the estimate of approximate solution}-\ref{lem_t_L2}, we have 
\begin{align*} 
     &\frac{\dd}{\dd t}\left\|\left(\pp_t\pp_xU^\e,\pp_t\pp_xW^\e \right) \right\|^2 +\zeta\|\nabla\pp_t\pp_xU^\e\|^2+\e\|\nabla\pp_t\pp_xW^\e\|^2 \\
     \leq&\, 
     C\Big( \left\| \nabla U^\e \right\|^2 + \varepsilon^{\frac{1}{2}} \left\| \nabla W^\e \right\|^2 + 1\Big) \| (\pp_t\pp_x U^\e  ,\pp_t\pp_xW^\e  )\|^2  +C  \| (\nabla U^\e, \nabla\pp_xU^\e , \nabla\pp_tU^\e )  \|^2 \\ 
     & + C\varepsilon^{\frac{1}{2}}\left\| (\nabla\pp_t W^\e,\nabla\pp_x W^\e) \right\|^2 + C\left\| \nabla W^\e \right\|^2    + C\varepsilon^{\frac{1}{2}},
\end{align*}
which together with Lemma \ref{lem_L_infty_L_2}-\ref{lem_t_L2} and Gronwall inequality, implies 
\begin{align*}
    \left\| \left(\pp_t\pp_xU^\e,\pp_t\pp_xW^\e\right)\right\|^2+\zeta \int_0^T\|\nabla \pp_t\pp_xU^\e\|^2\dd t+\e\int_0^T\|\nabla \pp_t\pp_xW^\e\|^2\dd t\le C_4e^{C_4T}\e^\frac{1}{2}.
\end{align*}
The proof is complete.
\end{proof}

\subsection{Convergence rate}\label{Convergence_rate}
\begin{lemma}\label{Linfty of U and W}
Under the assumptions of Theorem \ref{thm1}, for any $0 < \e < 1$, there exists a constant $C$ independent of $\e$, such that
    \begin{align*}
    \left\| U^\e\right\|_{L^\infty_{t}L^\infty_{xy}}\le C\e^\frac{7}{8} \text{~and~} \left\| W^\e\right\|_{L^\infty_{t}L^\infty_{xy}}\le C\e^\frac{5}{8}.
\end{align*}
\end{lemma}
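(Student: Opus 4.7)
The plan is to derive the two $L^\infty_{xy}$ bounds by combining an anisotropic Gagliardo--Nirenberg type inequality with the $L^2$-based estimates already obtained for $(U^\e,W^\e)$ and its tangential/time derivatives. The key interpolation inequality I will use is the half-space version
\begin{equation*}
\|f\|_{L^\infty_{xy}}\;\le\;C\,\|f\|_{L^2_{xy}}^{1/4}\,\|\pp_x f\|_{L^2_{xy}}^{1/4}\,\|\pp_y f\|_{L^2_{xy}}^{1/4}\,\|\pp_x\pp_y f\|_{L^2_{xy}}^{1/4},
\end{equation*}
which follows in two steps: first $\|g\|_{L^\infty_x L^2_y}^2\le 2\|g\|_{L^2_{xy}}\|\pp_x g\|_{L^2_{xy}}$ obtained from $\pp_x\int g^2\,\dd y = 2\int g\pp_xg\,\dd y$, and second $\|f\|_{L^\infty_{xy}}^2\le 2\|f\|_{L^\infty_x L^2_y}\|\pp_y f\|_{L^\infty_x L^2_y}$ obtained from $\pp_y f^2 = 2f\pp_y f$ together with $f(x,0,t)=0$.

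The previous lemmas already deliver $L^\infty_TL^2_{xy}$ control on $U^\e,\pp_xU^\e,W^\e,\pp_xW^\e$, with rates $\e^{5/4},\e^{3/4},\e^{5/4},\e^{3/4}$ respectively, and also $L^2_TL^2_{xy}$ control on $\nabla U^\e,\nabla\pp_xU^\e,\nabla\pp_tU^\e,\nabla\pp_t\pp_xU^\e$ (rates $\e^{5/4},\e^{3/4},\e^{3/4},\e^{1/4}$) together with $\nabla W^\e,\nabla\pp_xW^\e,\nabla\pp_tW^\e,\nabla\pp_t\pp_xW^\e$ (rates $\e^{3/4},\e^{1/4},\e^{1/4},\e^{-1/4}$, after dividing by $\sqrt{\e}$). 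What is missing are $L^\infty_TL^2_{xy}$ bounds on $\pp_yU^\e,\pp_x\pp_yU^\e,\pp_yW^\e,\pp_x\pp_yW^\e$. To produce these I will use the initial condition $(U^\e,W^\e)(0)=0$ (hence also $\pp_yU^\e(0)=\pp_x\pp_yU^\e(0)=0$, etc.) and the identity $\|f(t)\|_{L^2_{xy}}^2=2\int_0^t\langle\pp_s f,f\rangle\,\dd s$, which yields
\begin{equation*}
\|f\|_{L^\infty_TL^2_{xy}}\;\le\;\sqrt{2\,\|\pp_tf\|_{L^2_TL^2_{xy}}\|f\|_{L^2_TL^2_{xy}}}.
\end{equation*}
Applying this with $f=\pp_yU^\e$, $\pp_x\pp_yU^\e$, $\pp_yW^\e$, $\pp_x\pp_yW^\e$ and plugging in the $L^2_TL^2_{xy}$ rates listed above produces, respectively, $\e$, $\e^{1/2}$, $\e^{1/2}$, and $1$.

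Finally I substitute all eight $L^\infty_TL^2_{xy}$ bounds into the anisotropic interpolation inequality. For $U^\e$ the four exponents sum gives $\tfrac14(5/4+3/4+1+1/2)=7/8$, and for $W^\e$ the sum is $\tfrac14(5/4+3/4+1/2+0)=5/8$, producing exactly the announced rates $\e^{7/8}$ and $\e^{5/8}$. The main (and only) subtle point is the bookkeeping for $\pp_x\pp_yW^\e$: here the $L^2_T$ control on $\nabla\pp_t\pp_xW^\e$ is only $O(\e^{-1/4})$, which is why the resulting $L^\infty_T L^2_{xy}$ bound on $\pp_x\pp_yW^\e$ is merely $O(1)$ and drives the final $W^\e$ rate down to $\e^{5/8}$. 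All other terms in the proof are purely book-keeping and require no further estimates beyond those of Section~\ref{Estimates_error}.
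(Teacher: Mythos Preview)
Your argument is correct and is essentially the paper's own proof, only reorganized: both use the one-dimensional Agmon inequalities in $x$ and in $y$ together with time-interpolation (exploiting $(U^\e,W^\e)|_{t=0}=0$) to upgrade the $L^2_T$ control on $\nabla$-quantities to $L^\infty_T$, and both feed in exactly the rates from Lemmas~\ref{lem_L_infty_L_2}--\ref{lem_t_L2} and the $\pp_t\pp_x$-lemma. The only cosmetic difference is the order of operations---you package the two spatial interpolations into the single anisotropic inequality $\|f\|_{L^\infty_{xy}}\lesssim\|f\|^{1/4}\|\pp_xf\|^{1/4}\|\pp_yf\|^{1/4}\|\pp_x\pp_yf\|^{1/4}$ and then handle the time direction, whereas the paper interleaves these steps---but the inputs, the exponents, and the final rates $\e^{7/8},\e^{5/8}$ all coincide.
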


\begin{proof}
Using Sobolev inequality, we have
\begin{align*}
     \left\| W^\e\right\|_{L^\infty_{t}L^\infty_{xy}}\le \left\| W^\e\right\|_{L^\infty_{t}L^\infty_{x}L^2_y}^\frac{1}{2} \left\| \pp_y W^\e\right\|_{L^\infty_{t}L^\infty_{x}L^2_y}^\frac{1}{2}  .
\end{align*}
where
\begin{align*}
      \left\| W^\e\right\|_{L^\infty_{tx}L^2_y}\le C\|W^\e\|_{L_t^\infty L^2_{xy}}^\frac{1}{2} \|W^\e\|_{L_t^\infty H^1_xL^2_{y}}^\frac{1}{2}\le C\e^{\left(\frac{5}{4}+\frac{3}{4}\right)\times\frac{1}{2}}=C\e  ,
\end{align*}
and
\begin{align*}
      \left\| \pp_yW^\e\right\|_{L^\infty_{tx}L^2_y}\le& C\|\pp_yW^\e\|_{L^2_{ty}L^\infty_x}^\frac{1}{2} \|\pp_t\pp_yW^\e\|_{L^2_{ty}L^\infty_x}^\frac{1}{2}\\
      \le&C\|\nabla W^\e\|_{L^2_{txy}}^\frac{1}{4}\|\nabla W^\e\|_{L^2_{ty}H^1_x}^\frac{1}{4}\|\nabla \pp_tW^\e\|_{L^2_{txy}}^\frac{1}{4}\|\nabla \pp_tW^\e\|_{L^2_{ty}H^1_x}^\frac{1}{4}\\
      \le& C\e^{\left(\frac{3}{4}+\frac{1}{4}+\frac{1}{4}-\frac{1}{4}\right)\times\frac{1}{4}}=C\e^\frac{1}{4}  .
\end{align*}
Hence, we get
\begin{align*}
    \left\| W^\e\right\|_{L^\infty_{t}L^\infty_{xy}}\le \left\| W^\e\right\|_{L^\infty_{t}L^\infty_{x}L^2_y}^\frac{1}{2} \left\| \pp_y W^\e\right\|_{L^\infty_{t}L^\infty_{x}L^2_y}^\frac{1}{2}\le C\e^{\left(1+\frac{1}{4}\right)\times\frac{1}{2}}=C\e^\frac{5}{8}.
\end{align*}
Similarly, we have
\begin{align*}
    \left\| U^\e\right\|_{L^\infty_{t}L^\infty_{xy}}\le C\e^\frac{7}{8}.
\end{align*}
The proof is complete.
\end{proof}

\subsection{Proof of Theorem \ref{thm1}}\label{sec_bl}
Using Lemmas \ref{regularity of wb0ub11}-\ref{regularity_of_wb2ub31}, \ref{the estimate of approximate solution},  $\ref{Linfty of U and W}$, Sobolev inequality, the definition of $U^\varepsilon$ and $W^\varepsilon$, we have
\begin{align} \label{eq_convergence_rate_u}
        &\left\| u(x,y,t)-u^{I,0}(x,y,t) \right\|_{L^\infty_TL^\infty_{xy}} \notag \\
         \lesssim &\, \varepsilon^{\frac{1}{2}}\| u^{I,1}  \|_{L^\infty_TL^\infty_{xy}} + \varepsilon \| u^{I,2}  \|_{L^\infty_TL^\infty_{xy}} + 
        \varepsilon^{\frac{1}{2}}\| u_1^{b,1}  \|_{L^\infty_TH^2_{xz}}
        +\varepsilon \| u^{b,2}  \|_{L^\infty_TH^2_{xz}}
        +\varepsilon^{\frac{3}{2}}\| u^{b,3}  \|_{L^\infty_TH^2_{xz}} \notag \\ 
        &+\varepsilon^{2}\| u_2^{b,4}  \|_{L^\infty_TH^2_{xz}}
        +\varepsilon^{\frac{3}{2}}\|S\|_{L^\infty_TH^2_{xy}}
        +\|U^\e(x,y,t)\|_{L^\infty_TL^\infty_{xy}} \le C\e^\frac{1}{2}, 
\end{align}
and       
\begin{align}\label{eq_convergence_rate_w}
        &\left\| w(x,y,t)-w^{I,0}(x,y,t)-w^{b,0}\left(x,\frac{y}{\sqrt{\e}},t\right) \right\|_{L^\infty_TL^\infty_{xy}} \notag \\
        \lesssim&\,\varepsilon^{\frac{1}{2}}\| w^{I,1}  \|_{L^\infty_TL^\infty_{xy}} + \varepsilon \| w^{I,2}  \|_{L^\infty_TL^\infty_{xy}} 
        +\varepsilon^{\frac{1}{2}}\| w^{b,1}  \|_{L^\infty_TH^2_{xz}}
        +\varepsilon \| w^{b,2}  \|_{L^\infty_TH^2_{xz}}
        +\|W^\e(x,y,t)\|_{L^\infty_TL^\infty_{xy}} \notag \\ 
        \le&\, C\e^\frac{1}{2}.
\end{align}
The proof of Theorem \ref{thm1} is complete.
\subsection{Proof of Theorem \ref{thm2}}
We divide the proof of Theorem \ref{thm2} into the following three Lemmas.
\begin{lemma}\label{lem_boundarylayer}
     Under the assumptions of Theorem $\ref{thm1}$, we have 
\begin{align*} 
 \liminf\limits_{\e\to 0}\left\|u-u^{I,0} \right\|_{L^\infty(0,T ;L^\infty(\mathbb{R}^2_+))} = 0. 
\end{align*} 
Moreover, 
\begin{align*}
     \liminf\limits_{\e\to 0}\left\|w-w^{I,0}  \right\|_{L^\infty(0,T ;L^\infty(\mathbb{R}^2_+))}>0,
\end{align*}
if and only if 
$$  
w^{I,0}(x,0,t)\neq 0,~~~\text{for some}~~t\in [0,T].
$$
\begin{proof}
    From \eqref{eq_convergence_rate_u}, we have 
    \begin{equation*}
        0\leq \| u(x,y,t)-u^{I,0}(x,y,t)  \|_{L^\infty_TL^\infty_{xy}} \leq C\varepsilon^{\frac{1}{2}} \to 0,~~\text{as}~\varepsilon\to 0,
    \end{equation*}
    i.e. 
    \begin{equation}\label{eq_Bl_eq_u}
        \lim\limits_{\e\to 0}\left\|u-u^{I,0} \right\|_{L^\infty(0,T ;L^\infty(\mathbb{R}^2_+))}= 0.
    \end{equation}
    Moreover, noticing that $w^{b,0}$ is non-trivial if and only if  $w^{I,0}(x,0,t)\neq 0$ for some $t\in (0,T]$  by \eqref{wb0}. Then, 
    using \eqref{eq_convergence_rate_w}, we have 
\begin{align*}
     &\| w(x,y,t)-w^{I,0}(x,y,t)  \|_{L^\infty_TL^\infty_{xy}}   \\ 
     =&  \,\left\| w(x,y,t)-w^{I,0}(x,y,t)-w^{b,0}\left(x,\frac{y}{\sqrt{\e}},t\right)  + w^{b,0}\left(x,\frac{y}{\sqrt{\e}},t\right) \right\|_{L^\infty_TL^\infty_{xy}} \\ 
     \geq & \,\,\|w^{b,0}\|_{L^\infty_TL^\infty_{xy}}  - \left\| w(x,y,t)-w^{I,0}(x,y,t)-w^{b,0}\left(x,\frac{y}{\sqrt{\e}},t\right)  \right\|_{L^\infty_TL^\infty_{xy}}~~\to \|w^{b,0}\|_{L^\infty_TL^\infty_{xy}},~~~\text{as}~\varepsilon\to 0,
\end{align*}
which implies 
    \begin{equation}\label{eq_Bl_eq_w}
        \lim\limits_{\e\to 0}\left\|w-w^{I,0} \right\|_{L^\infty(0,T ;L^\infty(\mathbb{R}^2_+))} > 0,
    \end{equation}
    if and only if 
$$  
w^{I,0}(x,0,t)\neq 0,~~~\text{for some}~~t\in [0,T].
$$
The proof is complete.
\end{proof}
\end{lemma}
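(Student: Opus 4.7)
The plan is to reduce both assertions to direct consequences of the uniform convergence estimates established in Theorem \ref{thm1}, together with a uniqueness argument for the leading-order boundary layer profile problem \eqref{wb0}. The overall strategy is: the $u$-statement is immediate; the $w$-statement is handled by a triangle-inequality bracket that isolates the size of $w^{b,0}$, after which I analyze when $w^{b,0}$ vanishes.

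For the first claim, I would simply invoke \eqref{eq_convergence_u} (already stated as part of Theorem \ref{thm1}), which gives $\|u-u^{I,0}\|_{L^\infty_T L^\infty_{xy}} \le C\varepsilon^{1/2} \to 0$. This yields not just the $\liminf$ but the full limit being zero, so this part is essentially trivial.

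For the second claim, the crucial observation is that the $L^\infty$ norm is invariant under the boundary-layer rescaling: for each fixed $\varepsilon>0$,
\begin{equation*}
\left\| w^{b,0}\!\left(x,\tfrac{y}{\sqrt{\varepsilon}},t\right)\right\|_{L^\infty_T L^\infty_{xy}} = \|w^{b,0}(x,z,t)\|_{L^\infty_T L^\infty_{xz}},
\end{equation*}
which is a constant independent of $\varepsilon$. Applying the reverse triangle inequality and then \eqref{eq_convergence_w}, I would write
\begin{equation*}
\|w-w^{I,0}\|_{L^\infty_T L^\infty_{xy}} \;\ge\; \|w^{b,0}\|_{L^\infty_T L^\infty_{xz}} - \left\|w-w^{I,0}-w^{b,0}(\cdot,\cdot/\sqrt{\varepsilon},\cdot)\right\|_{L^\infty_T L^\infty_{xy}},
\end{equation*}
and let $\varepsilon \to 0$; the second term is $O(\varepsilon^{1/2})$ and vanishes. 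Thus it remains to characterize when $\|w^{b,0}\|_{L^\infty_T L^\infty_{xz}} > 0$.

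This reduces to a dichotomy for problem \eqref{wb0}. If $w^{I,0}(x,0,t) \equiv 0$, then $w^{b,0}$ solves the heat equation with zero initial and zero Dirichlet boundary data, so by uniqueness $w^{b,0} \equiv 0$ and therefore $\liminf = 0$. Conversely, if $w^{I,0}(x_0,0,t_0) \ne 0$ for some $(x_0,t_0)$, then the boundary trace $w^{b,0}(x_0,0,t_0)=-w^{I,0}(x_0,0,t_0) \ne 0$ immediately forces $\|w^{b,0}\|_{L^\infty} > 0$, completing the equivalence. I do not anticipate any genuine obstacle here: the main point to verify is simply the scale invariance of the $L^\infty$ norm under $y = \sqrt{\varepsilon}z$, after which the triangle inequality and \eqref{eq_convergence_w} do all the work.
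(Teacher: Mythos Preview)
Your proposal is correct and follows essentially the same route as the paper: both use \eqref{eq_convergence_u} directly for the $u$-part, and for the $w$-part both apply the reverse triangle inequality together with \eqref{eq_convergence_w}, reducing the question to whether $w^{b,0}$ is identically zero, which is then characterized via the Dirichlet data in \eqref{wb0}. Your explicit remark on the scale invariance of the $L^\infty$ norm under $y=\sqrt{\varepsilon}\,z$ is a point the paper leaves implicit, but otherwise the arguments coincide.
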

To prove \eqref{lim} with $\delta(\varepsilon)$ satisfying $\lim\limits_{\e\to 0}\delta^{-1}\e^\frac{1}{2}=0$, we have the following Lemma for the boundary layer correctors $(u^b,w^b)$.
\begin{lemma}\label{BL-b}
 Under the assumptions of Theorem $\ref{thm1}$, for any  non-negative smooth function $\delta(\varepsilon)$ with $ \lim\limits_{\e\to 0}\delta^{-1}\e^\frac{1}{2}=0$, we have
    \begin{align*}
        \lim\limits_{\e\to 0}\left\| \left( u^b,w^b\right) \right\|_{L^\infty(0,T; L^\infty(\mathbb{R}\times(\delta,+\infty)))}=0,~~\lim\limits_{\e\to 0}\left\| \left( u^I-u^{I,0},w^I-w^{I,0}\right) \right\|_{L^\infty(0,T; L^\infty(\mathbb{R}\times(\delta,+\infty)))}=0.
    \end{align*}
\end{lemma}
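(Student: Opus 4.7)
The plan is to treat the outer-corrector terms and the inner (boundary) layer terms separately. For the outer correctors, I would write $u^I - u^{I,0} = \e^{1/2} u^{I,1} + \e u^{I,2}$ and $w^I - w^{I,0} = \e^{1/2} w^{I,1} + \e w^{I,2}$; Lemmas \ref{regularity of I1} and \ref{regularity of I2}, combined with the Sobolev embedding $H^2_{xy} \hookrightarrow L^\infty_{xy}$, give $(u^{I,i}, w^{I,i})$ uniformly bounded in $L^\infty_T L^\infty_{xy}$, so
\begin{align*}
\bigl\|\bigl(u^I - u^{I,0},\, w^I - w^{I,0}\bigr)\bigr\|_{L^\infty_T L^\infty_{xy}} \le C\e^{1/2} \longrightarrow 0 \quad \text{as } \e \to 0.
\end{align*}
This bound holds on the entire half-plane, so \emph{a fortiori} on $\mathbb{R} \times (\delta,\infty)$, which gives the second assertion of the lemma (and in fact the better, unrestricted statement).

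For the boundary layer terms, the key observation is that Lemmas \ref{regularity of wb0ub11}, \ref{regularity of wb1ub21}, \ref{regularity_of_wb2ub31} provide, for \emph{every} $\ell \in \mathbb{N}$, bounds of the form $\langle z \rangle^\ell f^b \in L^\infty_T H^k_x H^1_z$ with $k \geq 2$, where $f^b$ stands for any of $w^{b,0}, w^{b,1}, w^{b,2}, u^{b,1}, u^{b,2}, u^{b,3}, u^{b,4}_2$. Applying the anisotropic $2$D Sobolev embedding $H^1_x H^1_z \hookrightarrow L^\infty_{xz}$ (a consequence of $H^s(\mathbb{R}) \hookrightarrow L^\infty(\mathbb{R})$ for $s > 1/2$, via a tensor-product argument and reflection extension to $\mathbb{R} \times \mathbb{R}_+$), one deduces
\begin{align*}
\|\langle z \rangle^\ell f^b\|_{L^\infty_T L^\infty_{xz}} \le C_\ell, \quad\text{hence}\quad |f^b(x,z,t)| \le C_\ell \langle z \rangle^{-\ell}.
\end{align*}
Since on $\{y > \delta\}$ one has $z = y/\sqrt{\e} > \delta/\sqrt{\e}$, it follows that $\|f^b\|_{L^\infty_T L^\infty(\mathbb{R} \times (\delta,\infty))} \le C_\ell (\sqrt{\e}/\delta)^\ell$, which tends to $0$ as $\e \to 0$ by the hypothesis $\sqrt{\e}/\delta \to 0$. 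Multiplying by the explicit $\e$-prefactors in $u^b = \e^{1/2} u^{b,1} + \e u^{b,2} + \e^{3/2} u^{b,3} + \e^2 (0, u^{b,4}_2)^\top$ and $w^b = w^{b,0} + \e^{1/2} w^{b,1} + \e w^{b,2}$ (all of order $\e^0$ or higher) completes the first assertion.

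I expect no serious obstacle: the lemma is essentially a bookkeeping argument combining the weighted regularity of Section \ref{Regularity of boundary layer profiles} with the boundary-layer rescaling $z = y/\sqrt{\e}$. The only care required is to fix $\ell$ (for instance $\ell = 1$) before sending $\e \to 0$, so that the constant $C_\ell$ stays finite while the geometric factor $(\sqrt{\e}/\delta)^\ell$ drives the bound to zero.
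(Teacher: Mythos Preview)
Your proof is correct and follows essentially the same approach as the paper: for the outer correctors you use the $\e^{1/2}$ prefactor together with uniform $L^\infty$ bounds, and for the inner profiles you exploit the weighted regularity $\langle z\rangle^\ell f^b \in L^\infty_T H^1_xH^1_z$ combined with $z>\delta/\sqrt{\e}\to\infty$ on $\{y>\delta\}$. The only cosmetic difference is that the paper shortcuts the argument for $u^b$ (since $u^b$ already carries an $\e^{1/2}$ prefactor, it vanishes on the whole half-plane without invoking the weighted decay) and applies the weighted estimate with $\ell=1$ only to $w^{b,0}$, whereas you treat all boundary layer profiles uniformly; both routes are equivalent.
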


\begin{proof}
To begin with, by the definition of $u^b$, one can easily find that 
\begin{align*}
         \lim\limits_{\e\to 0}\left\|  u^b  \right\|_{L^\infty(0,T; L^\infty(\mathbb{R}\times(\delta,+\infty)))}=0.
    \end{align*}
Next, to handle $w^b$,
without loss of generality, we can assume that $0<\varepsilon<1$. Then, from the assumption, there exists a constant $C>0$ such that $\delta > C\varepsilon^{\frac{1}{2}}$. Thus,
for any $(x,y,t)\in \mathbb{R}\times(\delta,+\infty)\times(0,T)$, we have 
    \begin{align*}
        \left\|w^{b,0}\left(x,\frac{y}{\sqrt{\e}},t\right) \right\|_{L^\infty(0,T; L^\infty_{xy}(\mathbb{R}\times(\delta,+\infty)))} 
         \leq&\, \frac{ \varepsilon^{\frac{1}{2}}}{\delta}\left\|\frac{y}{\sqrt{\e}}w^{b,0}\left(x,\frac{y}{\sqrt{\e}},t\right) \right\|_{L^\infty(0,T; L^\infty_{xy}(\mathbb{R}\times(\delta,+\infty)))}      \\ 
        \leq&\,\frac{ \varepsilon^{\frac{1}{2}}}{\delta}\left\|\langle z\rangle w^{b,0}\left(x,z,t\right) \right\|_{L^\infty(0,T; L^\infty_{xz}(\mathbb{R}\times(C\varepsilon^{\frac{1}{2}},+\infty)))}  \\ 
        \leq&\,\frac{ \varepsilon^{\frac{1}{2}}}{\delta}\left\|\langle z\rangle w^{b,0} \right\|_{L^\infty_TH^1_xH^1_z}~\to~0,~~\text{as} ~~\varepsilon\to 0,
    \end{align*}
    which, together with the definition of $w^b$, implies   
    \begin{align*}
         \lim\limits_{\e\to 0}\left\|  w^b  \right\|_{L^\infty(0,T; L^\infty(\mathbb{R}\times(\delta,+\infty)))}=0.
    \end{align*} 
    Finally, noticing that 
    \begin{equation*}
         u^I-u^{I,0} = \varepsilon^{\frac{1}{2}}(u^{I,1} + \varepsilon^{\frac{1}{2}}u^{I,2}),~~~ w^I-w^{I,0} = \varepsilon^{\frac{1}{2}}(w^{I,1} + \varepsilon^{\frac{1}{2}}w^{I,2}),
    \end{equation*}
    One can easily prove that 
    \begin{equation*}
        \lim\limits_{\e\to 0}\left\| \left( u^I-u^{I,0},w^I-w^{I,0}\right) \right\|_{L^\infty(0,T; L^\infty(\mathbb{R}\times(\delta,+\infty)))}=0.
    \end{equation*}
    The proof is complete.
\end{proof}
\begin{lemma}\label{BL-I}
Under the assumptions of Theorem $\ref{thm1}$, for any  non-negative smooth function $\delta(\varepsilon)$ with $ \lim\limits_{\e\to 0}\delta^{-1}\e^\frac{1}{2}=0$, we have
    \begin{align*} 
        \lim\limits_{\e\to 0}\left\| \left( u-u^{I,0},w-w^{I,0}\right) \right\|_{L^\infty(0,T; L^\infty(\mathbb{R}\times(\delta,+\infty)))}=0.
\end{align*}
\end{lemma}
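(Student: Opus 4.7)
The plan is to reduce the statement to the three ingredients already proved: the uniform $L^\infty$ estimate on the error terms $(U^\varepsilon,W^\varepsilon)$ from Lemma \ref{Linfty of U and W}, the pointwise-in-outer-region decay of the correctors in Lemma \ref{BL-b}, and the definition of the approximate solution $(u^a,w^a)$. First I would split the differences according to the ansatz as
\[
u-u^{I,0} \;=\; U^\varepsilon \;+\; (u^I-u^{I,0}) \;+\; u^b \;+\; \varepsilon^{3/2}S,\qquad
w-w^{I,0} \;=\; W^\varepsilon \;+\; (w^I-w^{I,0}) \;+\; w^b,
\]
which is just the rearrangement $u=u^a+U^\varepsilon$, $w=w^a+W^\varepsilon$ together with the definitions
\[
u^I-u^{I,0}=\varepsilon^{1/2}u^{I,1}+\varepsilon u^{I,2},\qquad w^I-w^{I,0}=\varepsilon^{1/2}w^{I,1}+\varepsilon w^{I,2}.
\]

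Next, on the outer region $\mathbb{R}\times(\delta,+\infty)$, I would bound each piece separately using the triangle inequality. The global $L^\infty$ bounds from Lemma \ref{Linfty of U and W} give
\[
\|U^\varepsilon\|_{L^\infty_TL^\infty(\mathbb{R}\times(\delta,+\infty))}\le \|U^\varepsilon\|_{L^\infty_TL^\infty_{xy}}\le C\varepsilon^{7/8},
\qquad \|W^\varepsilon\|_{L^\infty_TL^\infty(\mathbb{R}\times(\delta,+\infty))}\le C\varepsilon^{5/8},
\]
both of which vanish as $\varepsilon\to 0$. The outer correctors $u^I-u^{I,0}$ and $w^I-w^{I,0}$ vanish in the outer region (in fact globally) by Lemma \ref{BL-b}, and the same lemma disposes of $u^b$ and $w^b$ on $\mathbb{R}\times(\delta,+\infty)$ under the hypothesis $\varepsilon^{1/2}/\delta(\varepsilon)\to 0$. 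Finally, the auxiliary corrector $\varepsilon^{3/2}S$ is controlled by $\varepsilon^{3/2}\|S\|_{L^\infty_TH^2_{xy}}$, which tends to zero by Lemma \ref{the estimate of approximate solution}.

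Combining these four estimates via the triangle inequality yields
\[
\|(u-u^{I,0},w-w^{I,0})\|_{L^\infty(0,T;L^\infty(\mathbb{R}\times(\delta,+\infty)))}
\;\longrightarrow\; 0 \qquad\text{as}\ \varepsilon\to 0,
\]
which is the desired conclusion. There is no real obstacle: the ingredients are already in place, and the only minor point to mention is that the weighted estimate $\|\langle z\rangle w^{b,0}\|_{L^\infty_TH^1_xH^1_z}<\infty$ from Lemma \ref{regularity of wb0ub11}, used inside the proof of Lemma \ref{BL-b}, is exactly what couples the hypothesis $\varepsilon^{1/2}/\delta\to 0$ to the decay of $w^{b,0}(x,y/\sqrt{\varepsilon},t)$ on the outer region. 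The same argument also delivers the companion statement \eqref{liminf} together with Lemma \ref{lem_boundarylayer}, so that Definition \ref{Def} is verified and Theorem \ref{thm2} follows.
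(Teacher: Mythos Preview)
Your proposal is correct and follows essentially the same approach as the paper: you decompose $u-u^{I,0}$ and $w-w^{I,0}$ via the approximate solution into $(U^\varepsilon,W^\varepsilon)$, $(u^I-u^{I,0},w^I-w^{I,0})$, $(u^b,w^b)$, and $\varepsilon^{3/2}S$, then invoke Lemmas \ref{Linfty of U and W}, \ref{BL-b}, and \ref{the estimate of approximate solution} respectively to send each piece to zero. The paper's proof is the same triangle-inequality argument, just written more compactly.
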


\begin{proof}
Combining Lemmas  \ref{the estimate of approximate solution}, \ref{Linfty of U and W}, \ref{BL-b} and the definition of $(U^\varepsilon,W^\varepsilon)$, we have
\begin{align*}
    &\lim\limits_{\e\to 0}\left\| \left( u-u^{I,0},w-w^{I,0}\right) \right\|_{L^\infty(0,T; L^\infty(\mathbb{R}\times(\delta,+\infty)))}\\
    =&\,\lim\limits_{\e\to 0}\left\| \left( u^I-u^{I,0}+u^b+\e^\frac{3}{2}S+U^\e,w^I-w^{I,0}+w^b+W^\e\right) \right\|_{L^\infty(0,T; L^\infty(\mathbb{R}\times(\delta,+\infty)))}\\
    \le&\,\lim\limits_{\e\to 0}\left\| \left( u^I-u^{I,0}, w^I-w^{I,0}\right)\right\|_{L^\infty(0,T; L^\infty(\mathbb{R}\times(\delta,+\infty)))}+\lim\limits_{\e\to 0}\left\| \left(u^b,w^b\right)\right\|_{L^\infty(0,T; L^\infty(\mathbb{R}\times(\delta,+\infty)))}\\
    &+\lim\limits_{\e\to 0}\left\| \left(U^\varepsilon,W^\varepsilon\right)\right\|_{L^\infty(0,T; L^\infty(\mathbb{R}\times(\delta,+\infty)))} + \lim\limits_{\e\to 0}\varepsilon^{\frac{3}{2}}\left\| S \right\|_{L^\infty(0,T; L^\infty(\mathbb{R}\times(\delta,+\infty)))}  \\
    =&\,  0.
\end{align*}
The proof is complete.
\end{proof}

\section*{Appendix}

    \appendix

\section{Derivation of inner and outer profiles}\label{Appendix A}
In this section, we will give a formal derivation  of the inner and outer profiles with the corresponding initial and boundary conditions (see Chapter 4 of \cite{Holmes_2013} or Appendix A of \cite{Wang and Wen} for more detailed illustrations). \\[2mm]
 {\bfseries Step 1. The initial and boundary conditions.}
Substituting \eqref{expansions} into  initial and boundary conditions \eqref{uepsilon_ini_bou_cond}, we find the initial and boundary conditions should satisfy
\begin{align}\label{initial}
&(u^{I,0},w^{I,0})|_{t=0}=(u_0,w_0),\,\,
(u^{I,j},w^{I,j})|_{t=0}=0,\,j\ge1,~~~~
(u^{b,i},w^{b,i})|_{t=0}=0,\,i\ge 0,  
\end{align}
and
\begin{equation}\label{con1}  
     u^{I,i}(x,0,t)+u^{b,i}(x,0,t)=0,~~~~~w^{I,i}(x,0,t)+w^{b,i}(x,0,t)=0,~~ \forall i \ge 0.  
\end{equation} 
\noindent {\bfseries Step 2. Equations of  leading order profiles.}
Plugging $\eqref{expansions}$ into $\eqref{uepsilon}$, we have
\begin{align} 
  \displaystyle
&\pp_t  \sum_{j=0}^{+\infty}\e^\frac{j}{2}\left(u^{I,j}+u^{b,j}\right)+\left(\sum_{j=0}^{+\infty}\e^\frac{j}{2}\left(u^{I,j}+u^{b,j}\right)\cdot\nabla \right)\sum_{k=0}^{+\infty}\e^\frac{k}{2}\left(u^{I,k}+u^{b,k}\right)\notag\\  \displaystyle
&~~~~+\nabla \sum_{j=0}^{+\infty}\e^\frac{j}{2}\left(p^{I,j}+p^{b,j}\right)-\left(\e+\zeta \right)\D \sum_{j=0}^{+\infty}\e^\frac{j}{2}\left(u^{I,j}+u^{b,j}\right)=-2\zeta\nabla^\perp \sum_{j=0}^{+\infty}\e^\frac{j}{2}\left(w^{I,j}+w^{b,j}\right), \label{eq_u_I_b_s}\\[2mm] 
&\pp_t  \sum_{j=0}^{+\infty}\e^\frac{j}{2}\left(w^{I,j}+w^{b,j}\right) +\left(\sum_{j=0}^{+\infty}\e^\frac{j}{2}\left(u^{I,j}+u^{b,j}\right)\cdot\nabla \right)\sum_{k=0}^{+\infty}\e^\frac{k}{2}\left(w^{I,k}+w^{b,k}\right)\notag\\  
&~~~~+4\zeta \sum_{j=0}^{+\infty}\e^\frac{j}{2}\left(w^{I,j}+w^{b,j}\right)-\e\D \sum_{j=0}^{+\infty}\e^\frac{j}{2}\left(w^{I,j}+w^{b,j}\right)=2\zeta\nabla^\perp\cdot \sum_{j=0}^{+\infty}\e^\frac{j}{2}\left(u^{I,j}+u^{b,j}\right),  \label{eq_w_I_b_s} \\ \text{and} \notag \\[2mm] \displaystyle
&\di \sum_{j=0}^{+\infty}\e^\frac{j}{2}\left(u^{I,j}+u^{b,j}\right)=0. 
\label{eq_divu_I_b_s} 
\end{align} 
Formally, let $z\to +\infty$, we get 
\begin{equation}\label{eq_IIII}
    \begin{cases}
          \displaystyle  \partial_t u^{I,j} + \sum_{\ell=0}^j u^{I,\ell}\cdot\nabla u^{I,j-\ell} + \nabla p^{I,j} - \zeta\Delta u^{I,j}-2\zeta\nabla^\perp w^{I,j} = \Delta u^{I,j-2}, \\[1mm] 
          \displaystyle  \partial_t w^{I,j} + \sum_{\ell=0}^j u^{I,\ell}\cdot\nabla w^{I,j-\ell} + 4\zeta w^{I,j} + 2\zeta\nabla^\perp u^{I,j} = \Delta w^{I,j-2}, \\[1mm] 
           \displaystyle \mathrm{div} u^{I,j} = 0,
    \end{cases}
\end{equation}
for $j\geq 0$, where $u^{I,-1} = u^{I,-2}  = 0$ and $w^{I,-1} = w^{I,-2}  = 0$. 

\begin{lemma}\label{lem_b0}
    The zeroth order outer profiles $(u^{I,0},p^{I,0},w^{I,0})$  satisfies the limit problem $\eqref{I0}-\eqref{I0_cond}$.
    The zeroth order inner profiles $u^{b,0}$ and $p^{b,0}$ vanish identically, i.e., 
    \begin{equation}\label{eq_u_p_b0}
        u^{b,0} = 0,~p^{b,0} = 0.
    \end{equation}
    The zeroth order inner profile  $w^{b,0}$ satisfies problem \eqref{wb0}.
\end{lemma}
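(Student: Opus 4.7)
The plan is to derive the leading-order profiles by directly substituting the Prandtl-type ansatz \eqref{expansions} into the system \eqref{uepsilon}, using the chain-rule identity $\pp_y f(x,y/\sqrt{\e},t) = \e^{-1/2}\pp_z f(x,z,t)$ for every inner profile, and Taylor-expanding each outer profile $u^{I,j}(x,\e^{1/2}z,t)$, $p^{I,j}$, $w^{I,j}$ about $y=0$ whenever it is multiplied by an inner profile. The resulting formal series then splits into an outer part (obtained by letting $z\to\infty$, so every decaying inner contribution disappears) and a residual inner part that decays as $z\to\infty$. Setting the coefficient of each power $\e^{k/2}$ to zero in both parts yields a hierarchy of equations for the profiles, which together with \eqref{initial}--\eqref{con1} pin them down uniquely.

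At order $\e^0$ the outer part of the momentum, angular-velocity and divergence equations reduces exactly to \eqref{I0}, since all $\e$-dependent viscous contributions and all inner corrections vanish in the outer region. The initial condition $(u^{I,0},w^{I,0})|_{t=0}=(u_0,w_0)$ is immediate from \eqref{initial}, and once $u^{b,0}=0$ is established the relation \eqref{con1} with $j=0$ forces $u^{I,0}(x,0,t)=0$. Note that \eqref{I0} imposes no boundary condition for $w^{I,0}$, and it is precisely this absence that triggers the mismatch at $y=0$ and forces the nontrivial boundary layer profile $w^{b,0}$.

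For the inner profile I first extract the most singular contribution: the term $-(\e+\zeta)\Delta u^b$ produces the leading singularity $-\zeta\e^{-1}\pp_z^2 u^{b,0}$, and matching at order $\e^{-1}$ gives $\pp_z^2 u^{b,0}=0$; the decay $u^{b,0}\to 0$ as $z\to\infty$ forces $u^{b,0}=0$. The divergence identity at order $\e^{-1/2}$ yields $\pp_z u_2^{b,0}=0$, consistent with the previous conclusion. Next, the $\e^{-1/2}$-balance of the vertical momentum equation, once all terms involving $u^{b,0}$ are removed, reduces to $\pp_z p^{b,0}=0$, so $p^{b,0}=0$ by decay. For $w^{b,0}$ I collect the inner residue of the $w$-equation at order $\e^0$. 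All convective contributions from $(u\cdot\nabla)w$ vanish because $u^{I,0}|_{y=0}=0$, and because divergence-freeness of $u^{I,0}$ combined with the boundary condition gives $\overline{\pp_y u_2^{I,0}}=-\overline{\pp_x u_1^{I,0}}=0$. What remains is
\begin{equation*}
\pp_t w^{b,0}+4\zeta w^{b,0}-\pp_z^2 w^{b,0}=-2\zeta\pp_z u_1^{b,1}.
\end{equation*}
To close this I use the $\e^{-1/2}$-balance of the $u_1$-equation, which reads $-\zeta\pp_z^2 u_1^{b,1}=2\zeta\pp_z w^{b,0}$; a single integration in $z$ together with decay at $z=+\infty$ yields $\pp_z u_1^{b,1}=-2 w^{b,0}$. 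Substituting this identity back exactly cancels the reaction term $4\zeta w^{b,0}$ and leaves the heat equation $\pp_t w^{b,0}-\pp_z^2 w^{b,0}=0$ of \eqref{wb0}. The boundary value $w^{b,0}(x,0,t)=-w^{I,0}(x,0,t)$ is \eqref{con1} at $j=0$, and the initial value $w^{b,0}(x,z,0)=0$ is \eqref{initial}.

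The main delicate point is the closure step just described: at the leading order the equation for $w^{b,0}$ genuinely involves the next-order corrector $u_1^{b,1}$, and one has to invoke the $u_1$-balance one order earlier in $\e$ to eliminate it before the system decouples. A secondary bookkeeping issue is the expansion of outer profiles inside the layer: all cancellations of boundary traces such as $\overline{u^{I,0}}=0$ and $\overline{\pp_y u_2^{I,0}}=0$ must be invoked at the correct order, since otherwise spurious transport terms would pollute the leading heat equation for $w^{b,0}$.
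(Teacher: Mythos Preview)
Your approach is essentially the paper's: substitute the ansatz, separate outer and inner hierarchies, and use the $\e^{-1/2}$-balance of the $u_1$-equation to obtain $\pp_z u_1^{b,1}=-2w^{b,0}$, which then cancels the $4\zeta w^{b,0}$ term and yields the heat equation \eqref{wb0}. One small bookkeeping point you skip: both the conclusion $\pp_z p^{b,0}=0$ from the $\e^{-1/2}$ vertical-momentum balance and the vanishing of the convective term $(\overline{u_2^{I,1}}+u_2^{b,1})\pp_z w^{b,0}$ at order $\e^0$ in the $w$-equation additionally require $u_2^{b,1}=0$ (from the inner divergence relation $\pp_x u_1^{b,0}+\pp_z u_2^{b,1}=0$) and hence $\overline{u_2^{I,1}}=0$ via \eqref{con1}; the paper records these explicitly.
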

\begin{proof}
    Near the boundary, subtracting \eqref{eq_IIII}$_1$ from \eqref{eq_u_I_b_s}, then using the fact $y = \sqrt{\varepsilon}z$ and Taylor's formula
    $$
    f(x,y,t) = f(x,\sqrt{\varepsilon}z,t) = \sum_{\ell=0}^{\infty}\frac{1}{\ell!}(\sqrt{\varepsilon}z)^\ell\partial_y^\ell f(x,0,t),
    $$
    for the outer layer profiles $(u^{I,j},p^{I,j},w^{I,j})$, we get
 \begin{equation*}
        \sum_{j=-2}^\infty \varepsilon^{j/2} \mathcal{F}^j(x,z,t) = 0,
    \end{equation*}
    where
    \begin{align*} 
            \mathcal{F}^{-2} =&\,  -\zeta \partial_z^2 u^{b,0},\\[2mm]
            \mathcal{F}^{-1}=&\,   (u_2^{I,0}(x,0,t) + u_2^{b,0})\partial_z u^{b,0} + (0,\partial_zp^{b,0})^\top -\zeta \partial_z^2u^{b,1} - (2\zeta\partial_z w^{b,0},0)^\top,\\[2mm]
            \mathcal{F}^{0}=&\,   \partial_t u^{b,0} + u^{b,0}\cdot \nabla u^{I,0}(x,0,t) + (u_1^{I,0}(x,0,t) + u^{b,0}_1)\partial_xu^{b,0} +  (u_2^{I,0}(x,0,t) + u^{b,0}_2)\partial_zu^{b,1}\\ 
            &+(u_2^{I,1}(x,0,t) + u^{b,1}_2)\partial_zu^{b,0} + (\partial_xp^{b,0},\partial_zp^{b,1})^\top - \zeta\partial_x^2 u^{b,0} -\zeta\partial_z^2u^{b,2} - \partial_z^2u^{b,0}\\
            &-(2\zeta\partial_zw^{b,1} ,-2\zeta\partial_xw^{b,0} )^\top + z\partial_yu^{I,0}_2(x,0,t)\partial_zu^{b,0},\\[2mm]
            \cdots\cdots
    \end{align*}
    Moreover, from \eqref{eq_divu_I_b_s} and \eqref{eq_IIII}$_2$, we have 
    \begin{equation}\label{eq_divfree_b}
        \partial_x u_1^{b,j} + \partial_z u_2^{b,j+1} = 0,~~~\forall j\geq 0. 
    \end{equation} 
    Hence, $\mathcal{F}^{-2} = 0$ and \eqref{eq_divfree_b} yield
    \begin{eqnarray}\label{eq_u_B_00}
     u^{b,0} = 0,~~~u_2^{b,1} = 0,
    \end{eqnarray}
    which together with the boundary condition \eqref{con1} implies 
    \begin{equation}\label{eq_boundary_u_I_0}
        u^{I,0}(x,0,t) = 0,~~~u_2^{I,1}(x,0,t) = 0.
    \end{equation}
Letting $j = 0$ in \eqref{eq_IIII}, combining with \eqref{initial} and \eqref{eq_boundary_u_I_0}, we find that $(u^{I,0},p^{I,0},w^{I,0})$  satisfies the limit problem $\eqref{I0}-\eqref{I0_cond}$. Next, from $\mathcal{F}^{-1} = 0$ and \eqref{eq_u_B_00}, we have 
\begin{equation*}
    \begin{pmatrix}
        0\\
        \partial_z p^{b,0}
    \end{pmatrix}
    -\begin{pmatrix}
        \zeta\partial_z^2u^{b,1}_1\\
        0
    \end{pmatrix}
    -\begin{pmatrix}
        2\zeta\partial_z w^{b,0}\\
        0
    \end{pmatrix}
    =0,
\end{equation*}
which implies 
\begin{equation}\label{eq_p_b_0_u_w}
    p^{b,0} = 0,~~\partial_z u^{b,1}_1 + 2w^{b,0} = 0.
\end{equation}
Now, we are in the position to deduce the equation of $w^{b,0}$. Similar to the derivation of $\mathcal{F}^{j}$, from \eqref{eq_u_I_b_s}  and \eqref{eq_IIII}$_2$, we obtain 
\begin{equation*}
        \sum_{j=-1}^\infty \varepsilon^{j/2} \mathcal{G}^j(x,z,t) = 0,
    \end{equation*}
    where
    \begin{align*} 
            \mathcal{G}^{-1} =\,&  (u_2^{I,0}(x,0,t) + u^{b,0}_2)\partial_zw^{b,0} +2\zeta\partial_z u_1^{b,0},\\[2mm]
            \mathcal{G}^{0}=\,&  \partial_tw^{b,0} + u^{b,0}\cdot\nabla w^{I,0}(x,0,t) + (u_1^{I,0}(x,0,t) + u^{b,0}_1)\partial_xw^{b,0} +(u_2^{I,0}(x,0,t) + u^{b,0}_2)\partial_zw^{b,1}\\[2mm]
            &+(u_2^{I,1}(x,0,t) + u^{b,1}_2)\partial_zw^{b,0} +4\zeta w^{b,0} -\partial_z^2w^{b,0} -2\zeta\partial_x u_2^{b,0} +2\zeta\partial_z u_1^{b,1} + z\partial_yu^{I,0}_2(x,0,t)\partial_zw^{b,0},\\
            \cdots\cdots 
    \end{align*}
From $\mathcal{G}^{0} = 0,$ \eqref{eq_u_B_00}--\eqref{eq_p_b_0_u_w}, the initial condition \eqref{initial} and the boundary condition \eqref{con1} , we find that 
$w^{b,0}$ satisfies the following problem 
\begin{equation*} 
    \begin{cases}
        \pp_tw^{b,0}-\pp_z^2 w^{b,0}=0,\\[1mm]
        w^{b,0}(x,z,0)=0,~w^{b,0}(x,0,t)=-w^{I,0}(x,0,t).
    \end{cases}
\end{equation*}
The proof is complete.
\end{proof} 
\noindent {\bfseries Step 3. Equations of  first order profiles.}
From \eqref{eq_divfree_b}, \eqref{eq_u_B_00},  \eqref{eq_p_b_0_u_w}  and , we have the following Corollary.
\begin{corollary}\label{coro_u_b_1}
    The first order boundary layer profile $u^{b,1}$ satisfies 
    \begin{equation}\label{eq_u_b_1_app}
    u^{b,1}_1 = 2\int_z^{+\infty} w^{b,0}(x,s,t)\mathrm{d}s,~~~u^{b,1}_2 = 0. 
    \end{equation}
    As a consequence (using \eqref{con1}), we have the following boundary conditions:
    \begin{equation}\label{eq_u_I_1_boundary}
        u_{1}^{I,1} (x,0,t)= -2\int_0^{+\infty} w^{b,0}(x,s,t)\mathrm{d}s,~~~u_{2}^{I,1} (x,0,t)= 0. 
    \end{equation}
\end{corollary}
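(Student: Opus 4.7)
The plan is to assemble the two relations already derived in the proof of Lemma \ref{lem_b0} and then invoke the boundary compatibility \eqref{con1}. First, I would recall that $u^{b,1}_2 = 0$ is an immediate consequence of the identity $u^{b,1}_2 = 0$ established in \eqref{eq_u_B_00}, which came from combining the leading-order divergence-free condition \eqref{eq_divfree_b} at $j = 0$ with $u^{b,0} = 0$. So the second component requires no new argument.

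For the first component, the key input is the ODE $\partial_z u_1^{b,1} + 2w^{b,0} = 0$ obtained in \eqref{eq_p_b_0_u_w}. Treating $(x,t)$ as parameters and integrating in $z$ from a generic point $z$ to $+\infty$ gives
\begin{equation*}
u_1^{b,1}(x,+\infty,t) - u_1^{b,1}(x,z,t) = -2\int_z^{+\infty} w^{b,0}(x,s,t)\,\mathrm{d}s.
\end{equation*}
Because inner profiles are assumed to decay to zero as $z \to +\infty$, we have $u_1^{b,1}(x,+\infty,t) = 0$, yielding the stated formula $u_1^{b,1}(x,z,t) = 2\int_z^{+\infty} w^{b,0}(x,s,t)\,\mathrm{d}s$. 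The integral is well-defined because $w^{b,0}$ has sufficient decay in $z$ (to be verified rigorously in Lemma \ref{regularity of wb0ub11}), but at the formal level of the asymptotic expansion this is assumed throughout Section \ref{Asymptotic analysis}.

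Finally, the boundary conditions on $u^{I,1}$ follow by evaluating the formulas for $u^{b,1}$ at $z = 0$ and substituting into the matching condition \eqref{con1} with $i = 1$, namely $u^{I,1}(x,0,t) + u^{b,1}(x,0,t) = 0$. This gives $u_1^{I,1}(x,0,t) = -2\int_0^{+\infty} w^{b,0}(x,s,t)\,\mathrm{d}s$ and $u_2^{I,1}(x,0,t) = 0$. There is no real obstacle here — the corollary is a direct bookkeeping consequence of Lemma \ref{lem_b0}. The only subtlety worth flagging explicitly is the justification for passing the boundary value $u_1^{b,1}(x,+\infty,t) = 0$, which relies on the standing decay assumption for inner profiles stated just after \eqref{expansions}.
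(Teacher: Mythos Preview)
Your proposal is correct and follows exactly the approach indicated by the paper, which simply cites \eqref{eq_divfree_b}, \eqref{eq_u_B_00}, and \eqref{eq_p_b_0_u_w} before stating the corollary without further detail. You have correctly filled in the integration step and the application of the matching condition \eqref{con1}, and your remark about the decay assumption on inner profiles is the right justification for discarding the boundary term at $z=+\infty$.
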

\begin{lemma}\label{lem_b1}
    The first order outer profiles $(u^{I,1},p^{I,1},w^{I,1})$  satisfies  problem $\eqref{I1}$.
    The first order inner profile $p^{b,1} = 0$. The first order inner profile $w^{b,1}$  satisfies problem \eqref{wb1}.
\end{lemma}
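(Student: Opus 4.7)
}
The plan is to continue the matched asymptotic expansion begun in Lemma \ref{lem_b0}, collecting terms at the next order in $\varepsilon^{1/2}$ for the momentum, pressure, and angular-velocity equations, and then reading off each of the three assertions from the resulting relations. First I would obtain the outer system \eqref{I1} by simply setting $j=1$ in the bulk relations \eqref{eq_IIII}: since $u^{I,-1}=w^{I,-1}=0$, the right-hand sides vanish, and the system is linear in $(u^{I,1},p^{I,1},w^{I,1})$ with the zeroth-order profile as coefficient. The initial data $(u^{I,1},w^{I,1})|_{t=0}=0$ comes from \eqref{initial}, and the Dirichlet boundary condition on $u^{I,1}$ is precisely \eqref{eq_u_I_1_boundary} from Corollary \ref{coro_u_b_1}. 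This gives problem \eqref{I1}.

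Second, to prove $p^{b,1}=0$, I would revisit the expression for $\mathcal{F}^{0}$ from the proof of Lemma \ref{lem_b0} and impose $\mathcal{F}^{0}=0$, after substituting the already-known vanishing facts $u^{b,0}=0$, $u_2^{b,1}=0$, $p^{b,0}=0$, $u^{I,0}(x,0,t)=0$, $u_2^{I,1}(x,0,t)=0$. The first component then collapses to $-\zeta\partial_z^2 u^{b,2}_1-2\zeta\partial_z w^{b,1}=0$, consistent with the analogue of \eqref{eq_p_b_0_u_w} at the next order. The key computation is the second component, which reduces to
\begin{equation*}
\partial_z p^{b,1}=\zeta\,\partial_z^2 u^{b,2}_2-2\zeta\,\partial_x w^{b,0}.
\end{equation*}
From the divergence-free relation \eqref{eq_divfree_b} at $j=1$, $\partial_x u^{b,1}_1+\partial_z u^{b,2}_2=0$, combined with $u^{b,1}_1=2\int_z^{\infty}w^{b,0}\,\mathrm d s$ from \eqref{eq_u_b_1_app} and the decay of boundary-layer profiles as $z\to\infty$, one obtains $\partial_z^2 u^{b,2}_2=2\partial_x w^{b,0}$. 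Substituting this back cancels the right-hand side exactly, so $\partial_z p^{b,1}=0$. The matching requirement $p^{b,1}\to 0$ as $z\to\infty$ then forces $p^{b,1}\equiv 0$.

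Third, for the equation of $w^{b,1}$, I would repeat the construction of the $\mathcal{G}^{j}$-hierarchy used in Lemma \ref{lem_b0}, but now at order $\varepsilon^{1/2}$. Substituting the expansion \eqref{expansions} into \eqref{eq_w_I_b_s}, subtracting the outer $j=1$ equation from \eqref{eq_IIII}, Taylor-expanding the outer coefficients $u^{I,0},u^{I,1},w^{I,0},w^{I,1}$ in powers of $y=\sqrt\varepsilon\,z$ about $y=0$, and using $u^{b,0}=0$, $u^{b,1}_2=0$, $u^{I,0}(x,0,t)=0$, $u^{I,1}_2(x,0,t)=0$ together with $\partial_z u^{b,1}_1+2w^{b,0}=0$, the coefficient of $\varepsilon^{1/2}$ organizes itself into
\begin{equation*}
\partial_t w^{b,1}-\partial_z^2 w^{b,1}=g^{b,1},
\end{equation*}
with the explicit source term $g^{b,1}$ displayed in the excerpt. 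The condition $w^{b,1}|_{t=0}=0$ is from \eqref{initial}, while the Dirichlet datum $w^{b,1}(x,0,t)=-w^{I,1}(x,0,t)$ is forced by the $\varepsilon^{1/2}$ coefficient in the matching condition \eqref{con1}. This produces problem \eqref{wb1}.

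The main obstacle is the bookkeeping in the third step: one must correctly identify, at order $\varepsilon^{1/2}$, which Taylor-expansion remainders of the outer profiles contribute and which are absorbed into yet higher-order correctors, and to verify that every term surviving the cancellations matches the prescribed $g^{b,1}$. Once the vanishing facts from Lemma \ref{lem_b0} and Corollary \ref{coro_u_b_1} are invoked systematically, the assembly of $g^{b,1}$ is a finite but careful algebraic verification rather than a genuine analytic difficulty.
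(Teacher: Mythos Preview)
Your proposal is correct and follows essentially the same approach as the paper: set $j=1$ in \eqref{eq_IIII} for the outer system, use $\mathcal{F}^{0}=0$ together with \eqref{eq_divfree_b} and \eqref{eq_p_b_0_u_w} to show $p^{b,1}=0$ and obtain the next-order relation $\partial_z u^{b,2}_1+2w^{b,1}=0$, and then read off the $w^{b,1}$-equation from $\mathcal{G}^{1}=0$. One small clarification for step three: the cancellation that removes the damping term $4\zeta w^{b,1}+2\zeta\partial_z u^{b,2}_1$ from $\mathcal{G}^{1}$ requires the \emph{new} relation $\partial_z u^{b,2}_1+2w^{b,1}=0$ you derived in step two, not the older identity $\partial_z u^{b,1}_1+2w^{b,0}=0$ you cite there.
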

\begin{proof}
    Taking $j = 1$ in \eqref{eq_IIII}, we have $(u^{I,1},u^{I,1},u^{I,1})$  satisfies 
\begin{equation*}
    \begin{cases}
        \pp_t  u^{I,1}+\left( u^{I,1}\cdot\nabla\right)u^{I,0}+\left( u^{I,0}\cdot\nabla\right)u^{I,1}+\nabla p^{I,1}-\zeta\D u^{I,1}-2\zeta\nabla^\perp w^{I,1}=0,\\ 
        \pp_t  w^{I,1}+\left( u^{I,1}\cdot\nabla\right)w^{I,0}+\left( u^{I,0}\cdot\nabla\right)w^{I,1}+4\zeta w^{I,1}+2\zeta\nabla^\perp\cdot u^{I,1}=0,\\
        \di u^{I,1}=0,
    \end{cases}
\end{equation*}
which together with \eqref{initial}, \eqref{con1} and \eqref{eq_u_I_1_boundary} implies that $(u^{I,1},u^{I,1},u^{I,1})$  satisfies the linear problem \eqref{I1}.  Next, from $\mathcal{F}^{0} = 0$, Lemmas \ref{lem_b0}, \ref{lem_b1} and Corollary \ref{coro_u_b_1}, we have 
\begin{equation*}
    \begin{pmatrix}
        0\\
        \partial_z p^{b,1}
    \end{pmatrix}
    -\begin{pmatrix}
        \zeta\partial_z^2u^{b,2}_1\\
        \zeta\partial_z^2u^{b,2}_2
    \end{pmatrix}
    -\begin{pmatrix}
        2\zeta\partial_z w^{b,1}\\
        -2\zeta\partial_x w^{b,0}
    \end{pmatrix}
    =0,
\end{equation*}
which implies 
\begin{equation}\label{eq_u_w_2}
    \partial_z u^{b,2}_1 + 2w^{b,1} = 0.
\end{equation}
and 
\begin{equation}\label{eq_p_b_1}
      p^{b,1} = -\zeta\int_z^\infty\big(\partial_z^2u^{b,2}_2 -2 \partial_x w^{b,0}  \big)\mathrm{d}z =  \zeta\int_z^\infty\partial_x\big(\partial_zu^{b,1}_1 +2  w^{b,0}  \big)\mathrm{d}z = 0, 
\end{equation}
where \eqref{eq_divfree_b} and \eqref{eq_p_b_0_u_w} are used. Moreover, from $\mathcal{G}^{1} = 0$, Lemmas \ref{lem_b0}, \ref{lem_b1}, Corollary \ref{coro_u_b_1},  and \eqref{eq_u_w_2}, we have 
\begin{equation}\label{eq_w_b_1_orign}
    \begin{split}
        &\pp_t  w^{b,1}+\overline{u^{I,1}_1}\pp_x w^{b,0}+u^{b,1}_1\overline{\pp_x w^{I,0}}+u^{b,1}_1\pp_xw^{b,0}+\left(\overline{ u^{I,2}_2}+u^{b,2}_2 \right)\pp_zw^{b,0} \\ 
    &-\pp_z^2w^{b,1} +\overline{\pp_y^2u^{I,0}_2}\frac{1}{2}z^2\pp_zw^{b,0} +\overline{\pp_yu^{I,0}_1}z\pp_xw^{b,0}+\overline{\pp_yu^{I,1}_2}z\pp_zw^{b,0}
=0,
    \end{split}
\end{equation}
which together  \eqref{initial} and \eqref{con1} implies that $w^{b,1}$  satisfies problem \eqref{wb1}. The proof is complete.
\end{proof}
 
\noindent {\bfseries Step 4. Equations of  second order profiles.}
From \eqref{eq_divfree_b}, \eqref{eq_u_B_00}, \eqref{eq_boundary_u_I_0}, \eqref{eq_u_b_1_app} and \eqref{eq_u_w_2}, we have the following Corollary.
\begin{corollary}\label{coro_u_b_2}
    The first order boundary layer profile $u^{b,2}$ satisfies 
    \begin{equation}\label{eq_u_b_2}
    u^{b,2}_1 = 2\int_z^{+\infty} w^{b,1}(x,s,t)\mathrm{d}s,~~~u^{b,2}_2 = \int_z^{+\infty} \partial_xu_1^{b,1}(x,s,t)\mathrm{d}s = 2\int_z^\infty \int_\tau^{+\infty} \partial_xw^{b,0}(x,s,t)\mathrm{d}s\mathrm{d}\tau. 
    \end{equation}
    As a consequence (using \eqref{con1}), we have the following boundary conditions:
    \begin{equation}\label{eq_u_I_2_boundary}
        u_{1}^{I,2} (x,0,t)= -2\int_0^{+\infty} w^{b,1}(x,s,t)\mathrm{d}s,~~~u_{2}^{I,2} (x,0,t)= -2\int_0^\infty \int_\tau^{+\infty} \partial_xw^{b,0}(x,s,t)\mathrm{d}s\mathrm{d}\tau. 
    \end{equation}
\end{corollary}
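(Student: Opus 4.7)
The plan is to obtain the two identities for $u^{b,2}_1$ and $u^{b,2}_2$ by integrating two already-derived pointwise relations from $z$ to $+\infty$ and using the far-field decay $u^{b,j}(x,z,t)\to 0$ as $z\to+\infty$ that was posited for the boundary layer profiles. The boundary identities for $u^{I,2}(x,0,t)$ will then follow by evaluating at $z=0$ and invoking the matching condition \eqref{con1}.

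First, I would recall that the vanishing of $\mathcal{F}^{0}$ in the boundary layer expansion produced the relation $\partial_z u^{b,2}_1 + 2 w^{b,1}=0$ (this is \eqref{eq_u_w_2} in Lemma \ref{lem_b1}). Integrating this from $z$ to $+\infty$ in the fast variable and using $u^{b,2}_1(x,z,t)\to 0$ as $z\to+\infty$ yields
\begin{equation*}
u^{b,2}_1(x,z,t)=2\int_z^{+\infty}w^{b,1}(x,s,t)\,\mathrm{d} s,
\end{equation*}
which is the first formula. The integrability in $s$ is guaranteed by Lemma \ref{regularity of wb1ub21} (in particular $\langle z\rangle^\ell w^{b,1}$ is controlled for all $\ell$), so the integral converges.

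Second, for $u^{b,2}_2$ I would use the matching of the divergence-free condition at order $\varepsilon^{1/2}$, i.e.\ \eqref{eq_divfree_b} with $j=1$, which reads $\partial_x u_1^{b,1}+\partial_z u_2^{b,2}=0$. Integrating from $z$ to $+\infty$ in the fast variable and using $u^{b,2}_2(x,z,t)\to 0$ at infinity gives
\begin{equation*}
u^{b,2}_2(x,z,t)=\int_z^{+\infty}\partial_x u_1^{b,1}(x,s,t)\,\mathrm{d} s,
\end{equation*}
which is the middle expression. Substituting the explicit formula $u^{b,1}_1=2\int_z^{+\infty}w^{b,0}(x,s,t)\,\mathrm{d} s$ from Corollary \ref{coro_u_b_1}, differentiating in $x$ under the integral sign, and applying Fubini once, one obtains the double-integral form $2\int_z^\infty\!\int_\tau^{+\infty}\partial_x w^{b,0}(x,s,t)\,\mathrm{d} s\,\mathrm{d}\tau$. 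Finally, evaluating both identities at $z=0$ and appealing to the matching condition \eqref{con1} at $i=2$ produces the two boundary relations for $u^{I,2}(x,0,t)$.

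I do not anticipate any real obstacle: the statement is essentially a bookkeeping consequence of (i) the ODE $\partial_z u^{b,2}_1=-2w^{b,1}$ already isolated in Lemma \ref{lem_b1}, (ii) the order-$\varepsilon^{1/2}$ divergence relation \eqref{eq_divfree_b}, and (iii) the far-field decay of the inner profiles. The only minor care needed is to justify the interchange of $\partial_x$ with $\int_z^\infty$ and the use of Fubini in the double integral; both are immediate from the $\langle z\rangle^\ell$-weighted regularity of $w^{b,0}$ supplied by Lemma \ref{regularity of wb0ub11}.
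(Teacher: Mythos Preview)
Your proposal is correct and follows essentially the same route as the paper: the corollary is stated there as an immediate consequence of \eqref{eq_u_w_2}, \eqref{eq_divfree_b} (with $j=1$), \eqref{eq_u_b_1_app}, and the matching condition \eqref{con1}, which is precisely the chain of identities you integrate. Your added remarks about convergence of the integrals and the interchange of $\partial_x$ with $\int_z^\infty$ are fine extra detail, though in the paper's logical order this corollary sits in the formal asymptotic derivation (Appendix~A) where the fast decay of the inner profiles is an a priori ansatz rather than something drawn from Lemmas~\ref{regularity of wb0ub11} and~\ref{regularity of wb1ub21}.
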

\begin{lemma}\label{lem_b2}
    The second order outer profiles $(u^{I,2},p^{I,2},w^{I,2})$  satisfies  problem $\eqref{I2}$.
    The second order inner profile $p^{b,2} = 0$. The first order inner profile $w^{b,2}$  satisfies problem \eqref{wb2}. 
\end{lemma}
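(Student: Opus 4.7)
The proof will mirror the inductive pattern already used for Lemmas \ref{lem_b0} and \ref{lem_b1}: we read off the outer equation directly by setting $j=2$ in the hierarchy \eqref{eq_IIII}, then extract the next two orders $\mathcal{F}^{1}=0$ and $\mathcal{G}^{2}=0$ from the boundary-layer expansion of \eqref{eq_u_I_b_s} and \eqref{eq_w_I_b_s} to obtain the equations for $p^{b,2}$ and $w^{b,2}$, respectively.

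First, setting $j=2$ in \eqref{eq_IIII} yields
\begin{align*}
    \partial_t u^{I,2} + u^{I,0}\cdot\nabla u^{I,2} + u^{I,2}\cdot\nabla u^{I,0} + \nabla p^{I,2}-\zeta\Delta u^{I,2} - 2\zeta\nabla^\perp w^{I,2} &= -u^{I,1}\cdot\nabla u^{I,1} + \Delta u^{I,0},\\
    \partial_t w^{I,2} + u^{I,0}\cdot\nabla w^{I,2} + u^{I,2}\cdot\nabla w^{I,0} + 4\zeta w^{I,2} - 2\zeta\nabla^\perp\!\cdot u^{I,2} &= -u^{I,1}\cdot\nabla w^{I,1} + \Delta w^{I,0},
\end{align*}
together with $\operatorname{div} u^{I,2}=0$, which exactly matches the interior equations in \eqref{I2} with source terms $f^{I,2}$, $g^{I,2}$. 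The initial condition $(u^{I,2},w^{I,2})|_{t=0}=0$ comes from \eqref{initial}, and the Dirichlet trace is exactly \eqref{eq_u_I_2_boundary} provided by Corollary \ref{coro_u_b_2}. This completes the first assertion.

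Next, for $p^{b,2}=0$, I would isolate the coefficient $\mathcal{F}^{1}(x,z,t)$ of $\varepsilon^{1/2}$ in the subtracted expansion (exactly the analogue of how $\mathcal{F}^{-1}$ and $\mathcal{F}^{0}$ were computed). Using Lemma \ref{lem_b0}, Lemma \ref{lem_b1}, Corollary \ref{coro_u_b_1}, Corollary \ref{coro_u_b_2}, $p^{b,0}=p^{b,1}=0$ and the identities $\partial_z u^{b,1}_1 + 2w^{b,0}=0$, $\partial_z u^{b,2}_1+2w^{b,1}=0$, the first component of $\mathcal{F}^{1}=0$ will yield the defining relation \eqref{eq_u_b3_1} for $u^{b,3}_1$ (modulo constants of integration fixed by decay as $z\to\infty$). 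The second component then reads
\begin{equation*}
    \partial_z p^{b,2} - \zeta\,\partial_z^2 u^{b,3}_2 + 2\zeta\,\partial_x w^{b,1} + \text{(terms}~\partial_z(\cdot)\text{)} = 0,
\end{equation*}
and integration from $z$ to $+\infty$, combined with the divergence relation $\partial_x u^{b,2}_1 + \partial_z u^{b,3}_2 = 0$ in \eqref{eq_divfree_b} and the identity $\partial_z u^{b,2}_1+2w^{b,1}=0$, produces an integrand of the form $\partial_x(\partial_z u^{b,2}_1 + 2w^{b,1})\equiv 0$, forcing $p^{b,2}=0$. This is precisely the structural cancellation that gave $p^{b,1}=0$ in \eqref{eq_p_b_1}, one order up.

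Finally, the equation for $w^{b,2}$ is obtained from $\mathcal{G}^{2}=0$. One expands \eqref{eq_w_I_b_s} around $y=0$ via Taylor (up to third order in $\sqrt{\varepsilon}z$ for $u^{I,0}_2$, second order for $u^{I,0}_1$ and $u^{I,1}$, and first order for $u^{I,2}$) and collects all products of inner and outer profiles contributing at order $\varepsilon^{1}$, subtracts the contribution already accounted for by $w^{I,2}$ via \eqref{eq_IIII} with $j=2$, and substitutes \eqref{eq_u_b_1_p}, \eqref{eq_u_p_b_2}, \eqref{eq_u_b3_1} for $u^{b,1}_1$, $u^{b,2}$, and the integral representation of $u^{b,3}$. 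The remaining terms are exactly $-g^{b,2}$ as stated. Combined with $w^{b,2}(x,0,t)=-w^{I,2}(x,0,t)$ from \eqref{con1} and the zero initial datum from \eqref{initial}, this gives problem \eqref{wb2}.

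The main obstacle is the bookkeeping in the last step: correctly grouping every product at order $\varepsilon^{1}$ in $\mathcal{G}^{2}$, especially the Hardy-type remainder pieces coming from Taylor expansion (the $z^k\partial_y^k$ terms in $\overline{\partial_y^k u^{I,j}}$), and reconciling the contribution of $u^{b,3}_1$ via the integral $2\int_z^\infty(\zeta\partial_x^2 u^{b,1}_1-\partial_t u^{b,1}_1)\,\mathrm{d}z$ that appears in \eqref{eq_u_b3_1}. Once this algebraic collection is done, the derivation of \eqref{I2}, $p^{b,2}=0$, and \eqref{wb2} is purely mechanical and parallels Steps 2--3 verbatim.
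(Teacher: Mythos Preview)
Your proposal is correct and follows essentially the same approach as the paper: set $j=2$ in \eqref{eq_IIII} for the outer system, read off $p^{b,2}=0$ from the second component of $\mathcal{F}^{1}=0$ via the cancellation $\partial_x(\partial_z u^{b,2}_1+2w^{b,1})=0$, and extract \eqref{wb2} from $\mathcal{G}^{2}=0$. One bookkeeping point you did not make explicit but which the paper does: the raw $\mathcal{G}^{2}$ contains the term $(\overline{u^{I,3}_2}+u^{b,3}_2)\partial_z w^{b,0}$, and to close the equation without invoking the as-yet-undetermined $u^{I,3}$ you must use the matching condition \eqref{con1} at the next order, $\overline{u^{I,3}_2}=-u^{b,3}_2(x,0,t)$, together with the explicit formula $u^{b,3}_2=2\int_z^\infty\!\int_\tau^\infty\partial_x w^{b,1}\,\mathrm{d}s\,\mathrm{d}\tau$ from \eqref{eq_divfree_b}, yielding the expression $-2\int_0^z\!\int_\tau^\infty\partial_x w^{b,1}\,\mathrm{d}s\,\mathrm{d}\tau\cdot\partial_z w^{b,0}$ that appears in $g^{b,2}$.
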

\begin{proof}
Taking $j = 2$ in \eqref{eq_IIII}, we find $(u^{I,2},p^{I,2},w^{I,2})$  satisfies 
\begin{equation*}
    \begin{cases}
        \pp_t  u^{I,2}+\left( u^{I,2}\cdot\nabla\right)u^{I,0}+\left( u^{I,1}\cdot\nabla\right)u^{I,1}+\left( u^{I,0}\cdot\nabla\right)u^{I,2}+\nabla p^{I,2} -\zeta\D u^{I,2}-2\zeta\nabla^\perp w^{I,2}=\D u^{I,0},\\ 
        \pp_t  w^{I,2}+\left( u^{I,2}\cdot\nabla\right)w^{I,0}+\left( u^{I,1}\cdot\nabla\right)w^{I,1}+\left( u^{I,0}\cdot\nabla\right)w^{I,2}+4\zeta w^{I,2}+2\zeta\nabla^\perp\cdot u^{I,2}= \D w^{I,0},\\
        \di u^{I,2}=0,
    \end{cases}
\end{equation*}
which together with \eqref{initial}, \eqref{con1} and \eqref{eq_u_I_2_boundary} implies that $(u^{I,2},p^{I,2},w^{I,2})$  satisfies the linear problem \eqref{I2}. 
Next, from $\mathcal{F}^{1} = 0$, Lemmas \ref{lem_b0}, \ref{lem_b1}, \ref{lem_b2}, Corollary \ref{coro_u_b_1} and \ref{coro_u_b_2}, we have 
\begin{equation}\label{eq_u_b_1_b_3}
\begin{pmatrix}
   \partial_t u^{b,1}_1\\
   0
\end{pmatrix}   + 
\begin{pmatrix}
   0\\
   \pp_zp^{b,2}
\end{pmatrix}  
    -\zeta\begin{pmatrix}
        \partial_x^2u_1^{b,1}+\partial_z^2u^{b,3}_1\\
         \partial_z^2u^{b,3}_2
    \end{pmatrix}
    - \begin{pmatrix}
        \partial_z^2u^{b,1}_1\\
         0
    \end{pmatrix}
    -\begin{pmatrix}
        2\zeta\partial_z w^{b,2}\\
        -2\zeta\partial_x w^{b,1}
    \end{pmatrix}
    =0.
\end{equation}
Then, from \eqref{eq_u_b_1_b_3}$_1$, we have  
\begin{equation}\label{eq_u_w_3}
    \partial_z u^{b,3}_1 + 2w^{b,2} = \frac{1}{\zeta}\int_z^\infty\left( \zeta \partial_x^2u_1^{b,1} +  \partial_z^2u_1^{b,1} -  \partial_t u^{b,1}_1\right)\mathrm{d}z = \frac{1}{\zeta}\int_z^\infty\left( \zeta \partial_x^2u_1^{b,1}  -  \partial_t u^{b,1}_1\right)\mathrm{d}z -\frac{1}{\zeta} \partial_z  u_1^{b,1}.
\end{equation}
Moreover, using the fact 
\begin{equation*}
    \partial_x u_1^{b,2} +  \partial_z u_2^{b,3}  = 0,
\end{equation*}
 and \eqref{eq_u_w_2}, we have from \eqref{eq_u_b_1_b_3}$_2$ that 
\begin{equation}\label{eq_p_b_2}
    p^{b,2} = - \zeta\int_z^{\infty} \big(\partial_z^2u^{b,3}_2 - 2\partial_x w^{b,1} \big) \mathrm{d}z = \zeta\int_z^{\infty} \partial_x\big(\partial_z u^{b,2}_1 + 2 w^{b,1} \big) \mathrm{d}z = 0. 
\end{equation}
Moreover, from $\mathcal{G}^{2} = 0$, Lemmas \ref{lem_b0}, \ref{lem_b1}, \ref{lem_b2}, Corollary \ref{coro_u_b_1}, \ref{coro_u_b_2}  and \eqref{eq_u_w_2}, we have 
\begin{equation}\label{eq_w_b_2_orign}
    \begin{split}
        &\pp_t  w^{b,2}+\overline{u^{I,2}_1}\pp_x w^{b,0}+u^{b,2}_1\overline{ \pp_xw^{I,0}}+u^{b,2}_1\pp_xw^{b,0} +\overline{u^{I,1}_1}\pp_x w^{b,1}+u^{b,1}_1\overline{ \pp_xw^{I,1}}+u^{b,1}_1\pp_xw^{b,1}\\
&+u^{b,2}_2\overline{\pp_yw^{I,0}}+\left(\overline{u^{I,3}_2}+u^{b,3}_2 \right)\pp_zw^{b,0} +\left(\overline{u^{I,2}_2}+u^{b,2}_2 \right)\pp_zw^{b,1}+4\zeta w^{b,2}+2\zeta\left( \pp_zu^{b,3}_1-\pp_xu^{b,2}_2\right)\\
& -\pp_x^2w^{b,0}-\pp_z^2w^{b,2}+\overline{\pp_yu^{I,1}_1}z\pp_xw^{b,0}+u^{b,1}_1\overline{\pp_y\pp_xw^{I,0}}z+\overline{\pp_yu^{I,0}_1}z\pp_xw^{b,1}+\overline{\pp_yu^{I,2}_2}z\pp_zw^{b,0}\\
&+\overline{\pp_yu^{I,1}_2}z\pp_zw^{b,1}
+\frac{1}{6}\overline{\pp_y^3u^{I,0}_2}z^3\pp_zw^{b,0}+\frac{1}{2}\overline{\pp_y^2u^{I,0}_1}z^2\pp_xw^{b,0}+\frac{1}{2}\overline{\pp_y^2u^{I,0}_2}z^2\pp_zw^{b,1}+\frac{1}{2}\overline{\pp_y^2u^{I,1}_2}z^2\pp_zw^{b,0} =0.
    \end{split}
\end{equation}
Using \eqref{eq_divfree_b} and \eqref{eq_u_b_2}, we have 
\begin{equation}\label{eq_u_b_3_2}
    u_2^{b,3} = \int_z^{\infty}\partial_x u_1^{b,2}\mathrm{d}z = 2\int_z^{\infty}\int_\tau^{\infty} \partial_xw^{b,1}(x,s,t)\mathrm{d}s\mathrm{d}z,
\end{equation}
which together with \eqref{con1} implies 
\begin{equation}\label{eq_u_2_I_3_boun}
    \begin{split}
        \overline{u^{I,3}_2} +u_2^{b,3} =\,& -
    2\int_0^{\infty}\int_\tau^{\infty} \partial_xw^{b,1}(x,s,t)\mathrm{d}s\mathrm{d}z 
    +2\int_z^{\infty}\int_\tau^{\infty} \partial_xw^{b,1}(x,s,t)\mathrm{d}s\mathrm{d}z\\ 
    =\,& -
    2\int_0^{z}\int_\tau^{\infty} \partial_xw^{b,1}(x,s,t)\mathrm{d}s\mathrm{d}z. 
    \end{split}
\end{equation}
Combining  \eqref{initial}, \eqref{con1}, \eqref{eq_w_b_2_orign}, \eqref{eq_u_b_3_2} and \eqref{eq_u_2_I_3_boun}, we find that  $w^{b,2}$  satisfies problem \eqref{wb2}. The proof is complete.
\end{proof}
\noindent {\bfseries Step 5. Some higher order profiles.}
\begin{lemma}\label{lem_b_higher}
    The   inner profiles $u^{b,3}$  and $u^{b,4}_2$  satisfy    
    \begin{gather}\label{A_eq_u_b3_1}
        u^{b,3}_1 = 2\int_z^{\infty}w^{b,2} \mathrm{d}z - \frac{1}{\zeta} u_1^{b,1} - \frac{1}{\zeta}\int_z^{\infty}  \int_\tau^\infty\left( \zeta \partial_x^2u_1^{b,1}  -  \partial_t u^{b,1}_1\right)\mathrm{d}\tau \mathrm{d}z,\\ 
        u^{b,3}_2 =  2\int_z^\infty \int_\tau^{+\infty} \partial_xw^{b,1}(x,s,t)\mathrm{d}s\mathrm{d}\tau, \label{A_eq_u_b3_2}
    \end{gather}
    and 
\begin{equation}\label{A_eq_u_b4_2} 
           u^{b,4}_2 = 2\int_z^{\infty}\int_\tau^\infty \partial_x w^{b,2}\mathrm{d}\tau \mathrm{d}z - \frac{1}{\zeta}\int_z^{\infty} \partial_x u_1^{b,1}\mathrm{d}z - \frac{1}{\zeta}\int_z^{\infty}  \int_\tau^\infty \int_s^\infty\left( \zeta \partial_x^3u_1^{b,1}  -  \partial_t\partial_x u^{b,1}_1\right)\mathrm{d}s\mathrm{d}\tau \mathrm{d}z.  
\end{equation}
\end{lemma}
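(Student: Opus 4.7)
The plan is to derive the three formulas directly from the divergence-free identities \eqref{eq_divfree_b} and the second-order relation \eqref{eq_u_w_3} obtained when analyzing $\mathcal{F}^{1} = 0$ in the proof of Lemma \ref{lem_b2}. Throughout, I will use the standing decay assumption $u^{b,j}(x,z,t) \to 0$ as $z \to \infty$, which is built into the Prandtl ansatz at the start of Section \ref{Asymptotic analysis}.

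First I would prove \eqref{A_eq_u_b3_1}. Rewriting \eqref{eq_u_w_3} as
\begin{equation*}
\partial_z u^{b,3}_1(x,z,t) = -2 w^{b,2}(x,z,t) + \frac{1}{\zeta}\int_z^\infty\!\!\left( \zeta \partial_x^2 u_1^{b,1} - \partial_t u^{b,1}_1\right)\!\mathrm{d}\tau - \frac{1}{\zeta}\partial_z u_1^{b,1},
\end{equation*}
and integrating from $z$ to $+\infty$, the left-hand side produces $-u^{b,3}_1(x,z,t)$ by decay, while the rightmost term yields $-\tfrac{1}{\zeta}(0 - u_1^{b,1}(x,z,t))$. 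Rearranging gives \eqref{A_eq_u_b3_1} at once. The needed regularity and decay of $u_1^{b,1}$ and $w^{b,2}$ (so that all iterated integrals converge) are supplied by Lemmas \ref{regularity of wb0ub11} and \ref{regularity_of_wb2ub31}.

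Next I would obtain \eqref{A_eq_u_b3_2} by specializing the divergence-free identity \eqref{eq_divfree_b} to $j=2$, i.e. $\partial_x u_1^{b,2} + \partial_z u_2^{b,3} = 0$. Integrating in $z$ and using decay at infinity yields $u_2^{b,3}(x,z,t) = \int_z^\infty \partial_x u_1^{b,2}(x,\tau,t)\,\mathrm{d}\tau$. Substituting the representation of $u_1^{b,2}$ from \eqref{eq_u_p_b_2} and pulling $\partial_x$ through the integral produces \eqref{A_eq_u_b3_2}. Finally, for \eqref{A_eq_u_b4_2}, I apply \eqref{eq_divfree_b} with $j=3$, i.e. $\partial_x u_1^{b,3} + \partial_z u_2^{b,4} = 0$, so that $u_2^{b,4}(x,z,t) = \int_z^\infty \partial_x u_1^{b,3}(x,\tau,t)\,\mathrm{d}\tau$. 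Differentiating the already-established formula \eqref{A_eq_u_b3_1} in $x$ and then integrating in $z$ once more gives the three-fold iterated integral in \eqref{A_eq_u_b4_2}.

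No step is genuinely hard; the arguments are a sequence of integrations against the decay at $z=\infty$. The only point that requires mild care is verifying that the iterated integrals are well defined and that one may interchange $\partial_x$ with the $z$-integrals, which in turn requires polynomial-weighted decay of $w^{b,0},w^{b,1},w^{b,2}$ and $u_1^{b,1}$ in $z$ together with sufficient $x$-regularity. This is exactly what the weighted bounds in Lemmas \ref{regularity of wb0ub11}, \ref{regularity of wb1ub21} and \ref{regularity_of_wb2ub31} provide; dominated convergence then justifies all the manipulations and completes the proof.
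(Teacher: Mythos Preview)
Your proposal is correct and follows essentially the same route as the paper: integrate \eqref{eq_u_w_3} from $z$ to $\infty$ for \eqref{A_eq_u_b3_1}, use the divergence-free relation \eqref{eq_divfree_b} with $j=2$ together with \eqref{eq_u_p_b_2} for \eqref{A_eq_u_b3_2}, and then apply \eqref{eq_divfree_b} with $j=3$ to the formula just obtained for $u_1^{b,3}$ to get \eqref{A_eq_u_b4_2}. The only difference is cosmetic ordering and your added remark on weighted decay justifying the iterated integrals, which the paper leaves implicit since Appendix~\ref{Appendix A} is presented as a formal derivation.
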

\begin{proof} Using \ref{eq_divfree_b} and Corollary \eqref{coro_u_b_2}, we have 
\begin{equation}\label{eq_u_b3_22}
   u^{b,3}_2 = \int_z^{+\infty} \partial_xu_1^{b,2}(x,s,t)\mathrm{d}s = 2\int_z^\infty \int_\tau^{+\infty} \partial_xw^{b,1}(x,s,t)\mathrm{d}s\mathrm{d}\tau. 
\end{equation}
From \eqref{eq_u_w_3}, we have 
    \begin{equation}\label{eq_u_b3_11}
         \begin{split}
             u^{b,3}_1  =\,& - \int_z^{\infty}\left[ \frac{1}{\zeta}\int_\tau^\infty\left( \zeta \partial_x^2u_1^{b,1}  -  \partial_t u^{b,1}_1\right)\mathrm{d}\tau -\frac{1}{\zeta} \partial_z  u_1^{b,1} -2w^{b,2} \right]\mathrm{d}z\\ 
             =\,&  2\int_z^{\infty}w^{b,2} \mathrm{d}z - \frac{1}{\zeta} u_1^{b,1} - \frac{1}{\zeta}\int_z^{\infty}  \int_\tau^\infty\left( \zeta \partial_x^2u_1^{b,1}  -  \partial_t u^{b,1}_1\right)\mathrm{d}\tau \mathrm{d}z, 
         \end{split}
    \end{equation}
    which,  together with \ref{eq_divfree_b}, implies that 
    \begin{equation}\label{eq_u_b4_22}
       \begin{split}
           u^{b,4}_2 =\,& \int_z^{+\infty} \partial_xu_1^{b,3}(x,s,t)\mathrm{d}s\\ 
            =\,& 2\int_z^{\infty}\int_\tau^\infty \partial_x w^{b,2}\mathrm{d}\tau \mathrm{d}z - \frac{1}{\zeta}\int_z^{\infty} \partial_x u_1^{b,1}\mathrm{d}z - \frac{1}{\zeta}\int_z^{\infty}  \int_\tau^\infty \int_s^\infty\left( \zeta \partial_x^3u_1^{b,1}  -  \partial_t\partial_x u^{b,1}_1\right)\mathrm{d}s\mathrm{d}\tau \mathrm{d}z. 
       \end{split}
\end{equation}
From \eqref{eq_u_b3_22}--\eqref{eq_u_b4_22}, one can complete the proof.
\end{proof}
\section{Expressions of some source terms.}\label{Appendix B}
In this section, we present
the complete expressions of some source terms in $\eqref{error equations}$, i.e.,

\begin{align}\label{source term F}
    \notag    -F=~&\e\pp_t  u^{b,2}+\e^\frac{3}{2}\pp_t  u^{b,3}+ \e^\frac{3}{2}\pp_t  S+\e^2\pp_t   (0,u^{b,4}_2)^\top\\ \notag
    &+\e^\frac{1}{2}u^{I,0}_1\pp_xu^{b,1}+\left(u^{I,0}_2-\e^\frac{1}{2}\overline{\pp_y u^{I,0}_2}z\right)\pp_z u^{b,1}\\ \notag
    &+\e u^{I,0}_1\pp_x u^{b,2}+\e^\frac{1}{2}u^{I,0}_2\pp_z u^{b,2}+\e^\frac{3}{2}u^{I,0}\cdot\nabla\left(u^{b,3}+S+\e^\frac{1}{2} (0,u^{b,4}_2)^\top\right)\\ \notag
    &+\e^\frac{1}{2}u^{I,1}\cdot\nabla\left(\e u^{I,2}+\e u^{b,2}+\e^\frac{3}{2}u^{b,3}+\e^\frac{3}{2}S+\e^2  (0,u^{b,4}_2)^\top\right)\\
    &+\e^\frac{1}{2}u^{b,1}_1\left(\pp_xu^{I,0}-\overline{\pp_xu^{I,0}}\right)+\e^\frac{1}{2}u^{b,1}_1\pp_x\left(u^a-u^{I,0}\right)\\ \notag
    &+\left(\e u^{b,2}+\e^\frac{3}{2}u^{b,3}+\e^\frac{3}{2}S+\e^2 (0,u^{b,4}_2)^\top\right)\cdot\nabla u^a+\e\left(\begin{matrix}
        \pp_x p^{b,2} \\ 
        0
    \end{matrix}\right)\\ \notag
    &-\e\left(\e^\frac{1}{2}\Delta u^{I,1}+\e^\frac{1}{2}\pp_x^2 u^{b,1}+\e\Delta u^{I,2}+\e\Delta u^{b,2}+\e^\frac{3}{2}\Delta u^{b,3}+\e^\frac{3}{2}\Delta S+\e^2\Delta  (0,u^{b,4}_2)^\top \right)\\ \notag
    &-\zeta\left(\e\pp_x^2 u^{b,2}+\e^\frac{3}{2}\pp_x^2 u^{b,3}+\e^\frac{3}{2}\Delta S+\e^2 \Delta (0,u^{b,4}_2)^\top \right)
    +2\zeta\e\left(\begin{matrix}
        0 \\ 
        \pp_xw^{b,2}
    \end{matrix}\right) \notag\\ \notag
   =&-\sum_{i=1}^{8}F_i. 
\end{align}
Finally, for the components of $G$, we have
\begin{align}\label{source term G}
    \notag -G=&\left( u^{I,0}_1-\overline{ \pp_yu^{I,0}_1}\e^\frac{1}{2} z-\frac{1}{2}\overline{ \pp_y^2 u^{I,0}_1}\e z^2\right)\pp_xw^{b,0}\\ \notag
    &+\left( u^{I,0}_2-\overline{\pp_yu^{I,0}_2}\e^\frac{1}{2}z-\frac{1}{2}\overline{\pp_y^2u^{I,0}_2}\e z^2-\frac{1}{6}\overline{\pp_y^3u^{I,0}_2}\e^\frac{3}{2}z^3\right)\e^{-\frac{1}{2}}\pp_z w^{b,0}\\ \notag
    &+\e^\frac{1}{2}\left(u^{I,0}_1-\overline{\pp_yu^{I,0}_1}\e^\frac{1}{2}z \right)\pp_x w^{b,1}+\left( u^{I,0}_2-\overline{\pp_yu^{I,0}_2}\e^\frac{1}{2}z-\frac{1}{2}\overline{\pp_y^2u^{I,0}_2}\e z^2\right)\pp_zw^{b,1}\\ \notag
    &+\e u^{I,0}_1\pp_xw^{b,2}+\e^\frac{1}{2}\left( u^{I,0}_2-\overline{\pp_yu^{I,0}_2}\e^\frac{1}{2}z \right)\pp_zw^{b,2}+\e^\frac{1}{2} \left( u^{I,1}_1-\overline{ u^{I,1}_1}-\overline{ \pp_yu^{I,1}_1}\e^\frac{1}{2}z \right)\pp_xw^{b,0}\\ \notag
    &+\left(u^{I,1}_2-\overline{ \pp_yu^{I,1}_2}\e^\frac{1}{2}z+\frac{1}{2}\overline{ \pp_y^2u^{I,1}_2}\e z^2 \right)\pp_zw^{b,0}+\e\left( u^{I,1}_1-\overline{u^{I,1}_1}\right)\pp_xw^{b,1}\\ \notag
    &+\e^\frac{1}{2}\left(u^{I,1}_2-\overline{\pp_yu^{I,1}_2}\e^\frac{1}{2}z \right)\pp_zw^{b,1}+\e^\frac{3}{2}u^{I,1}_1\pp_x w^{b,2}+\e u^{I,1}_2\pp_z w^{b,2}\\ \notag
    &+\e^\frac{1}{2}u^{b,1}_1\left( \pp_x w^{I,0}-\overline{ \pp_x w^{I,0}}-\e^\frac{1}{2}\overline{ \pp_y \pp_x w^{I,0}}z\right)+\e^\frac{1}{2}u^{b,1}_1\pp_x\left(\e w^{I,2}+\e w^{b,2}\right)\\ 
    &+\e\left(u^{I,2}_1-\overline{u^{I,2}_1} \right)\pp_xw^{b,0}+\e^\frac{1}{2}\left(u^{I,2}_2-\overline{u^{I,2}_2}-\e^\frac{1}{2}\overline{\pp_yu^{I,2}_2}z \right)\pp_zw^{b,0}+\e^\frac{3}{2} u^{I,2}_1\pp_x w^{b,1}\\ \notag
    &+\e\left(u^{I,2}_2-\overline{ u^{I,2}_2} \right)\pp_zw^{b,1}+ \e u^{I,2}\cdot\nabla\left(\e^\frac{1}{2}w^{I,1}+\e w^{I,2}+\e w^{b,2}\right)\\ \notag
    &+\e u^{b,2}_1\left(\pp_x w^{I,0}-\overline{ \pp_x w^{I,0}} \right)+\e u^{b,2}_2\left(\pp_y w^{I,0}-\overline{ \pp_y w^{I,0}} \right)\\ \notag
    &+\e^\frac{3}{2}u^{b,2}_1\pp_xw^{b,1}+\e u^{b,2}\cdot\nabla \left(\e^\frac{1}{2}w^{I,1}+\e w^{I,2} +\e w^{b,2} \right)+\e^\frac{3}{2}u^{b,3}_1\pp_xw^{b,0}\\ \notag
    &+\e^\frac{3}{2}u^{b,3}\cdot\nabla\left(w^a-w^{b,0}\right)+\left(\e^\frac{3}{2}S+\e^2(0,u^{b,4}_2)^\top\right)\cdot\nabla w^a-\e u^{b,3}_2(x,0,t)\pp_zw^{b,0} \\ \notag
    &-\e \left(  \e^\frac{1}{2}\Delta w^{I,1}+\e^\frac{1}{2}\pp_x^2 w^{b,1}+\e\D w^{I,2}+\e\pp_x^2 w^{b,2}\right)\\ \notag
    &-2\zeta\left(\e^\frac{3}{2}\pp_xu^{b,3}_2-\e^\frac{3}{2}\nabla^\perp\cdot S+\e^2\partial_x u_2^{b,4}\right) \notag\\ \notag
   =&-\sum_{i=1}^{14}G_i. 
\end{align}
 

\section*{Acknowledgement}
The authors would like to thank Professor Huanyao Wen for insightful discussions on the boundary layer theory of complex fluids.
The work of Y. H. Wang was partially supported by the National Natural Science Foundation of China grant 12401274 and the Natural Science Foundation of Hunan Province grant 2024JJ6302. 
\section*{Data availability statement}
\noindent No new data were created or analysed in this study. 
\section*{Conflict of interest} 
\noindent The authors declare that they have no conflict of interest.

\end{document}